\documentclass[12pt]{article}

\usepackage[english]{babel}
\usepackage[utf8]{inputenc}
\usepackage[LGR, T1]{fontenc} 
\usepackage{lmodern}
\usepackage[a4paper, margin=2cm]{geometry}
\usepackage[onehalfspacing]{setspace}
\usepackage{microtype}
\usepackage{graphicx}
\graphicspath{{figures/}}
\usepackage{booktabs}
\usepackage{tabularx}
\usepackage{longtable}
\usepackage{array}
\usepackage{caption}
\usepackage{xcolor}
\usepackage{colortbl}
\definecolor{besttime}{RGB}{198, 239, 206} 
\definecolor{bestmem}{RGB}{221, 235, 247}  

\usepackage{mathtools}
\usepackage{amssymb}
\usepackage{amsthm}
\newtheorem{theorem}{Theorem}[section]
\newtheorem{lemma}[theorem]{Lemma}

\newtheorem{proposition}[theorem]{Proposition}
\newtheorem{definition}{Definition}[section]

\DeclareMathOperator{\dom}{dom}
\newcommand{\sgap}{\ensuremath{\varrho}}   
\newcommand{\ascore}{\ensuremath{\Lambda}} 
\newcommand{\tlim}{\ensuremath{\Gamma}} 
\newcommand{\bbt}{\ensuremath{\mathit{BBT}}} 

\usepackage{algorithm}
\usepackage{algpseudocode}

\usepackage{tikz}
\usetikzlibrary{arrows.meta}
\usetikzlibrary{calc}

\usepackage{csquotes}
\usepackage[
  backend=biber,
  style=alphabetic,
  sorting=none,
  language=american,
  maxcitenames=1,
  mincitenames=1
]{biblatex}
\newcommand{\keywords}[1]{\textbf{Keywords}: \itshape #1 \normalfont}

\DeclareRobustCommand{\BoldSlashedO}{%
  \tikz[baseline=(char.base)]{
    \node[inner sep=0pt] (char) {O};
    \begin{scope}
    \clip (char.south west) rectangle (char.north east);
    \draw[line width=.55pt]
         ($(char.north west)+(4pt,-3pt)$) -- ($(char.south east)+(-4pt,3pt)$);
    \end{scope}
}}

\DeclareRobustCommand{\SlashedO}{%
  \tikz[baseline=(char.base)]{
    \node[inner sep=0pt] (char) {O};
    \begin{scope}
    \clip (char.south west) rectangle (char.north east);
    \draw[line width=.55pt]
         ($(char.north west)+(2.1pt,-1pt)$) -- ($(char.south east)+(-2.1pt,1pt)$);
    \end{scope}
}}

\DeclareRobustCommand{\KAYROS}{\textbf{KAYR\BoldSlashedO S}}
\DeclareRobustCommand{\kayros}{KAYR\SlashedO S}
\newcommand{\textgreek}[1]{\begingroup\fontencoding{LGR}\selectfont#1\endgroup}

\title{\Large \KAYROS: An Anytime and Exact Solver for the Time-Dependent Vehicle Routing Problem with Time Windows --- Extended Version}
\author{Florian Rascoussier\thanks{IMT Atlantique, Lab-STICC, CNRS, UMR 6285 (équipe DECIDE) and INSA Lyon, Inria, CITI, UR3720, 69621 Villeurbanne, France (équipe EMERAUDE); ORCID identifier: \href{https://orcid.org/0009-0005-3253-9814}{0009-0005-3253-9814}\@.}}
\date{Version 2, 29/09/2026}

\usepackage[
    colorlinks=true,
    linkcolor=blue,
    citecolor=blue,
    urlcolor=blue
]{hyperref}
\hypersetup{
    pdftitle={KAYROS: An Anytime and Exact Solver for the Time-Dependent Vehicle Routing Problem with Time Windows --- Extended Version},
    pdfauthor={Florian Rascoussier},
    pdfsubject={Time-dependent vehicle routing, anytime and exact optimization},
    pdfkeywords={vehicle routing, time-dependent travel times, anytime algorithms, branch-price-and-cut, benchmarking}
}

\begin{document}

\maketitle

\begin{abstract}
    The Time-Dependent Vehicle Routing Problem with Time Windows (TDVRPTW) captures a central difficulty of urban logistics: travel times vary with the time of day, so the cost of a route depends on when it is driven. This paper introduces \kayros, an open-source solver for the duration-minimization TDVRPTW that is both anytime and exact. It returns improving valid solutions throughout its time budget and can close instances with conditional computational optimality certificates through an integrated branch-price-and-cut component. We extend the composition of continuous arrival-time functions from the literature, previously described only at proof or proposition level, to the left-continuous functions that stepwise benchmarks require. We prove that the composition routine shipped in the checker computes that operation exactly. We analyze when its evaluation and breakpoint normalization are exact in IEEE-754 double-precision arithmetic, and argue that a canonical, epsilon-free checker must define the objective. The anytime layer combines greedy construction, granular time-dependent local search over balanced route trees, and iterated local search, with a fleet-aware descent for a fleet-cost objective. We further propose an anytime evaluation methodology based on a normalized signed primal integral under a pre-committed statistical design. On 212 instances from five families at a one-hour single-threaded budget, \kayros{} achieves a pooled anytime score 63.3\% lower than the strongest of three available contenders (Timefold, Hexaly, jsprit). All three contrasts are significant under Holm correction. \kayros{} improves all 30 high-effort references of Blauth et al. and 10 literature best-known solutions, and publishes 704 such certificates. The solver, benchmarks and campaign data are openly released.

    \vspace{2ex}

    \keywords{
        Routing,
        Time-dependent travel times,
        Anytime optimization,
        Branch-price-and-cut,
        Iterated local search
    }
\end{abstract}

\setcounter{tocdepth}{2}
\tableofcontents
\clearpage

\section*{Glossary}
\addcontentsline{toc}{section}{Glossary}

\begingroup
\setstretch{1}
\begin{longtable}{@{}p{0.21\textwidth}@{}p{0.79\textwidth}@{}}
ACO & Ant Colony Optimization \\
API & Application Programming Interface \\
ATF & Arrival Time Function \\
B\&P & Branch \& Price \\
BBT & Balanced Binary Tree \\
BKS & Best-Known Solution \\
CBG & Customer-Based Graph \\
CVRPTW & Capacitated Vehicle Routing Problem with Time Windows \\
DDD & Dynamic Discretization Discovery \\
DIMACS & Center for Discrete Mathematics and Theoretical Computer Science \\
DM-TDVRPTW & Duration-Minimization Time-Dependent Vehicle Routing Problem with Time Windows \\
DP & Dynamic Programming \\
ESPPRC & Elementary Shortest Path Problem with Resource Constraints \\
FIFO & First-In First-Out \\
GA & Genetic Algorithm \\
HGS & Hybrid Genetic Search \\
IGP & Ichoua--Gendreau--Potvin (speed model) \\
ILP & Integer Linear Programming \\
ILS & Iterated Local Search \\
LCA-BST & Lowest-Common-Ancestor Binary Search Tree \\
LNS & Large Neighborhood Search \\
LP & Linear Programming \\
LS & Local Search \\
MAMUT & Machine learning And Matheuristics algorithms for Urban Transportation \\
MILP & Mixed Integer Linear Programming \\
NDCPWLF & Non-Decreasing Continuous PieceWise Linear Function \\
NDLCF & Non-Decreasing Left-Continuous PieceWise Linear Function (short for NDLCPWLF preferentially used to avoid confusion) \\
PP & Pricing Problem \\
PWL & PieceWise Linear \\
RDF & Route Duration Function \\
RNG & Road-Network Graph \\
RTF & Ready Time Function \\
SA & Simulated Annealing \\
SINTEF & Foundation for Industrial and Technical Research \\
TD & Time-Dependent \\
TDESPPRC & Time-Dependent Elementary Shortest Path Problem with Resource Constraints \\
TDSPP & Time-Dependent Shortest Path Problem \\
TDTSPTW & Time-Dependent Traveling Salesman Problem with Time Windows \\
TDVRPTW & Time-Dependent Vehicle Routing Problem with Time Windows \\
TS & Tabu Search \\
TTF & Travel Time Function \\
TW & Time Window \\
VRPTW & Vehicle Routing Problem with Time Windows \\
\end{longtable}
\endgroup

\section*{Table of Symbols}
\addcontentsline{toc}{section}{Table of Symbols}

The notation follows the global system of the thesis this paper belongs to: calligraphic letters are input sets, bold lowercase letters are vertex sequences, plain lowercase letters are scalars and set elements. The list is grouped by theme in definition order, and every symbol is also introduced with its domain at first use in the text.

\begingroup
\setstretch{1}
\begin{longtable}{@{}p{0.30\textwidth}@{}p{0.70\textwidth}@{}}
\multicolumn{2}{@{}l}{\textbf{Graph and instance data}} \\
\addlinespace[0.5ex]
$\mathcal{G} = (\mathcal{V}, \mathcal{A})$ & the directed customer-based graph of an instance \\
$\mathcal{V} = \mathcal{C} \cup \{o, d\}$ & the vertex set: the customers, plus the duplicated mono-depot as an origin $o$ and a destination $d$ \\
$\mathcal{C}$ & the customer set: the vertices to be served, i.e.\ every vertex of $\mathcal{V}$ but the two depot copies \\
$\mathcal{A} \subset \mathcal{V}^2$ & the arc set, excluding loops, arcs leaving $d$ and arcs entering $o$ \\
$n = \left| \mathcal{C} \right|$ & the number of customers, $n \in \mathbb{N}_{>0}$, so that $\left| \mathcal{V} \right| = n + 2$ \\
$K$ & the fleet-size bound, $K \in \mathbb{N}_{>0}$: the maximum number of routes a solution may use \\
$Q$ & the homogeneous vehicle capacity as a scalar, $Q \in \mathbb{R}_{>0}$ \\
$q_v$, $s_v$ & the demand and the service time duration of a vertex $v$, $q_v, s_v \in \mathbb{R}_{\geq 0}$, both zero at the depot \\
$[e_v, l_v] \subseteq \mathcal{T}$ & the time window of a vertex $v$, with $e_v \leq l_v$ the earliest and latest start of service \\
$\mathcal{T}_{\mathrm{ext}}$, $\mathcal{T}$, $\mathcal{T}_{\mathrm{arc}}$ & the time-set family: the ambient scheduling timestamps $\mathcal{T}_{\mathrm{ext}} = \mathbb{R}_{\geq 0}$, the finite planning horizon $\mathcal{T} = [0, T]$, and the arc-departure domain $\mathcal{T}_{\mathrm{arc}} = [a_{\mathrm{arc}}, T_{\mathrm{arc}}] \subseteq \mathcal{T}$ on which every arc function is materialized \\
$p$ & the instance-wide bound on the number of breakpoints of an arc travel time function, $p \in \mathbb{N}_{>0}$ \\
$c_{\mathrm{fleet}}$ & the fixed per-route cost of the FleetCostDuration objective, $c_{\mathrm{fleet}} \in \mathbb{R}_{\geq 0}$ \\
\addlinespace[1.5ex]
\multicolumn{2}{@{}l}{\textbf{Paths, routes and solutions}} \\
\addlinespace[0.5ex]
$\mathbf{p} = \langle v_1, \dots, v_m \rangle$ & a path: an elementary ordered sequence of vertices \\
$m$ & the size of a path or a route, i.e.\ its number of vertices, $m \in \mathbb{N}_{>0}$ \\
$\mathcal{P}_{\mathcal{V}}$ & the path set: all finite non-empty ordered sequences of vertices \\
$\mathbf{r} = \langle o, \mathbf{r}_{\mathcal{C}}, d \rangle$ & a route: a path from the origin depot to the destination depot, serving at least one customer \\
$\mathbf{r}_{\mathcal{C}}$ & the inner customer sequence of a route, depots excluded \\
$\Omega$ & the set of all feasible routes of an instance, $\Omega \subset \mathcal{P}_{\mathcal{V}}$ \\
$\mathcal{S} \subseteq \Omega$ & a solution: feasible routes whose customer sequences partition $\mathcal{C}$ \\
$\left| \mathcal{S} \right| \leq K$ & the fleet size of a solution, i.e.\ its number of routes \\
$c_{\mathbf{r}}$ & the cost of a route in the set-partitioning master problem: $\Delta_{\mathbf{r}}^{*}$ under \textit{Duration}, $c_{\mathrm{fleet}} + \Delta_{\mathbf{r}}^{*}$ under \textit{FleetCostDuration} \\
$\lambda_{\mathbf{r}}$ & the route-selection variable of the set-partitioning formulation \\
\addlinespace[1.5ex]
\multicolumn{2}{@{}l}{\textbf{Objectives and anytime evaluation}} \\
\addlinespace[0.5ex]
$z$, $z(\mathcal{S})$ & the value at a solution of whichever objective is selected, and $z$ alone a solution cost so evaluated \\
$z^*$ & the per-instance reference value of the frozen scoring snapshot, $z^* > 0$, an optimality certificate or a best-known solution; optimality is not implied \\
\tlim & the wall-clock time limit (runtime budget) of a run in seconds, $\tlim \in \mathbb{R}_{>0}$: an experimental parameter, not instance data \\
$z(t)$ & the best-so-far checker-evaluated incumbent value at elapsed time $t$, a right-continuous step function, $+\infty$ before the first incumbent \\
$g = (z - z^*)/z^*$ & the raw excess gap of an incumbent against the reference; $g_{\tlim}$ its end-of-budget value, the final gap \\
$\sgap(z, z^*)$ & the squeezed gap, $(z - z^*)/(z + z^*) = g/(g + 2) \in [-1, 1]$: signed, bounded, with the no-incumbent convention $\sgap = 1$ \\
$\ascore(\tlim)$ & the normalized signed primal integral (anytime score), the time average of \sgap{} over the horizon, $\ascore \in [-1, 1]$, lower is better \\
\addlinespace[1.5ex]
\multicolumn{2}{@{}l}{\textbf{Time-dependent function taxonomy}} \\
\addlinespace[0.5ex]
$\mathcal{F}$, $\mathcal{F}_{\text{ND}}$, $\mathcal{F}_{\text{ND}}^{\text{c}}$ & the real-valued left-continuous PWL functions on compact real intervals, their non-decreasing subclass (NDLCFs) and its continuous subclass (NDCPWLFs) \\
$\bot$ & the empty chain and the empty partial function: the infeasibility answer of route evaluation \\
$\beta_f = ((x^f_1, y^f_1), \dots)$ & a chain representing $f$: the stored breakpoint sequence, non-decreasing in both coordinates, written with round outer parentheses as a tuple-typed structure \\
$\phi_f$ & the breakpoint count of a chain, duplicate abscissae included and no normalization assumed \\
$\mathsf{s}(\beta)$ & the selected function of a chain: the smallest ordinate at a chain abscissa, linear interpolation elsewhere \\
$\dom(f) = [a_f, u_f]$ & the domain of a member of $\mathcal{F}_{\text{ND}}$; on an arc, a path or a route, shorthand for the domain of its ready time function \\
(H) & the domain hypothesis $a_f \leq g(a_g)$ under which the class is closed and the event merge exact \\
$\mathcal{F}^{\text{out}}_a$, $\mathcal{F}^{\text{in}}_a$, $\mathcal{F}^{\text{anc}}_a$ & the outer, inner and anchored temporal maps at anchor $a$, the subclasses route evaluation composes \\
$\tau_{ij} : \mathcal{T}_{\mathrm{arc}} \rightarrow \mathbb{R}_{\geq 0}$ & the arc Travel Time Function (TTF): the duration of $\langle i, j \rangle$ when leaving $i$ at $t$ \\
$\alpha_{ij}(t) = t + \tau_{ij}(t)$ & the arc Arrival Time Function (ATF), non-decreasing exactly when the arc is FIFO \\
$\theta_i$ & the vertex time-window Ready Time Function (vertex RTF), encoding the window $[e_i, l_i]$ and the service time $s_i$; both depot maps are identities \\
$\delta_{ij} = \theta_j \circ \alpha_{ij}$ & the arc Ready Time Function (arc RTF), an NDLCF over the feasible departure times of the arc \\
$\delta_{\mathbf{p}_k}$, $\delta_{\mathbf{r}_k}$ & the prefix RTF at the $k^\text{th}$ vertex of a path (of a route) \\
$\delta_{\mathbf{r}}$ & the route RTF: the ready time at the end depot $d$ for a departure from $o$ at $t$ \\
$\bbt^{\mathbf{r}}$, $\delta_{v_i v_j}^{\mathbf{r}}$ & the Balanced Binary Tree of a route $\mathbf{r}$ and its node RTFs, one per contiguous segment \\
$\mathit{lf}_k$ & the $k^\text{th}$ leaf of the route folding, so that $\delta_{\mathbf{r}} = \mathit{lf}_{m-1} \circ \dots \circ \mathit{lf}_1$ \\
$\Delta_{\mathbf{r}}(t) = \delta_{\mathbf{r}}(t) - t$ & the Route Duration Function (RDF), the only function the route algebra builds that may fall outside $\mathcal{F}_{\text{ND}}$, the arc TTF $\tau_{ij}$ being the other potentially non-monotone member of the taxonomy \\
$\Delta_{\mathbf{r}}^{*}$, $t_{\mathbf{r}}^{*}$ & the minimum duration of a route and the dispatch time attaining it, the quantities minimized by the \textit{Duration} objective \\
\addlinespace[1.5ex]
\multicolumn{2}{@{}l}{\textbf{Time warp (penalty-tolerant squeeze)}} \\
\addlinespace[0.5ex]
$\tilde{\Delta}_{\mathbf{r}}$ & the duration of a route under the warp-tolerant evaluation, the tilde distinguishing it from the exact $\Delta_{\mathbf{r}}^{*}$ \\
$W_{\mathbf{r}}$ & the accumulated route time warp (lateness), $W_{\mathbf{r}} \geq 0$ \\
$\Psi_{\mathbf{r}} = \tilde{\Delta}_{\mathbf{r}} + \eta\, W_{\mathbf{r}}$ & the penalized route duration of the squeeze \\
$\eta$ & the time-warp penalty weight, $\eta > 0$ \\
\addlinespace[1.5ex]
\multicolumn{2}{@{}l}{\textbf{Recurring decorations and conventions}} \\
\addlinespace[0.5ex]
$x^{*}$ & star: a curated reference or best-found value of $x$, optimality not implied, as in $z^*$, $\mathcal{S}^*$; on $\Delta_{\mathbf{r}}^{*}$ and $t_{\mathbf{r}}^{*}$ it keeps the star of the TD literature, there marking the exact minimum of one function rather than a reference \\
$\tilde{x}$ & tilde: a clamped or relaxed variant of $x$, as in $\tilde{\Delta}_{\mathbf{r}}$ \\
$h(t^+)$ & the right limit of a function of $\mathcal{F}_{\text{ND}}$ at $t$ \\
$\langle i, j \rangle$ & angle brackets: reserved for arcs and vertex sequences, paths and routes included, whereas $\{ \dots \}$ is unordered; other vectors or tuple-typed objects, breakpoint chains included, keep round parentheses \\
$[a, b]_{\mathbb{N}}$ & an integer range, plain $[a, b]$ being a real interval \\
\end{longtable}
\endgroup
\clearpage

\section{Introduction}\label{sec:intro}

Urban travel times vary with the time of day \cite{savelsberghCityLogisticsChallenges2016}, so in time-dependent routing the cost of a route is not a sum of arc constants but a composition of non-decreasing piecewise-linear functions, continuous or not, evaluated in floating-point arithmetic \cite{lera-romeroEnhancedBranchPrice2018, lera-romeroLinearEdgeCosts2020}. Every duration-minimization method rests on that one operation. Yet, to the best of our knowledge, the literature has never stated the arithmetic under which a route cost, a published solution and an optimality claim mean the same thing from one implementation to the next. The consequences are measurable rather than hypothetical. Benchmark instances circulate with their defining preprocessing never published as data, while published solutions lose feasibility under an equally defensible reading of the same instance; and optimality claims rest on conditions that are written down nowhere. What a solver delivers over time is unsettled in a related way. Instruments that price the whole trajectory a dispatcher consumes, rather than the value held at one time limit, are established, but a comparison built on them still has to fix what they leave open: the score of a run before its first solution or with no solution at all, and the published reference every gap is read against, together with when that reference is cut. No such comparison has ever been reported on this problem class.

In the classic Vehicle Routing Problem with Time Windows (VRPTW), a fleet of homogeneous vehicles must serve every customer within its time window while respecting vehicle capacities (a.k.a CVRPTW), at minimum travel cost and under fixed travel times that ignore urban road congestion \cite{gendreauTimedependentRoutingProblems2015, vidalUnifiedSolutionFramework2014, dabiaBranchPriceTimeDependent2013, lera-romeroLinearEdgeCosts2020}. Ignoring that variability yields routes that are late or infeasible once driven \cite{rincon-garciaHybridMetaheuristicTimedependent2017, blauthVehicleRoutingTimedependent2024}. The Time-Dependent VRPTW (TDVRPTW, a.k.a TDCVRPTW) addresses this by making travel times vary with the time of departure \cite{malandrakiTimeDependentVehicle1992}.

In Time-Dependent (TD) First-In First-Out (FIFO) routing contexts including Shortest Path Problems (TDSPP), multiple objectives can be distinguished based on decision variables for departure times. The \textit{Makespan} objective minimizes the total elapsed time (earliest arrival time at return depot) when vehicle departure times from the depot are fixed \cite{fontaineExactAnytimeApproach2023}, while the \textit{Duration} objective allows departure times from the depot to be decision variables, thus minimizing total route duration \cite{visserEfficientMoveEvaluations2020, lera-romeroLinearEdgeCosts2020}. A third category, \textit{Travel Time}, further generalizes this by also allowing departure times at every customer to be decision variables, substantially increasing complexity \cite{heDynamicDiscretizationDiscovery2022}. This paper focuses on the \textit{Duration} objective, together with a fleet-cost variant of it defined in section \ref{sec:math} for comparison with the seminal work of \citeauthor{blauthVehicleRoutingTimedependent2024} \cite{blauthVehicleRoutingTimedependent2024}.

The TDVRPTW has historically been tackled by two families of methods: exact algorithms, which return optimality claims but scale poorly, and (meta-)heuristics, which scale at the cost of those claims \cite{fontaineExactAnytimeApproach2023, marti50YearsMetaheuristics2024, adamoReviewRecentAdvances2024}. Branch-price-and-cut dominates the exact side of this problem class \cite{dabiaBranchPriceTimeDependent2013, lera-romeroLinearEdgeCosts2020}. Both families in fact produce improving solutions while they run, yet both are compared on the single value they hold when the clock stops. Two runs ending at that value can differ by orders of magnitude in the moment they first become usable. The primal-integral lineage of section \ref{subsec:anytime-background} prices that difference. Until recently, there was no set of solvers a practitioner could obtain and run against on this problem class, hence nothing to apply it to.

\subsection{Contributions}\label{subsec:contributions}

This paper makes four contributions. The first is \kayros{} \cite{rascoussierKAYROS2026} itself, named after the ancient Greek \textit{kairós} (\textgreek{καιρός}), the right moment to act, as opposed to \textit{chronos} (\textgreek{χρόνος}) as mere elapsed time. \kayros{} is an open-source solver for the duration-minimization (DM-)TDVRPTW that is anytime and exact over one time-dependent engine, evaluated here as release \texttt{1.6.0} with one thread per run. Its governing rule is that the checker is the referee: an external, epsilon-free, canonical checker defines the objective, the solver engine is a bit-identical port of that checker's arithmetic, and every value the solver publishes is the checker's recomputation of the routes it returns. The same rule reaches inside the search that we call \emph{trees rank, the fold accounts}, i.e. fast structures ranks candidate moves while every accepted move is repriced by the checker-consistent sequential fold before it can become an incumbent.

The second contribution is the algorithmic core that any duration-minimization solver rests on. The literature composes non-decreasing continuous piecewise-linear functions (NDCPWLFs), an operation described through a proof \cite{visserEfficientMoveEvaluations2020} and at proposition level \cite{blauthVehicleRoutingTimedependent2024} but never as an algorithm. Yet the canonical stepwise benchmarks need functions that jump. We identify the semantic gap this leaves and define the left-continuous class the checker actually evaluates, separating the stored breakpoint chain from the function it selects. We then prove under one explicit domain hypothesis that the class is closed under composition. The composition routine shipped in the checker computes that operation exactly (edge cases included: vertical runs, plateaus, their tie, boundary ties), and the continuous results are recovered as the jump-free case. We add to this an analysis of when its evaluation and breakpoint normalization are exact in IEEE-754 double-precision arithmetic, and of the three independent families of defects that implicit descriptions of this operation have caused in the literature. The stakes are measured rather than asserted: 72 of the 146 published Best-Known Solutions (BKS) of \citeauthor{lera-romeroLinearEdgeCosts2020} \cite{lera-romeroLinearEdgeCosts2020} carry at least one exact floating-point tie, through 188 of their 898 routes, and re-deriving the instance distances at full precision flips 11 of those solutions to infeasible. This is why the solver and its companion benchmark suite distribute instances as byte-exact canonical data rather than as re-derivation recipes.

The third contribution is an evaluation methodology for anytime solvers, applied here rather than merely proposed: a normalized signed primal integral scored against a published reference, aggregated under panel-equal weighting and a confirmatory design pre-committed before any campaign record existed. The design rationale of the metric itself is developed in a companion paper. \cite{rascoussierAnytimeSolverEvaluation2026} We apply the methodology in a cross-solver campaign of 212 instances against the three \emph{available} solvers, in the sense section \ref{subsec:solver-landscape} fixes: obtainable, time-dependent at the model level, and exposing an interface through which an instance can be fed and a solution stream read back. The fourth contribution is the exact component and its certificate semantics. Building on the open-source branch-price-and-cut solver of \citeauthor{lera-romeroLinearEdgeCosts2020} \cite{lera-romeroLinearEdgeCosts2020}, we state precisely what an optimality certificate establishes, as a conditional computational claim gated by a four-run publication protocol. We publish 704 such certificates over 2\,072 instances as falsifiable, retractable artifacts rather than as proof objects.

On the experimental side, the benchmark of section \ref{sec:results} holds 212 instances drawn from five families, split into 10 panels, on which every arm receives one hour of single-threaded time per run, for a campaign of 8\,800 runs. \kayros{} attains a pooled anytime score 63.3\% lower than the strongest of the three available contenders, all three pre-committed contrasts being significant under Holm correction, each at the resolution floor of the bootstrap. It holds the lowest mean score on 7 of the 10 panels. On the classic benchmark of the exact literature it establishes 10 values strictly below published references that earlier work left unproven. Under the fleet-cost objective it improves all 30 high-effort reference solutions of \citeauthor{blauthVehicleRoutingTimedependent2024} \cite{blauthVehicleRoutingTimedependent2024} at an unchanged route count. Section \ref{sec:results} reports all of this in full, including the three panels Timefold takes, the fleet ceiling our fleet descent hits at $n \geq 1000$, and the certificates we have withdrawn.

\subsection{Outline}

The paper is built as one argument: sections \ref{sec:background} and \ref{sec:math} establish the composition operation and the arithmetic under which a route cost is well defined, section \ref{sec:solver} builds the solver on them, and section \ref{sec:results} measures what that buys against the competition a practitioner can obtain. Section \ref{sec:discussion} states what the measurement does not establish, and section \ref{sec:conclusion} closes on the four contributions above. Appendices \ref{app:bks} to \ref{app:components} carry the material the running text points to: per-instance records, per-panel statistics, the composition grid, the full function taxonomy with the proofs of the composition theorems, the solver internals, the protocol reference and the component studies.

\section{Background and Related Work}\label{sec:background}

\subsection{Time-Dependent Literature}\label{subsec:td-literature}

The TDVRPTW of \citeauthor{malandrakiTimeDependentVehicle1992} \cite{malandrakiTimeDependentVehicle1992} rests on time-dependent shortest path problems, whose simplest objective is minimizing the arrival time at the destination (\textit{Makespan}). Early works build upon classic Dynamic Programming (DP) algorithms in discretized time settings \cite{cookeShortestRouteNetwork1966}. The seminal work of \citeauthor{deanShortestPathsFIFO2004} \cite{deanShortestPathsFIFO2004} discusses many essential components of TD routing problems, including the FIFO property, Inverse Arrival Time (IAT), Earliest Arrival Time (EAT), Earliest Departure Time (EDT), Latest Departure Time (LDT), and discretization versus continuous time models. It nonetheless makes incorrect claims about the complexity of the problem in the continuous setting, corrected in \cite{foschiniComplexityTimeDependentShortest2011}. In the context of the TDVRPTW, the presence of hard time windows makes the \textit{Makespan} objective for a given route polynomial and \textit{Duration} pseudo-polynomial, as it depends on the number of breakpoints in the PieceWise Linear (PWL) Travel Time Functions (TTFs) \cite{omerTimedependentShortestPath2019, omerPolynomialAlgorithmMinimizing2019}.

Building FIFO TTFs generally relies on the speed-profile model of Ichoua, Gendreau and Potvin (IGP) \cite{ichouaVehicleDispatchingTimedependent2003}, which gives speeds as piecewise-constant profiles and travel times as the continuous piecewise-linear functions they induce, and is the model behind most published TDVRPTW benchmarks. Time discretization and a realistic non-IGP road-network benchmark are discussed in \cite{rifkiImpactSpatiotemporalGranularity2020} and used in a traveling-salesman context in \cite{fontaineExactAnytimeApproach2023}. The FIFO property of that benchmark, lost with discretization, is restored by preprocessing, and \kayros{} draws on that work for its own continuous-time loading of non-IGP travel times. Discretization also runs the other way, the breakpoints of the TTFs being the structure that Dynamic Discretization Discovery (DDD) \cite{bolandPerspectivesIntegerProgramming2019} exploits to build its discretization \cite{heDynamicDiscretizationDiscovery2022}. Another stream of research simplifies the TTFs through some form of controlled approximation \cite{blauthVehicleRoutingTimedependent2024} or optimizes their computation with tailored data structures \cite{visserEfficientMoveEvaluations2020}. \kayros{} takes structures from both and approximation from neither: the route trees of section \ref{subsec:kayros-folding} rank candidate moves, while every accounted cost is recomputed exactly.

Underneath all of these lines sits one primitive: evaluating a route under the \textit{Duration} objective means composing the non-decreasing piecewise-linear functions carried by its successive arcs, continuous in the literature and left-continuous on stepwise data and vertices. The ready time of a prefix becomes the argument of the next function, so that every move evaluation, pricing label and published cost is an instance of that composition. The operation is long-standing prior art, from time-dependent shortest paths \cite{ordaShortestpathMinimumdelayAlgorithms1990} to time-dependent contraction hierarchies on road networks \cite{batzMinimumTimedependentTravel2013}. Within TD routing it has been stated at two levels for continuous functions: through the proof of a theorem \cite{visserEfficientMoveEvaluations2020} and at proposition level with complexity and correctness \cite{blauthVehicleRoutingTimedependent2024}. To the best of our knowledge, it has never been stated as an algorithm with its edge cases, and never for the discontinuous functions of stepwise benchmarks. Section \ref{subsec:composition} states it as one, floating-point behavior included.

\subsection{Exact Methods}\label{subsec:exact-methods}

Exact methods for this problem class rest on column generation \cite{desaulniers2006column}: the Linear Program (LP) is decomposed into a restricted master problem over a small pool of routes and a Pricing Problem (PP), typically NP-hard, that generates routes of negative reduced cost. Branch \& Price (B\&P) embeds that process in a branch-and-bound tree to restore the integrality of the Integer Linear Program (ILP). Section \ref{subsec:spf} states the reformulation \kayros{} uses, and the tutorial literature covers the machinery \cite{feilletTutorialColumnGeneration2010, uchoaOptimizingColumnGeneration, petrisTutorialBranchPriceandCutAlgorithms2023}.

B\&P methods dominate exact approaches in the TDVRPTW literature. The pioneering work of Dabia et al.~\cite{dabiaBranchPriceTimeDependent2013} solves the Time-Dependent Elementary Shortest Path Problem with Resource Constraints (TDESPPRC) as the pricing subproblem with a DP algorithm (a.k.a Labeling Algorithm). Lera-Romero et al.~\cite{lera-romeroLinearEdgeCosts2020} later refined that scheme with cutting planes and additional pricing relaxations into branch-price-and-cut, releasing the open-source solver the exact component of \kayros{} is built on. The same frame has been carried to related variants: up to 25 customers with heuristic pricing \cite{huartHeuristicTimeDependentVehicle2016}, 45 requests for the pickup-and-delivery variant \cite{sunTimedependentPickupDelivery2018}, 25 customers on road networks \cite{bentichaTimeDependentVehicleRouting2021} and 25 customers for time-window assignment \cite{splietTimeWindowAssignment2018}. Outside that frame, the DP, constraint programming and Mixed Integer Linear Programming (MILP) formulations common in the literature of the Time-Dependent Traveling Salesman Problem with Time Windows (TDTSPTW) \cite{dabiaDynamicProgrammingApproach2010, fontaineExactAnytimeApproach2023} do not scale well once the Hamiltonian property is relaxed in the multi-vehicle context \cite{toth2002vehicle}. This work also builds on the time-dependent traveling-salesman research developed with Christine Solnon, from Pénélope Aguiar-Melgarejo's constraint-programming thesis \cite{aguiar-melgarejoConstraintProgrammingApproach2016} to Romain Fontaine's thesis and the exact-and-anytime approach of Fontaine, Dibangoye and Solnon \cite{fontaineExactAnytimeHeuristic2024,fontaineExactAnytimeApproach2023}. These studies provide the single-vehicle foundations for the multi-vehicle setting considered here.

That lineage states its own warning, which section \ref{subsec:exactness-background} takes up: what the exact component of \kayros{} reports (section \ref{subsec:kayros-exact}) is a conditional computational claim, made under explicit arithmetic, tolerance and search assumptions, and not a proof object a reader can check without rerunning the solver.

\subsection{Heuristic Methods}

Population-based methods range from Genetic Algorithms (GAs), adapted to the time-dependent setting with custom encodings and evaluation functions~\cite{kumarTimeDependentVehicleRouting2015, kumarDevelopmentEfficientGenetic2017}, to the Hybrid Genetic Search (HGS) of \citeauthor{zhaoHybridGeneticSearch2024}~\cite{zhaoHybridGeneticSearch2024}, whose tailored DP time-dependent split addresses the multi-trip TDVRPTW. Single-solution and hybrid frameworks are as common. Destroy-and-repair schemes built on Large Neighborhood Search (LNS) reinsert removed customers through DP routines~\cite{rincon-garciaHybridMetaheuristicTimedependent2017}, also on a multi-trip variant~\cite{panMultitripTimedependentVehicle2021}. Tabu Search (TS) is competitive on its own, both on time-dependent speed profiles~\cite{ichouaVehicleDispatchingTimedependent2003} and on road networks~\cite{gmiraTabuSearchTimedependent2021}, and it has also been combined with LNS~\cite{panHybridAlgorithmTimedependent2021}. Simulated Annealing (SA) has been driven by time-dependent shortest paths~\cite{jaballahTimedependentShortestPath2021}.

Local Search (LS) adapted to the time-dependent setting is the intensification layer common to those frameworks, embedded in Iterated Local Search (ILS)~\cite{hashimotoIteratedLocalSearch2007}, in LNS~\cite{panMultitripTimedependentVehicle2021} and in heuristics under realistic traffic models~\cite{andresfigliozziTimeDependentVehicle2012, blauthVehicleRoutingTimedependent2024}. The efficient evaluation of time-dependent move operators is its recurring concern~\cite{hashimotoIteratedLocalSearch2007, visserEfficientMoveEvaluations2020, blauthVehicleRoutingTimedependent2024}. Section \ref{subsec:kayros-folding} treats that evaluation as the layer every other component prices through. A time-dependent Ant Colony Optimization (ACO) stream ran from 2008 to 2011. Yildirim~\cite{yildirimANTCOLONYALGORITHM2008} proposed an ACO framework augmented with local search, and Donati et al.~\cite{donatiTimeDependentVehicle2008} introduced a Multi-Ant Colony System, later extended by Balseiro et al.~\cite{balseiroAntColonyAlgorithm2011} to a bi-objective TDVRPTW. That stream has been dormant since.

The anytime layer of \kayros{} sits in the single-solution branch: a first-improvement time-dependent descent restricted to granular candidate lists, perturbed by ruin-and-recreate kicks and driven by late acceptance, in the shape of the modern single-trajectory references, notably the lessons learned from PyVRP~\cite{PyVRP, hashimotoIteratedLocalSearch2007}. Section \ref{subsec:kayros-anytime} states it, and section \ref{subsec:components} reports the head-to-head against an ant-colony arm over the same descent that fixed the default. Objectives pricing the fleet alongside duration make the route count a search target of its own, which the VRPTW literature addresses with dedicated route-minimization heuristics~\cite{nagataPowerfulRouteMinimization2009}. The high-effort reference solutions of \citeauthor{blauthVehicleRoutingTimedependent2024}~\cite{blauthVehicleRoutingTimedependent2024} are published under such an objective. Section \ref{subsec:kayros-fleet} presents the fleet-aware descent \kayros{} answers them with.

\subsection{Relaxations, Approximations and Matheuristics}

\citeauthor{foschiniComplexityTimeDependentShortest2011}~\cite{foschiniComplexityTimeDependentShortest2011} describe a bounded sampling-based approximation of TTFs that reduces their breakpoint count. Matheuristics instead reuse the master-and-pricing decomposition itself, solving the master with an ILP or MILP solver and the PP with a heuristic \cite{huartHeuristicTimeDependentVehicle2016, zhaoTimedependentBiobjectiveVehicle2019}. \kayros{} follows neither: every composition stays exact and un-normalized (section \ref{subsec:composition}), and its exact mode prices through valid relaxations rather than through a heuristic (section \ref{subsec:kayros-exact}).

The PP is the Elementary Shortest Path Problem with Resource Constraints (ESPPRC), which is NP-hard \cite{dror1994note}, so relaxing elementarity for faster pricing is standard practice \cite{petrisTutorialBranchPriceandCutAlgorithms2023}. The ng-route relaxation \cite{baldacciNewRouteRelaxation2011} sits between the elementary and the fully relaxed problem, allowing a revisit only once the route has left a per-customer neighborhood set. The routes it yields form a superset of the elementary ones and admit stronger label domination \cite{bulhoesBranchandpriceAlgorithmMinimum2018}, at a label count growing with the neighborhood size. Neighborhoods can be augmented dynamically \cite{tilkDynamicProgrammingMinimum2017}, in the spirit of decremental state-space relaxation \cite{pecinEfficientNgRoutePricing2012}, and a time-dependent traveling-salesman variant of the relaxation uses non-time-dependent neighborhoods \cite{lera-romeroDynamicProgrammingTimeDependent2022}. The base solver of \citeauthor{lera-romeroLinearEdgeCosts2020}~\cite{lera-romeroLinearEdgeCosts2020} prices the elementary TDESPPRC exactly, preceded by cheaper heuristic labeling levels that relax its cost and elementarity dominance checks. An ng-route level was added to the vendored solver in the course of this work, but it is dormant by default. The exact component of \kayros{} prices under full elementarity and capacity (section \ref{subsec:kayros-exact}). Exact solvers of this kind stop near 100 customers while the heuristics above reach urban scale, but both price every route through the same composition, and neither literature says in what arithmetic.

\subsection{Numerical Exactness and Data Provenance}\label{subsec:exactness-background}

Section \ref{subsec:td-literature} left the composition at proof \cite{visserEfficientMoveEvaluations2020} or proposition \cite{blauthVehicleRoutingTimedependent2024} level, on continuous functions, and without the arithmetic that would make two independent implementations agree. \citeauthor{dabiaBranchPriceTimeDependent2013} \cite{dabiaBranchPriceTimeDependent2013} published a corrected version of their branch-and-price paper \cite{dabiaErratumBranchPrice2024}, and \citeauthor{lera-romeroLinearEdgeCosts2020} state that implementation details matter on this problem class \cite{lera-romeroLinearEdgeCosts2020}. Neither paper sets out that specification, which section \ref{subsec:composition} gives, together with an analysis of when evaluation and breakpoint normalization are exact in IEEE-754 double-precision arithmetic (appendix \ref{app:taxonomy}).

The same uncertainty reaches the instances. \textit{Dabia2013} \cite{dabiaBranchPriceTimeDependent2013}, the reference benchmark of the exact time-dependent literature, is not fully specified by its published description: the preprocessing that defines it was never released as data such that it was necessary to rely on the authors' help to re-derive it. That preprocessing scales every Solomon quantity by 10 and floor-truncates every arc distance to an integer, choices that are invisible in the papers yet part of the definition of the benchmark (section \ref{subsec:benchmarks}, appendix \ref{app:protocol}). Scaling and rounding choices of that kind change the feasible region of every instance \cite{rascoussierImpactScalingRounding2026}. The consequence is measurable: re-deriving the distances at full precision, an equally defensible reading of the same data, makes some of the 146 published best-known solutions (BKS) of \citeauthor{lera-romeroLinearEdgeCosts2020} \cite{lera-romeroLinearEdgeCosts2020} infeasible, while those same solutions remain exactly correct on the canonical instances. Section \ref{subsec:benchmarks} gives the count, appendix \ref{app:bks} the per-instance record and appendix \ref{app:taxonomy-exactness} the tie census behind it.

Exactness is also a property of the travel-time model. Stepwise instance families carry arc travel times that genuinely jump at certain instants \cite{rifkiImpactSpatiotemporalGranularity2020}, which the speed-profile model of \citeauthor{ichouaVehicleDispatchingTimedependent2003} \cite{ichouaVehicleDispatchingTimedependent2003} cannot express. Hence a solver accepting both fixes what evaluation at a jump returns (section \ref{subsec:kayros-arch}). This requirement reaches the exact side as well, where an optimality claim rests both on the arithmetic of its route costs and on the tolerances of its linear-programming and pricing steps. Section \ref{subsec:kayros-exact} therefore states a certificate as a conditional computational result whose conditions are named in the stamp it carries, in the practical tradition of the exact routing literature.

\kayros{} answers with one principle, and the rest of this paper is its application: a canonical, epsilon-free checker defines the objective, every value the solver publishes is that checker's recomputation of the routes returned, and benchmark instances are distributed as byte-exact canonical data with checksums rather than as re-derivation recipes. Section \ref{sec:math} builds the arithmetic this requires, section \ref{sec:solver} holds the solver to it, and section \ref{sec:results} reports the campaign it makes comparable.

\subsection{Anytime Resolution and its Evaluation}\label{subsec:anytime-background}

An algorithm is \emph{anytime} when it can be interrupted at any moment and still return a valid answer, whose quality improves with the time it is granted \cite{zilbersteinUsingAnytimeAlgorithms1996}. Most practical optimization methods already have this shape: a branch-and-bound search streams incumbents while it proves, and a metaheuristic streams improvements until its budget runs out. However, their trajectories differ sharply: one run may spend most of its budget without a feasible solution and then improve quickly, while another may construct a good solution immediately and plateau. Combining both paradigms inside one method is a research direction of its own: \citeauthor{fontaineExactAnytimeApproach2023} \cite{fontaineExactAnytimeApproach2023, fontaineExactAnytimeHeuristic2024} build an exact and anytime search for the TDTSPTW on an anytime extension of A*, with local search, bounding and time-window propagation. It is a single-vehicle counterpart of the combination \kayros{} pursues in the multi-vehicle setting.

Treating that trajectory, rather than its last point, as the object being measured is established practice. The reference instrument is the \emph{primal integral} of Berthold \cite{bertholdMeasuringImpactPrimal2013}, which integrates the primal gap of the incumbent over elapsed time and reduces a run to one number charging both how long a solver takes to become useful and how good its final solution is. It was introduced because a final-value table cannot separate a primal heuristic producing its solution in the first seconds from one producing the same solution at the limit. Its reference is the best value known at analysis time, updated whenever a run improves it \cite{bertholdMeasuringImpactPrimal2013}, which keeps the gap nonnegative at the price of a reference state that cannot be pinned before the campaign it scores. Successors refine the construction, notably the confined variant of \citeauthor{bertholdConfinedPrimalIntegral2021} \cite{bertholdConfinedPrimalIntegral2021}, which changes the time weighting of the integral so that different parts of a run carry different influence, a design choice orthogonal to the gap function itself. The measure now serves in benchmarking practice, both as the target of automatic tuning of anytime behavior \cite{lopez-ibanezAutomaticallyImprovingAnytime2014} and as the scoring rule of the vehicle-routing track of the 12th DIMACS Implementation Challenge \cite{DIMACS2021} and subsequent competitions. That rule departs from the original reference policy: it averages an excess gap over a declared budget against a reference frozen before the experiment, and lets a solution better than that reference contribute negatively.

A single-time-limit comparison loses all of that. The ordering it reports is a property of the one budget that produced it, and nothing in it constrains the ordering at any other. For urban logistics the point is operational rather than aesthetic: a dispatcher's decision horizon is often minutes, while published comparisons are run in hours. What this lineage leaves open is therefore not whether to measure the trajectory but how to instantiate the measurement. A score or a curve still requires a gap function, a reference value together with a policy fixing when it is cut, a value for the state before the first incumbent, a convention for invalid or empty runs, and an aggregation rule over heterogeneous instances. That rule must neither let the family with the most instances carry the headline nor drop the runs that never produced a feasible solution. The profile-based aggregations of optimization benchmarking answer that last difficulty in their own way \cite{dolanBenchmarkingOptimizationSoftware2002}. Those choices, and the measurement of when they change the conclusions of a study, are the subject of a companion paper \cite{rascoussierAnytimeSolverEvaluation2026}.

None of those choices is well defined without a timing and validation contract, which an anytime comparison needs and a final-value table does not. It requires per-run trajectories composed of timestamped incumbents on a common clock. Fairness is ensured by an external evaluator, rather than each solver's own accounting, which recomputes the objective of every candidate before it may enter a run's best-so-far envelope. It also requires a per-instance reference value, since a gap needs a denominator and the optimum is unknown on most instances of interest. The instrument of section \ref{subsec:metrics} settles that contract within the lineage above as a bounded, threshold-free, signed and reference-based variant. The gap is put on a bounded scale, so the state before the first incumbent has a finite natural value rather than an arbitrary cap or an acceptance threshold below which a poor incumbent and no incumbent at all score alike. The score is signed rather than clipped at zero, so that beating the reference stays visible, as it already does under the DIMACS rule. The reference is a frozen published snapshot rather than a solver-derived bound, a choice whose cost sections \ref{subsec:benchmarks} and \ref{sec:discussion} state plainly. What is missing on this problem class is not the instrument but its application: to the best of our knowledge, no cross-solver anytime comparison has ever been reported on the TDVRPTW. The reason has more to do with available software than with metrics, as stated in section \ref{subsec:solver-landscape}.

\subsection{The Available-Solver Landscape}\label{subsec:solver-landscape}

Comparing anytime solvers on the TDVRPTW presupposes solvers to compare against, which is a stronger requirement than it sounds. We call a solver \emph{available} for such a comparison when three conditions hold together: it accepts genuinely time-dependent travel times at the model level, ideally the rich per-arc piecewise-linear functions of section \ref{sec:math} rather than a fixed time discretization; it exposes an Application Programming Interface (API) or an input format through which an instance can be fed and a solution stream read back, without modifying the solver itself; and it can be obtained and run, license included. Methods described in papers whose code was never released fail the third condition, however strong they are, and a comparison that ignores this reports on a literature rather than on software a practitioner can run. What the mainstream routing engines do and do not offer on this point, and the decade of user requests for time-dependent travel times their public issue trackers record, are documented in the companion technical report \cite{rascoussierKAYROSTechReport2026}.

Two open-source solvers meet all three conditions on this problem class. Timefold Solver \cite{timefoldSolver2026}, the continuation of OptaPlanner, is a general planning and constraint-solving platform whose vehicle-routing model consumes the rich per-arc travel-time functions of an instance without approximation, what section \ref{sec:results} calls a \emph{Tier-1} arm. It is the strongest contender in the campaign of section \ref{sec:results}. jsprit \cite{graphhopperJsprit2026} is a rich vehicle-routing toolkit consuming time-dependent transport-time callbacks, but it keeps every route at its earliest feasible departure, a documented model contract. Section \ref{subsec:contenders} reports its consequences under the \textit{Duration} objective rather than hiding them. A third open solver deserves mention for what it is not: PyVRP \cite{PyVRP} is a strong single-trajectory ILS reference for the VRPTW, and it inspired the anytime layer of section \ref{subsec:kayros-anytime}, but its model currently targets the static problem family rather than the duration-minimization TDVRPTW studied here, though following the open-source release of \kayros{} the PyVRP team has \href{https://github.com/PyVRP/PyVRP/issues/867#issuecomment-5356658546}{announced a time-dependent extension} soon for their entreprise version.

On the commercial side, Hexaly \cite{hexalyTDCVRPTW2026} publishes a model template for the TDVRPTW and grants academic licenses, which makes it available in the sense above. The solver is designed for multi-threaded use, which section \ref{subsubsec:threads} prices separately. It is also the one arm offering two genuinely different encodings of the same instance: a rich external-function encoding, which is Tier-1, and a time-sliced approximation of those same functions, the campaign's one \emph{Tier-2} arm. The campaign reports both, as two disclosed model tiers of one solver rather than as one number, and never ranks the second against the Tier-1 arms. The binding through which those external functions are evaluated is itself a disclosed choice and not a neutral implementation detail, and section \ref{subsubsec:threads} records which one the campaign uses and why. Details including the effect of the Tier-2 approximation against the Tier-1 results, and the effect of multi-threading on both arms are explored in a companion report \cite{rascoussierKAYROSTechReport2026}.

Two research codes a reader would expect to see are absent. The industrial solver behind the reference solutions of \citeauthor{blauthVehicleRoutingTimedependent2024} \cite{blauthVehicleRoutingTimedependent2024} is not published, so what can be compared is its solutions and not its behavior. Section \ref{subsec:bks} does exactly that, and section \ref{sec:discussion} states the boundary of such a comparison. The branch-price-and-cut solver of \citeauthor{lera-romeroLinearEdgeCosts2020} \cite{lera-romeroLinearEdgeCosts2020} is open and would qualify, but it is the very component \kayros{} vendors as its exact mode (section \ref{subsec:kayros-exact}), so running it as a contender would compare the solver against a part of itself. This is why the campaign of section \ref{sec:results} is run against Timefold, Hexaly and jsprit, and against nothing else. To the best of our knowledge, they are the solvers a practitioner could obtain and run on this problem class at the time of the campaign. Their pinned versions and model contracts are given with the protocol, in table \ref{tab:arms}.

\section{Problem Formulation and Time-Dependent Machinery}\label{sec:math}

This section defines the problem and the objectives on which \kayros{} and its external checker operate. It provides the duration-minimization TDVRPTW and its fleet-cost variant, the route-based formulation that exact resolution rests on, and the taxonomy of travel-time-related functions both solving modes of \kayros{} operate on. Throughout, we follow notations introduced in the literature \cite{feilletTutorialColumnGeneration2010, dabiaBranchPriceTimeDependent2013, lera-romeroEnhancedBranchPrice2018, lera-romeroLinearEdgeCosts2020, visserEfficientMoveEvaluations2020,blauthVehicleRoutingTimedependent2024}.

\subsection{Formal definition of the DM-TDVRPTW}\label{subsec:dm-def}

\paragraph{Base model and instance data.} The TDVRPTW is defined on a directed Customer-Based Graph (CBG) $\mathcal{G} = (\mathcal{V}, \mathcal{A})$, where the vertex set $\mathcal{V} = \mathcal{C} \cup \{o, d\}$ includes $n = \left| \mathcal{C} \right| \in \mathbb{N}_{>0}$ customers and a duplicated mono-depot as origin $o$ and destination $d$ vertices. The arc set $\mathcal{A} = \{ \langle i, j \rangle \in \mathcal{V} \times \mathcal{V} \:|\: i \neq j, i \neq d, j \neq o\} \subset \mathcal{V}^2$ is taken as given, and possibly derived from a Road-Network Graph (RNG) \cite{blauthVehicleRoutingTimedependent2024}. Three members of one family of time sets are used. The ambient set of scheduling timestamps is $\mathcal{T}_{\mathrm{ext}} := \mathbb{R}_{\geq 0}$, in which an arrival or a service completion past the planning horizon keeps a well-defined value. The finite planning horizon is $\mathcal{T} = [0, T] \subset \mathcal{T}_{\mathrm{ext}}$, which contains every time window. The arc-departure domain is $\mathcal{T}_{\mathrm{arc}} = [a_{\mathrm{arc}}, T_{\mathrm{arc}}] \subseteq \mathcal{T}$, the common interval on which every arc function is materialized.\footnote{Note that the canonical data contract of section \ref{subsec:kayros-arch} makes all arc functions of an instance total on exactly this interval. Fixing one common interval is a normalization of the data rather than a restriction of the model: a formulation in which each arc function carries its own departure domain, a subinterval of $\mathcal{T}$, is recovered by substituting that domain for $\mathcal{T}_{\mathrm{arc}}$ arc by arc in what follows, provided the per-arc domains keep a common lower endpoint: the machinery of section \ref{subsec:taxonomy} is built on partial functions and their domains, but its composition guarantee anchors every arc function at the shared $a_{\mathrm{arc}}$ (proposition \ref{prop:taxonomy}), so only the right endpoints may vary freely.} Durations are values of $\mathbb{R}_{\geq 0}$, so that a clock date and an elapsed duration remain distinct objects even though the underlying numerical sets coincide. A fleet of at most $K \in \mathbb{N}_{>0}$ homogeneous vehicles with capacity $Q \in \mathbb{R}_{>0}$, starting from vertex $o$, must serve all customers $v \in \mathcal{C}$, each with a demand $q_v \in \mathbb{R}_{\geq 0}$, a service time $s_v \in \mathbb{R}_{\geq 0}$ (a duration, not a point of the horizon), and a Time Window (TW) $[e_v, l_v] \subseteq \mathcal{T}$ where $e_v \leq l_v$ are the earliest and latest times of service at $v$ \cite{ichouaVehicleDispatchingTimedependent2003}. Arrival before $e_v$ is allowed as waiting time, but arrival after $l_v$ is infeasible \cite{fontaineExactAnytimeHeuristic2024}. Without loss of generality, we assume $s_o = s_d = q_o = q_d = 0$. A route departs $o$ within the depot window $[e_o, l_o]$ and must reach $d$ no later than $l_d$.\footnote{The depot window $[e_o, l_o]$ and the depot due date $l_d$ are ordinary data of $\mathcal{T}$ and are not identified with the horizon: $l_d$ need not equal $T_{\mathrm{arc}}$ or $T$ (on the Blauth2024 instances of section \ref{subsec:bks}, the depot closes 2 hours after the last materialized departure). Departures being materialized on $\mathcal{T}_{\mathrm{arc}}$ only, the effective departure window is $[e_o, l_o] \cap \mathcal{T}_{\mathrm{arc}}$, which the path recursion of the next paragraph takes as its base domain.} In this model, every valid arc $\langle i, j \rangle \in \mathcal{A}$ carries a given PWL arc TTF $\tau_{ij} : \mathcal{T}_{\mathrm{arc}} \to \mathbb{R}_{\geq 0}$ (one function per arc, the subscript binding the arc) that gives, for every departure time $t$ in the arc-departure domain, the duration it takes to go from $i$ to $j$ when leaving $i$ at $t$. Such functions are piecewise linear with at most $p \in \mathbb{N}_{>0}$ breakpoints, i.e. $\tau_{ij}$ has at most $p - 1$ pieces \cite{visserEfficientMoveEvaluations2020}. These functions are not required to be continuous: the stepwise families of section \ref{subsec:benchmarks} carry genuine travel-time jumps, which section \ref{subsec:taxonomy} formalizes through left-continuous functions. They respect the FIFO property, meaning that the associated PWL Arrival Time Function (ATF) $\alpha_{ij} : \mathcal{T}_{\mathrm{arc}} \to \mathcal{T}_{\mathrm{ext}}$, $\alpha_{ij}(t) = t + \tau_{ij}(t)$, are non-decreasing \cite{ichouaVehicleDispatchingTimedependent2003}: departing later never allows arriving strictly earlier.

\paragraph{Paths, routes, and the objective.} The definitions below use the composed-function notations of the TD literature \cite{lera-romeroLinearEdgeCosts2020, visserEfficientMoveEvaluations2020}. A reader meeting them for the first time may prefer to start with section \ref{subsec:taxonomy}, which introduces each of the functions composed here one by one and instantiates them on the toy route of figure \ref{fig:toy-route}. The set of all finite non-empty ordered subsets (sequences) of vertices $\mathcal{P}_{\mathcal{V}} \subseteq \bigcup_{k=1}^{\left| \mathcal{V} \right|} \mathcal{V}^{k}$ is called the \textit{path set}, containing any \textit{path} $\mathbf{p} = \langle v_1, v_2, \dots, v_m \rangle \in \mathcal{P}_{\mathcal{V}}$ of size $m \in \mathbb{N}_{>0}$. The path ready time function $\delta_{\mathbf{p}_k} : \dom(\mathbf{p}_k) \to \mathcal{T}_{\mathrm{ext}}$, $1 \leq k \leq m$, gives the earliest time of service completion at $v_k \in \mathbf{p}$ when the vehicle is ready to leave $v_1 \in \mathbf{p}$ at $t \in \dom(\mathbf{p}_k)$. It is defined by the recursion $\delta_{\mathbf{p}_1}(t) = t$ and $\delta_{\mathbf{p}_k} = \theta_{v_k} \circ \alpha_{v_{k-1} v_k} \circ \delta_{\mathbf{p}_{k-1}}$ over the vertex and arc functions $\theta$ and $\alpha$ formalized in section \ref{subsec:taxonomy}, the composition being the pointwise composition of partial functions defined there \cite{lera-romeroLinearEdgeCosts2020}. Note that the base case deliberately applies no $\theta_{v_1}$: the argument $t$ is the \emph{ready} time at $v_1$, i.e. service at $v_1$ (if any) is already completed at $t$. Hence for a route, whose first vertex is the depot $o$ with $s_o = 0$, $t$ is simply the depot departure time and $\dom(\mathbf{p}_1) = [e_o, l_o] \cap \mathcal{T}_{\mathrm{arc}}$. At the destination the convention is $\theta_d = \mathrm{Id}_{[0, l_d]}$: a route ends upon arrival, with no waiting and no service, and the arrival must respect the depot due date. Here, $\dom(\mathbf{p}_k) \subseteq \dom(\mathbf{p}_1)$ is the set of \textit{feasible departure times} of the path $\mathbf{p}$ up to vertex $v_k \in \mathbf{p}$, the notation $\dom(\cdot)$ on a path, a route or an arc being shorthand for the domain of its ready time function. Explicitly, $t \in \dom(\mathbf{p}_k)$ if, for every $i \in [2, k]_\mathbb{N}$, the ready time at the preceding vertex is an admissible departure on the arc, $\delta_{\mathbf{p}_{i-1}}(t) \in \mathcal{T}_{\mathrm{arc}}$, and the vehicle reaches $v_i$ before its TW deadline, $\alpha_{v_{i-1} v_i}(\delta_{\mathbf{p}_{i-1}}(t)) \leq l_{v_i}$. Both conditions together define exactly the domain of the recursive composition, which section \ref{subsec:taxonomy} characterizes. Note that when considering a whole path, the notation $\mathbf{p}_m$ is shortened to $\mathbf{p}$. A \textit{feasible path} respects the elementarity constraint $\forall i, j \in [1, m]_\mathbb{N}, i \neq j \Rightarrow v_i \neq v_j$, the capacity constraint $\left( \sum_{v \in \mathbf{p}} q_v \right) \leq Q$, and has at least one feasible departure time, $\dom(\mathbf{p}) \neq \emptyset$ \cite{lera-romeroLinearEdgeCosts2020}. A \textit{route} is any path of size $m \in \mathbb{N}_{\geq 3}$ starting and ending at the depot and serving at least one customer, $\mathbf{r} = \langle o, \mathbf{r}_{\mathcal{C}}, d \rangle \in \mathcal{P}_{\mathcal{V}}$ with $\mathbf{r}_{\mathcal{C}} \in \mathcal{C}^{m - 2}$. The set of \textit{feasible routes} is $\Omega \subset \mathcal{P}_{\mathcal{V}}$. For a feasible route $\mathbf{r}$, let $\Delta_{\mathbf{r}}^{*} \in \mathbb{R}_{\geq 0}$ be its minimum duration over its feasible departure times (a duration, not a date of the horizon, formally derived in section \ref{subsec:taxonomy}), and let $\Delta^{*} : \Omega \to \mathbb{R}_{\geq 0}$, $\Delta^{*}(\mathbf{r}) = \Delta_{\mathbf{r}}^{*}$, be the route-cost function. This paper considers the common Duration-Minimization objective (DM-TDVRPTW), a.k.a \textit{Duration}: find a solution $\mathcal{S} \subseteq \Omega$ of at most $K$ routes ($\left| \mathcal{S} \right| \leq K$) whose customer sequences form a partition of the customers, $\bigcup_{\mathbf{r} \in \mathcal{S}} \mathbf{r}_{\mathcal{C}} = \mathcal{C}$ with pairwise disjoint customer sets, while minimizing the sum of the route durations $\sum_{\mathbf{r} \in \mathcal{S}} \Delta_{\mathbf{r}}^{*}$ \cite{visserEfficientMoveEvaluations2020}. Since every route serves a customer, $\left| \mathcal{S} \right|$ counts non-empty routes only, which is also what the solver and the fleet objective below count.

\paragraph{FleetCostDuration.} Delivery-oriented settings often price the fleet itself alongside driver time. Given a fixed per-route cost $c_{\mathrm{fleet}} \in \mathbb{R}_{\geq 0}$, the \textit{FleetCostDuration} objective minimizes $c_{\mathrm{fleet}} \left| \mathcal{S} \right| + \sum_{\mathbf{r} \in \mathcal{S}} \Delta_{\mathbf{r}}^{*}$ over the same feasible solutions $\mathcal{S} \subseteq \Omega$. It is the objective under which \citeauthor{blauthVehicleRoutingTimedependent2024}~\cite{blauthVehicleRoutingTimedependent2024} publish their reference solutions. A fixed cost of $c_{\mathrm{fleet}} = 36 \cdot 10^{6}$ milliseconds per used route reproduces their economic model exactly (\$200 per vehicle at \$20 per working hour, the dollar value being the millisecond cost divided by $180\,000$). Pricing the fleet changes the search problem substantially. Section \ref{subsec:kayros-fleet} presents the fleet-aware search this calls for. The two objectives are deliberately not symmetric in this paper: \textit{Duration} is the primary axis, on which the cross-solver campaign of section \ref{sec:results} and the exact component operate, while \textit{FleetCostDuration} enters the model so that the Blauth2024 results of section \ref{subsec:bks} are self-supported. The exact component does not support it. In the remainder, $z(\mathcal{S})$ denotes the value at a solution $\mathcal{S}$ of whichever of the two objectives is selected, and $z$ alone a solution cost so evaluated.

\subsection{Set-partitioning formulation and pricing}\label{subsec:spf}

The exact component of \kayros{} rests on the classic route-based Dantzig-Wolfe reformulation of the TDVRPTW \cite{feilletTutorialColumnGeneration2010, dabiaBranchPriceTimeDependent2013, lera-romeroEnhancedBranchPrice2018, lera-romeroLinearEdgeCosts2020}. Let $x_v^{\mathbf{r}} \in \{0, 1\}$ indicate that route $\mathbf{r} \in \Omega$ serves customer $v$, let $c_{\mathbf{r}}$ be the route cost ($c_{\mathbf{r}} = \Delta_{\mathbf{r}}^{*}$ under \textit{Duration}, $c_{\mathbf{r}} = c_{\mathrm{fleet}} + \Delta_{\mathbf{r}}^{*}$ under \textit{FleetCostDuration}), and let $\lambda_{\mathbf{r}} \in \{0, 1\}$ decide whether route $\mathbf{r}$ is selected. The set-partitioning master problem reads:

\begin{align}
\min \quad & \sum_{\mathbf{r} \in \Omega} c_{\mathbf{r}} \lambda_{\mathbf{r}} & \label{eq:sp-obj} \\
\text{s.t.} \quad & \sum_{\mathbf{r} \in \Omega} x_v^{\mathbf{r}} \lambda_{\mathbf{r}} = 1 & \forall v \in \mathcal{C} \label{eq:sp-partition} \\
& \sum_{\mathbf{r} \in \Omega} \lambda_{\mathbf{r}} \leq K & \label{eq:sp-fleet} \\
& \lambda_{\mathbf{r}} \in \{0, 1\} & \forall \mathbf{r} \in \Omega \label{eq:sp-integrality}
\end{align}

Since $\Omega$ grows factorially with $n$, the LP relaxation of \eqref{eq:sp-obj}--\eqref{eq:sp-integrality} is solved by column generation over a restricted route pool $\Omega_1 \subset \Omega$, giving the Restricted Master Problem. Writing $\pi_v$ for the dual values of the partition constraints \eqref{eq:sp-partition} and $\mu \leq 0$ for the dual of the fleet bound \eqref{eq:sp-fleet}, the pricing problem searches for a feasible route of minimum reduced cost $\bar{c}_{\mathbf{r}} = c_{\mathbf{r}} - \sum_{v \in \mathcal{C}} x_v^{\mathbf{r}} \pi_v - \mu$. Any route with $\bar{c}_{\mathbf{r}} < 0$ may enter the pool, and the relaxation is solved when none exists. In the time-dependent setting, this pricing problem is the TDESPPRC of section \ref{subsec:exact-methods}, which is NP-hard \cite{dror1994note}. It is solved by a labeling dynamic program whose states carry the very ready-time functions formalized below: this is where the function taxonomy and the composition algorithm of this section do their computational work \cite{dabiaBranchPriceTimeDependent2013, lera-romeroLinearEdgeCosts2020}. Integrality is restored by branching, with cutting planes strengthening the relaxation (Branch-Price-and-Cut). The \kayros{} implementation and its certificate semantics are presented in section \ref{sec:solver}.

\subsection{A taxonomy of travel-time-related functions}\label{subsec:taxonomy}

This section formalizes the travel-time-related functions underpinning any TD routing solver, from the given arc travel times up to complete route durations. Each function is presented with its signature and the properties the rest of the paper relies on, the full property and complexity statements and all proofs being collected in appendix \ref{app:taxonomy}. Figure \ref{fig:toy-route} illustrates the whole taxonomy on a toy route. Throughout the taxonomy, subscripts index families of functions (one $\tau_{ij}$ per arc $\langle i, j \rangle \in \mathcal{A}$, one $\theta_i$ per vertex $i \in \mathcal{V}$, one $\delta_{\mathbf{r}}$ per route $\mathbf{r} \in \Omega$): each signature below therefore types a single member of its family over its own domain, the index being bound by the subscript rather than passed as an argument.

\paragraph{Representation and represented object.} The literature composes Non-Decreasing Continuous PieceWise Linear Functions (NDCPWLF) \cite{visserEfficientMoveEvaluations2020, blauthVehicleRoutingTimedependent2024}. The canonical checker of section \ref{subsec:kayros-arch} must also evaluate stepwise instances whose arc travel times genuinely jump \cite{rifkiImpactSpatiotemporalGranularity2020}, so the objects it manipulates are not continuous. The first task of this section is therefore to separate the stored representation from the single-valued function it stands for. A \emph{chain} is a finite sequence $\beta = ((x_1, y_1), \dots, (x_\phi, y_\phi)) \in (\mathbb{R}^2)^\phi$ of $\phi \in \mathbb{N}_{\geq 0}$ points with $x_k \leq x_{k+1}$, $y_k \leq y_{k+1}$ and no two consecutive points equal, its length $\phi$ being its \emph{breakpoint count}. Every non-decreasing function $f$ of the taxonomy is stored as a chain $\beta_f$ of $\phi_f$ breakpoints, i.e. $2 \phi_f$ scalars, while its two non-monotone members, the arc TTF and the route duration function, are read off the corresponding ready-time chains rather than stored. A maximal group of consecutive points sharing an abscissa $t_0$, with ordinates $y_{k_0} < \dots < y_{k_1}$, is a \emph{vertical run} with \emph{jump abscissa} $t_0$ and \emph{connector} $\{t_0\} \times [y_{k_0}, y_{k_1}]$. The chain with $\phi = 0$ is the \emph{empty chain}, written $\bot$.

\begin{definition}[Selected function]\label{def:selected}
The \emph{selected function} $\mathsf{s}(\beta)$ of a non-empty chain is the function on $[x_1, x_\phi]$ that takes, at an abscissa $t = x_k$ of the chain, the \emph{smallest} ordinate carried at $t$, and, at any other $t$, the linear interpolation between the two consecutive chain points enclosing $t$. The sans-serif face is deliberate: $\mathsf{s}$ is the one named operator of this paper, and the lowercase letter is otherwise the service time $s_v$. The empty chain selects the empty partial function, $\mathsf{s}(\bot) = \bot$. A chain $\beta$ \emph{represents} $f$ when $\mathsf{s}(\beta) = f$.
\end{definition}

The connector of a vertical run is part of the representation, not of the graph of the represented function: at a jump abscissa the function takes the bottom of the connector. The values strictly above it are taken only after $t_0$, or never when the run is the last group of the chain (a \emph{trailing vertical}). This is the evaluation rule of the canonical checker, and it makes the represented functions left-continuous.

\begin{definition}[The function classes]\label{def:ndlcf}
A function $h : [a_h, u_h] \to \mathbb{R}$, with $a_h, u_h \in \mathbb{R}$ and $a_h \leq u_h$, belongs to the class $\mathcal{F}$ of left-continuous PWL functions if it is left-continuous at every $t \in (a_h, u_h]$ and affine on each of finitely many open pieces cutting $[a_h, u_h]$. We write $\mathcal{F}_{\text{ND}} \subset \mathcal{F}$ for the non-decreasing members, the Non-Decreasing Left-Continuous PieceWise Linear Functions (NDLCF, shortened from NDLCPWLF so as not to read as a variant of NDCPWLF), and $\mathcal{F}_{\text{ND}}^{\text{c}} \subset \mathcal{F}_{\text{ND}}$ for the continuous ones, the Non-Decreasing Continuous PieceWise Linear Functions (NDCPWLF). We write $\dom(h) = [a_h, u_h]$ and $h(t^+)$ for the right limit at $t < u_h$. The empty partial function $\bot$ is adjoined to all three classes as an absorbing element of composition. Routing times and durations remain non-negative.
\end{definition}

Chains represent exactly the NDLCFs (lemma \ref{lemma:chains} in appendix \ref{app:taxonomy}), but not uniquely: a chain may carry \emph{redundant} points, lying inside a straight run (a vertical run included), and a trailing vertical, all invisible to $\mathsf{s}$. We deliberately do \emph{not} make any normal form a class invariant. The removal of redundant points is an explicit, optional operation whose floating-point semantics section \ref{subsec:composition} discusses, and the canonical cost semantics keeps compositions un-normalized. Breakpoint counts below are chain lengths, duplicate abscissae counted and no normalization assumed. This counting convention is coarser than those of \cite{visserEfficientMoveEvaluations2020, blauthVehicleRoutingTimedependent2024}, and it is all the complexity statements need. An NDCPWLF is an NDLCF with no jump, and its canonical chain has no vertical run. The arc TTF and the route duration function belong to $\mathcal{F}$ but need not belong to $\mathcal{F}_{\text{ND}}$, since neither is required to be non-decreasing.

\paragraph{Composition.} For $f, g \in \mathcal{F}_{\text{ND}}$, the composition $f \circ g$ is the pointwise composition of partial functions, with $\dom(f \circ g) = \{t \in \dom(g) : g(t) \in \dom(f)\}$ and $(f \circ g)(t) = f(g(t))$. It is associative by definition, whatever the representations. The whole route algebra rests on one hypothesis, which every composition performed by the checker and the solver satisfies (proposition \ref{prop:taxonomy}):

\begin{equation}\label{eq:H}
\text{(H)}\qquad a_f \leq g(a_g),
\end{equation}

\noindent the domain of the outer function does not start strictly above the first value of the inner one. Since $g$ is non-decreasing, (H) ensures $g(t) \geq a_f$ throughout $\dom(g)$, leaving only the upper bound of $\dom(f)$ to restrict the composition domain. It also keeps the outer domain away from the connectors of the inner chain, which carry values the inner function never takes: an outer domain starting inside such a connector makes the pointwise domain left-open at the jump, or empty. In that situation, a merge that reads the connector reports the jump abscissa as feasible. (H) is sufficient and not necessary. It is nevertheless a property of the functions rather than of one representation, holds for every chain representing them, and is closed along a fold (theorem \ref{theorem:closure}).

\begin{theorem}[Closure under (H)]\label{theorem:closure}
Let $f, g \in \mathcal{F}_{\text{ND}}$ be non-empty, with $\dom(f) = [a_f, u_f]$ and $\dom(g) = [a_g, u_g]$, and satisfy (H). Then $f \circ g$ is empty if and only if $g(a_g) > u_f$, and otherwise $\dom(f \circ g) = [a_g, u_{f \circ g}]$ with $u_{f \circ g} = \max\{t \in \dom(g) : g(t) \leq u_f\}$, $f \circ g \in \mathcal{F}_{\text{ND}}$, and $a_{f \circ g} = a_g$, so that (H) for $(f \circ g) \circ g'$ is (H) for $g \circ g'$.
\end{theorem}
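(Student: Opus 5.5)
The plan is to work pointwise with the partial-function definition of $f \circ g$, using only monotonicity, left-continuity and the finite piece structure from definition \ref{def:ndlcf}. Chains play no role here; the closure is a statement about functions.

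\textbf{Step 1: the domain is an interval anchored at $a_g$.} Since $g$ is non-decreasing, (H) gives $g(t) \geq g(a_g) \geq a_f$ for every $t \in \dom(g)$. Hence $\dom(f \circ g) = \{t \in [a_g, u_g] : g(t) \leq u_f\}$, because the lower constraint $g(t) \geq a_f$ is automatic. By monotonicity of $g$, this set is a down-set of $[a_g, u_g]$.
\begin{itemize}
\item It is empty exactly when $a_g$ fails the condition, i.e.\ $g(a_g) > u_f$.
\item Otherwise it contains $a_g$ and equals $[a_g, u^\star)$ or $[a_g, u^\star]$, where $u^\star$ is its supremum.
\end{itemize}

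\textbf{Step 2: the supremum is attained.} This uses left-continuity of $g$. If $u^\star > a_g$, take $t_n \uparrow u^\star$ inside the set. Then $g(u^\star) = \lim g(t_n) \leq u_f$, so $u^\star$ belongs to the set and is a maximum. This yields the formula for $u_{f \circ g}$ and the claim $a_{f \circ g} = a_g$.

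This step is exactly where the left-open pathology mentioned after (H) would otherwise enter. (H) excludes the case where the lower bound $a_f$ cuts a jump of $g$ from above, which would make the domain left-open.

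\textbf{Step 3: $f \circ g$ lies in $\mathcal{F}_{\text{ND}}$.}

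Monotonicity is immediate, since both $f$ and $g$ are non-decreasing.

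For left-continuity at $t \in (a_g, u_{f\circ g}]$, let $s \uparrow t$. By monotonicity and left-continuity of $g$, $g(s) \uparrow g(t)$ with $g(s) \leq g(t)$. There are two cases.
\begin{itemize}
\item If $g(s) = g(t)$ for $s$ close to $t$, the composition is locally constant on the left, so it is trivially left-continuous.
\item Otherwise $g(s) < g(t)$, so $g(t) > a_f$ and $g(t) \in (a_f, u_f]$. Left-continuity of $f$ at $g(t)$ then gives $f(g(s)) \to f(g(t))$.
\end{itemize}

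For the piecewise-linear structure, take the common refinement of the breakpoints of $g$ with the preimages under $g$ of the breakpoints of $f$. On each affine piece of $g$, the preimage of a single value is either a point (when the piece is strictly increasing) or the whole piece (when it is constant). Hence the refinement is finite. On each resulting open piece, $g$ is affine and either maps into one open piece of $f$ or is constant. In both cases $f \circ g$ is affine there.

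\textbf{Step 4: propagation of (H).} The final claim follows from $a_{f\circ g} = a_g$, since (H) for the pair $(f \circ g, g')$ reads $a_{f\circ g} \leq g'(a_{g'})$, which is (H) for $(g, g')$.

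\textbf{Expected obstacle.} I expect the main difficulty to be the careful handling in Steps 2 and 3 of jumps of $g$ that land exactly on a breakpoint or on the boundary $a_f$ of $f$. The piece-counting at plateaus of $g$ sitting on a breakpoint of $f$ also needs care. For these I would argue by the case split above rather than via chains.
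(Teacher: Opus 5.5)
Your proposal is correct and follows essentially the paper's own proof: (H) reduces the composition domain to the initial segment $\{t \in [a_g, u_g] : g(t) \leq u_f\}$, left-continuity of $g$ closes it on the right, left-continuity of $f \circ g$ comes from the same two-case split, and the piecewise-linear structure comes from a finite cut set built from the breakpoints of $g$ and the preimages of the breakpoints of $f$. One correction, to your commentary only: Step 2 uses left-continuity of $g$ alone, whereas the left-open pathology mentioned after (H) sits at the lower end of the domain, where Step 1 has already removed it because (H) makes the constraint $g(t) \geq a_f$ automatic.
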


\begin{theorem}[Exact composition]\label{theorem:merge}
Let $f, g \in \mathcal{F}_{\text{ND}}$ be non-empty and satisfy (H), and let $\beta_f$, $\beta_g$ be any chains representing them. Algorithm \ref{algo:compose_NDLCF} of section \ref{subsec:composition} returns $\bot$ if and only if $f \circ g$ is empty, and otherwise a chain representing $f \circ g$, of at most $\phi_f + \phi_g$ breakpoints, in $\mathcal{O}(\phi_f + \phi_g)$ operations. The returned chain is the canonical chain of $f \circ g$ up to redundant points and a trailing vertical.
\end{theorem}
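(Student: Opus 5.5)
The plan is to treat Algorithm \ref{algo:compose_NDLCF} as a two-pointer event merge. One pointer walks the chain $\beta_g$ in abscissa order. The other walks $\beta_f$ in abscissa order, which is the order of the \emph{ordinates} of $g$. I will prove correctness through a loop invariant, and read emptiness and the domain off theorem \ref{theorem:closure} instead of re-deriving them.

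\textbf{Emptiness and domain.} By theorem \ref{theorem:closure}, $f \circ g = \bot$ if and only if $g(a_g) > u_f$. I will check that the algorithm's initial test compares exactly these quantities. The value $g(a_g)$ is the smallest ordinate at $x^g_1$ (definition \ref{def:selected}), and $u_f$ is the last abscissa of $\beta_f$. So the test returns $\bot$ precisely in this case, and the test is independent of the chosen representations.

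\textbf{Invariant and event types.} In the non-empty case, (H) lets the merge start at $(a_g, f(g(a_g)))$ with the $f$-pointer placed at the piece of $\beta_f$ containing $g(a_g)$. The invariant I will state is this: after processing all events up to abscissa $t$, the emitted prefix is a chain whose selected function coincides with $f \circ g$ on $[a_g, t]$, and the two pointers bracket $t$ and $g(t)$. An event is one of two kinds:
\begin{enumerate}
\item a breakpoint $x^g_k$ of $g$, which emits $(x^g_k, f(g(x^g_k)))$;
\item a breakpoint $x^f_j$ of $f$ crossed by $g$ on an affine piece, which emits $(g^{-1}(x^f_j), y^f_j)$ computed by interpolation.
\end{enumerate}
Between consecutive events, $g$ is affine and $g(t)$ stays inside one affine piece of $f$. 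So $f \circ g$ is affine there, and linear interpolation of the emitted endpoints is exact. This gives the invariant on open pieces. Left-continuity at event abscissae then follows because every emitted point's first ordinate is the bottom value, which matches the minimum-ordinate rule of $\mathsf{s}$.

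\textbf{Edge cases (the main obstacle).} The hardest step is the set of degenerate cases, because there the naive merge can emit points whose selected function is wrong. I will handle each one.
\begin{enumerate}
\item \emph{Vertical run of $g$ at $t_0$.} The image jumps from $g(t_0)$ to $g(t_0^+)$. The composition must jump from $f(g(t_0))$ to $f(g(t_0^+))$, so every $f$-breakpoint strictly inside the connector is skipped, or emitted only as redundant vertical points at $t_0$. By (H) and theorem \ref{theorem:closure}, no connector value below $a_f$ is ever read.
\item \emph{Vertical run of $f$ at $x^f_j$.} It is emitted at the preimage abscissa as a vertical run of the output.
\item \emph{Plateau of $g$ at level $x^f_j$.} When $g$ is constant at a jump abscissa of $f$, the output is constant at the bottom value of $f$'s run along the plateau, and the jump occurs only when $g$ leaves the plateau.
\item \emph{Tie: vertical run of $g$ landing on a plateau or jump of $f$.} I will argue via one-sided limits $h(t^+)$ that the emitted sequence still selects the correct value.
\item \emph{Boundary ties at $u_f$.} Truncation happens at $u_{f \circ g} = \max\{t : g(t) \leq u_f\}$. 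The only possible artifact is a trailing vertical, which is invisible to $\mathsf{s}$.
\end{enumerate}

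\textbf{Counting and canonicity.} Each event advances at least one pointer and emits at most one point, which gives both the $\phi_f + \phi_g$ length bound and $\mathcal{O}(\phi_f + \phi_g)$ operations. Canonicity up to redundant points and a trailing vertical follows by comparing the emitted chain with the canonical chain of $f \circ g$ given by lemma \ref{lemma:chains}. Every canonical breakpoint of $f \circ g$ sits at an image or preimage breakpoint, hence at an event. Conversely, any emitted point that is not canonical lies on a straight run or forms a trailing vertical.
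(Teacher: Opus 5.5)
Your emptiness step is fine: under (H), $\mathit{lo}=\max(x^f_1,y^g_1)=y^g_1=g(a_g)$, so $\mathit{lo}>\mathit{hi}$ reduces to $g(a_g)>u_f$, which is theorem~\ref{theorem:closure}. The gap is in everything after it. Your plan proves correctness of a merge you describe, not of Algorithm~\ref{algo:compose_NDLCF}. The listing does not advance two pointers emitting one point per event. It sweeps the distinct \emph{values} $\mathit{ev}$ of the band, and at each one it gathers all points of $\beta_f$ with abscissa $\mathit{ev}$ (the list $\mathit{ys}$) and all points of $\beta_g$ with ordinate $\mathit{ev}$ (the list $\mathit{ts}$). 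It then emits the bottom edge $\mathit{ts}\times\{\mathit{ys}_1\}$ followed by the right edge $\{\mathit{ts}_{\mathrm{last}}\}\times(\mathit{ys}_2,\dots)$.

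Correctness rests on that emission order, and your plan is missing the step that uses it. Writing $h=f\circ g$, you need that at every output abscissa $t<u_{f\circ g}$ the \emph{largest} emitted ordinate is $h(t^+)$ and no later event emits at $t$. You also need the smallest to be $h(t)$, which requires that the event $g(t)$ emits at $t$ and every other event emitting there has $\mathit{ev}\ge g(t)$. Your open-piece claim that ``linear interpolation of the emitted endpoints is exact'' depends on exactly this, because $\mathsf{s}$ interpolates from the last point at $t$ to the first point at the next abscissa.

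Your edge cases 1, 3 and 4 state what $h$ does at a jump or along a plateau, but never check which points the listing emits there. The double tie is precisely where a pairwise two-pointer merge of the kind you describe fails. Take $g\equiv v$ on $[t_1,t_2]$ with $f$ jumping at $v$. Pairing $(t_1,v)$ with $(v,f(v))$, then $(t_2,v)$ with $(v,f(v^+))$, emits $(t_1,f(v)),(t_2,f(v^+))$, and $\mathsf{s}$ then interpolates diagonally where $h\equiv f(v)$.

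Two related points:
\begin{itemize}
\item Your event-1 rule $(x^g_k,f(g(x^g_k)))$ is wrong on the upper points of a vertical run of $g$; the listing emits $f(y^g_k)$ there.
\item An invariant indexed by ``all events up to abscissa $t$'' is ill-posed. Each ordinate of a vertical run of $g$ is a separate event at the same abscissa, while a single event covers a whole plateau.
\end{itemize}

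The counting step is also false for the listing. An event with $c_f$ points of $\beta_f$ and $c_g$ points of $\beta_g$ emits $c_f+c_g-1$ points, not at most one. The bound $\phi_f+\phi_g$ holds because that event consumes $c_f+c_g$ input points. An event with one side interpolated emits $\max(c_f,c_g)$ points while consuming as many.

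Your claim that ``truncation happens at $u_{f\circ g}$'' also needs an argument. Every emitted $t$ satisfies $g(t)\le\mathit{ev}\le\mathit{hi}\le u_f$, so nothing beyond $u_{f\circ g}$ is emitted. You must also show that $u_{f\circ g}$ is itself emitted. When $g$ is continuous at $u_{f\circ g}<u_g$, this holds because $g(u_{f\circ g})=u_f$ is an event value, the last abscissa of $\beta_f$, admitted by the closed band.

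Finally, once representation is proved, the canonical-chain statement follows immediately from lemma~\ref{lemma:chains}, so your separate breakpoint-by-breakpoint comparison is unnecessary.
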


Both theorems are proved in appendix \ref{app:taxonomy}. The image-domain intersection that the continuous theorem of \citeauthor{visserEfficientMoveEvaluations2020} \cite{visserEfficientMoveEvaluations2020} uses as a non-emptiness test reads, on chains, $\max(a_f, y^g_1) \leq \min(u_f, y^g_{\phi_g})$, and it is exactly the test algorithm \ref{algo:compose_NDLCF} performs. Under (H) it reduces to $g(a_g) \leq u_f$. Without (H) it is not an attained-value test: on the smallest example appendix \ref{app:taxonomy-proofs} gives, the merge returns a non-empty chain for an empty composition, and two bracketings of one chain even disagree. The subclasses actually composed by route evaluation satisfy (H) for every pair, which lets the solver of section \ref{sec:solver} evaluate any bracketing by the event merge:

\begin{proposition}[Temporal maps]\label{prop:temporal}
For an anchor $a \in \mathbb{R}$, let $\mathcal{F}^{\text{out}}_a$ be the NDLCFs with $f(t) \geq t$ and $a_f \leq a$ (outer maps), $\mathcal{F}^{\text{in}}_a$ those with $g(t) \geq t$ and $a_g \geq a$ (inner maps), and $\mathcal{F}^{\text{anc}}_a = \mathcal{F}^{\text{out}}_a \cap \mathcal{F}^{\text{in}}_a$ (anchored maps), $\bot$ belonging to all three. If $f \in \mathcal{F}^{\text{out}}_a$ and $g \in \mathcal{F}^{\text{in}}_a$ then (H) holds and $f \circ g \in \mathcal{F}^{\text{in}}_a$, with $f \circ g \in \mathcal{F}^{\text{anc}}_a$ when $g \in \mathcal{F}^{\text{anc}}_a$. Consequently, for $f_1 \in \mathcal{F}^{\text{in}}_a$ and $f_2, \dots, f_k \in \mathcal{F}^{\text{anc}}_a$, every bracketing of $f_k \circ \dots \circ f_1$ satisfies (H) at each internal composition, all bracketings are one and the same partial function, and algorithm \ref{algo:compose_NDLCF} evaluated along any of them returns a chain representing it.
\end{proposition}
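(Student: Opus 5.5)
The plan is to reduce everything to theorem \ref{theorem:closure} and theorem \ref{theorem:merge}, using two pointwise inequalities: $f(t) \geq t$ and $g(t) \geq t$. The only real work is a domain bookkeeping argument.

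\textbf{First step: (H) holds.} Take $f \in \mathcal{F}^{\text{out}}_a$ and $g \in \mathcal{F}^{\text{in}}_a$, both non-empty (if either is $\bot$ the claims are trivial, since $\bot$ is absorbing and belongs to every class). Then
\[
g(a_g) \geq a_g \geq a \geq a_f,
\]
which is exactly (H).

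\textbf{Second step: $f \circ g \in \mathcal{F}^{\text{in}}_a$.} Theorem \ref{theorem:closure} now applies. Either $f \circ g = \bot \in \mathcal{F}^{\text{in}}_a$, or $f \circ g \in \mathcal{F}_{\text{ND}}$ with $a_{f \circ g} = a_g \geq a$. On its domain,
\[
f(g(t)) \geq g(t) \geq t,
\]
so $f \circ g$ is an inner map. If moreover $g \in \mathcal{F}^{\text{anc}}_a$, then $a_g \leq a$, so $a_{f\circ g} = a_g \leq a$. Combined with $f(g(t)) \geq t$, this makes $f \circ g$ an outer map as well, hence anchored.

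\textbf{Third step: every bracketing.} Take $f_1 \in \mathcal{F}^{\text{in}}_a$ and $f_2,\dots,f_k \in \mathcal{F}^{\text{anc}}_a$. I would prove by strong induction on the number of factors that each sub-bracketing of a contiguous block $f_j \circ \dots \circ f_i$ evaluates to a member of $\mathcal{F}^{\text{in}}_a$ when $i = 1$, and of $\mathcal{F}^{\text{anc}}_a$ when $i \geq 2$. The top composition of a block splits it into an outer block $f_j \circ \dots \circ f_{l+1}$ with $l+1 \geq 2$, which is anchored and hence outer by induction, and an inner block $f_l \circ \dots \circ f_i$, which is inner, or anchored if $i \geq 2$. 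The second step then gives (H) at that internal node together with the claimed class of the result.

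That all bracketings are one partial function needs no hypothesis: pointwise composition of partial functions is associative, as already noted in the paragraph on composition. The point to check is that this is the semantic object, so no interference from representations arises. Finally, theorem \ref{theorem:merge} applies at every internal node, since (H) holds there and both operands are non-empty NDLCFs, or one of them is $\bot$ and the algorithm returns $\bot$. The node's output is therefore a chain representing the node's semantic value, and this value is the input of the next merge. Induction along the tree yields a chain representing $f_k \circ \dots \circ f_1$.

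\textbf{Expected obstacle.} The step I expect to be delicate is the empty case inside the induction. If a sub-bracketing is empty, the merge must return $\bot$ and not a spurious chain. This is guaranteed by the "if and only if" of theorem \ref{theorem:merge}, provided that algorithm \ref{algo:compose_NDLCF} treats a $\bot$ operand as absorbing. I would state that convention explicitly. I would also check that theorem \ref{theorem:merge} only requires the input chains to represent the functions, which the induction preserves; canonicity of the chains is not needed.
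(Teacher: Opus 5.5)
Your proposal is correct and follows essentially the same route as the paper's proof: (H) from $g(a_g) \geq a_g \geq a \geq a_f$, closure via theorem \ref{theorem:closure} with $a_{f \circ g} = a_g$, an induction on the bracketing tree in which the block containing $f_1$ is always the inner operand, associativity of partial-function composition for the equality of bracketings, and theorem \ref{theorem:merge} at each node. The absorbing convention for $\bot$ that you propose to state explicitly is already the first line of algorithm \ref{algo:compose_NDLCF}, and that first line is exactly how the paper disposes of the empty case.
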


\begin{proposition}[The taxonomy is temporal]\label{prop:taxonomy}
Let $a = a_{\mathrm{arc}} \geq 0$. Every customer function $\theta_i$, $i \in \mathcal{C}$, and the destination clamp $\theta_d = \mathrm{Id}_{[0, l_d]}$ belong to $\mathcal{F}^{\text{out}}_a$, every arc ATF $\alpha_{ij}$ belongs to $\mathcal{F}^{\text{anc}}_a$, and the origin identity $\theta_o = \mathrm{Id}_{[e_o, l_o] \cap \mathcal{T}_{\mathrm{arc}}}$, the depot departure, belongs to $\mathcal{F}^{\text{in}}_a$. Hence the sequential fold of the checker, which composes one outer map onto an inner accumulator at each step, satisfies (H) at every step. Every arc ready time function $\delta_{ij}$ (arc RTF, defined below) and every leaf $\mathit{lf}_k$, $k \geq 2$, of section \ref{sec:solver} is in $\mathcal{F}^{\text{anc}}_a$ while $\mathit{lf}_1$, which contains the depot departure, is in $\mathcal{F}^{\text{in}}_a$, so proposition \ref{prop:temporal} applies to every bracketing of the leaf list, which is what the composition tree and the route trees of section \ref{sec:solver} evaluate. Continuity is required nowhere.
\end{proposition}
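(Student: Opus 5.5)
The plan is to check the three membership claims directly from the definitions of $\theta_i$, $\alpha_{ij}$ and the depot identities. Everything else then follows from proposition \ref{prop:temporal}, used as a closure rule, with no further appeal to theorem \ref{theorem:closure}.

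\textbf{Membership of the basic maps.} I take the vertex RTF to be the usual wait-then-serve map, $\theta_i(t) = \max(t, e_i) + s_i$ on the arrival domain $[0, l_i]$. Arrival after $l_i$ is infeasible, and arrival times are values of $\mathcal{T}_{\mathrm{ext}} = \mathbb{R}_{\geq 0}$. This map is continuous, piecewise affine with at most two pieces, and non-decreasing, so it lies in $\mathcal{F}_{\text{ND}}$. Since $s_i \geq 0$, it satisfies $\theta_i(t) \geq t$. Its domain starts at $0 \leq a_{\mathrm{arc}} = a$, so $\theta_i \in \mathcal{F}^{\text{out}}_a$.

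The destination clamp $\mathrm{Id}_{[0, l_d]}$ is handled in the same way. If the checker materializes $\dom(\theta_i)$ with a lower endpoint other than $0$, the only thing the argument uses is that this endpoint is at most $a$, and I would check that against the data contract of section \ref{subsec:kayros-arch}.

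For the arc ATF, $\alpha_{ij}(t) = t + \tau_{ij}(t)$ on $\mathcal{T}_{\mathrm{arc}} = [a, T_{\mathrm{arc}}]$. It is left-continuous and piecewise linear because $\tau_{ij} \in \mathcal{F}$. It is non-decreasing by FIFO. It satisfies $\alpha_{ij}(t) \geq t$ because $\tau_{ij} \geq 0$. Its domain starts exactly at $a$, so both $a_{\alpha} \leq a$ and $a_{\alpha} \geq a$ hold, which gives $\alpha_{ij} \in \mathcal{F}^{\text{anc}}_a$.

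The origin identity has domain $[e_o, l_o] \cap \mathcal{T}_{\mathrm{arc}}$, whose lower endpoint is $\max(e_o, a) \geq a$. It is therefore in $\mathcal{F}^{\text{in}}_a$, or it is $\bot$ when the intersection is empty, and $\bot$ belongs to every class by convention.

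\textbf{The fold and the leaves.} The sequential fold starts from $\theta_o \in \mathcal{F}^{\text{in}}_a$. At each step it composes $\alpha_{v_{k-1}v_k}$ and then $\theta_{v_k}$ onto the accumulator, and both are in $\mathcal{F}^{\text{out}}_a$ because anchored maps are outer maps. Proposition \ref{prop:temporal} therefore gives (H) at each composition, and it returns the accumulator to $\mathcal{F}^{\text{in}}_a$. Induction on $k$ then yields (H) at every step.

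For the arc RTF, $\delta_{ij} = \theta_j \circ \alpha_{ij}$ is an outer map composed onto an anchored map, so proposition \ref{prop:temporal} places it in $\mathcal{F}^{\text{anc}}_a$.

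For the leaves, I expect section \ref{sec:solver} to define $\mathit{lf}_k$, $k \geq 2$, as an arc RTF, which is anchored. I expect $\mathit{lf}_1$ to be an arc RTF precomposed with $\theta_o$, which is anchored $\circ$ inner and hence in $\mathcal{F}^{\text{in}}_a$ by the same proposition.

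The hypotheses of the last clause of proposition \ref{prop:temporal} are then met. That clause gives the bracketing-independence statement for the leaf list, and so for the composition tree and the route trees. No step uses continuity, only left-continuity via $\tau_{ij} \in \mathcal{F}$, which justifies the closing sentence of the statement.

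\textbf{Main obstacle.} The delicate point is not the algebra but pinning the exact domains and conventions. These are the lower endpoint of $\dom(\theta_i)$, the fact that $a_{\mathrm{arc}} \geq 0$ is what lets $[0, \cdot]$-domains count as outer, and the precise definition of $\mathit{lf}_1$ given later. My first move is to fix these from the data contract and the route-folding definition. I would then verify that the one non-anchored leaf sits innermost, since proposition \ref{prop:temporal} requires $f_1$ alone to be merely inner.
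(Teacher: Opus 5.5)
Your proposal is correct and follows the paper's proof step for step. It verifies the three memberships directly: $\theta_i$ and $\theta_d$ are outer because their domains start at $0 \leq a$ and $f(t) \geq t$, $\alpha_{ij}$ is anchored because its domain is $\mathcal{T}_{\mathrm{arc}}$ and $\tau_{ij} \geq 0$, and the departure identity is inner because $\max(e_o, a) \geq a$; it then applies proposition \ref{prop:temporal} at each fold step and to the leaves. Your expectations about section \ref{sec:solver} also match the paper: $\mathit{lf}_k = \theta_{v_{k+1}} \circ \alpha_{v_k v_{k+1}}$ for $1 < k < m-1$, the return leaf $\mathit{lf}_{m-1} = \mathrm{Id}_{[0, l_d]} \circ \alpha_{v_{m-1} d}$ is the arc RTF into $d$ and is anchored since the clamp is outer, and $\mathit{lf}_1 = \theta_{v_2} \circ \alpha_{v_1 v_2} \circ \mathrm{Id}_{[e_o, l_o] \cap \mathcal{T}_{\mathrm{arc}}}$ is the single inner leaf, sitting innermost.
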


\begin{figure}[htbp]
\centering
\includegraphics[width=\textwidth]{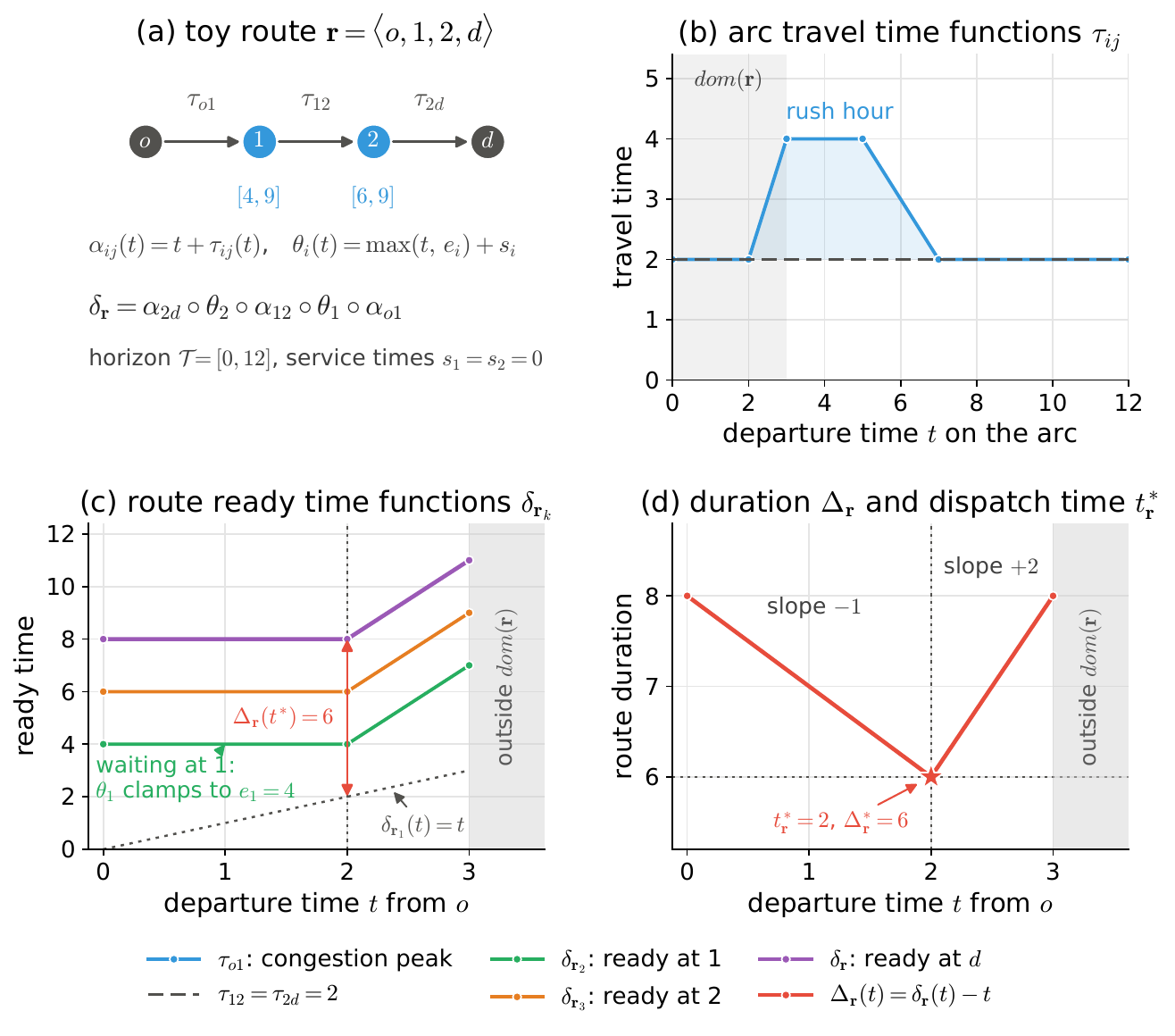}
\caption{The function taxonomy on a toy route $\mathbf{r} = \langle o, 1, 2, d \rangle$ over the horizon $\mathcal{T}_{\mathrm{arc}} = \mathcal{T} = [0, 12]$, with zero service times and TWs $[4, 9]$ at customer 1 and $[6, 9]$ at customer 2. (a) The route and the composition chain producing its route ready time function (RTF). (b) The arc TTFs $\tau_{ij}$: a rush-hour bump on $\tau_{o1}$, constant travel times elsewhere. (c) The prefix route RTFs $\delta_{\mathbf{r}_k}$ over $\dom(\mathbf{r}) = [0, 3]$: the TW at customer 1 forces waiting, and the resulting flat clamp propagates through every downstream composition. (d) The route duration function (RDF) $\Delta_{\mathbf{r}}(t) = \delta_{\mathbf{r}}(t) - t$. Its minimum over the breakpoints gives the best duration $\Delta_{\mathbf{r}}^{*} = 6$, and its earliest minimizer gives the dispatch time $t_{\mathbf{r}}^{*} = 2$. Every value is computed by the event merge of section \ref{subsec:composition}. The figure regenerates deterministically from \texttt{scripts/gen\_toy\_figure.py}.}
\label{fig:toy-route}
\end{figure}

Time-dependent solvers build on different members of the taxonomy below \cite{dabiaBranchPriceTimeDependent2013, lera-romeroLinearEdgeCosts2020, visserEfficientMoveEvaluations2020, blauthVehicleRoutingTimedependent2024}. The preprocessing of \citeauthor{lera-romeroEnhancedBranchPrice2018} \cite{lera-romeroEnhancedBranchPrice2018} can be understood as precomputing all the arc Ready Time Functions (RTF) $\delta_{ij}$ defined below. It then recomputes modified TTFs $\tau'$ from those arc RTFs, which is unnecessary and can introduce errors under \textit{Travel Time} objectives, since it incorporates waiting times before TW openings into the TTFs. \citeauthor{panHybridAlgorithmTimedependent2021} \cite{panHybridAlgorithmTimedependent2021} and \citeauthor{visserEfficientMoveEvaluations2020} \cite{visserEfficientMoveEvaluations2020} worked with route RTFs $\delta_\mathbf{r}$, the latter introducing the Balanced Binary Tree (BBT) structure $\bbt^{\mathbf{r}}$ for efficient local-search move evaluation over the neighbors of a given route $\mathbf{r}$. \citeauthor{blauthVehicleRoutingTimedependent2024} \cite{blauthVehicleRoutingTimedependent2024} focused on arc and route ATFs $\alpha$ in a similar fashion, continuous ones on a lower-unbounded domain made total on the left by an initially-constant piece. Proposition \ref{prop:continuous} in appendix \ref{app:taxonomy} replaces that device by hypothesis (H) with a common lower endpoint.

\paragraph{Arc Travel Time Function (TTF)}

\[
\begin{matrix}
\tau_{ij} : & \mathcal{T}_{\mathrm{arc}} & \rightarrow & \mathbb{R}_{\geq 0} \\
& t & \mapsto & \tau_{ij}(t)
\end{matrix}
\]

The function $\tau_{ij}$ \cite{dabiaBranchPriceTimeDependent2013, lera-romeroLinearEdgeCosts2020} representing the time it takes to traverse arc $\langle i, j \rangle \in \mathcal{A}$ when departing at $t \in \mathcal{T}_{\mathrm{arc}}$ is a left-continuous PWL function over $\mathcal{T}_{\mathrm{arc}}$ whose values are durations, hence the codomain $\mathbb{R}_{\geq 0}$ rather than a time set. It belongs to $\mathcal{F}$ but need not be non-decreasing, so membership of $\mathcal{F}_{\text{ND}}$ is not required: the property the taxonomy uses is FIFO in the strong sense, i.e. the associated arrival function $\alpha_{ij}(t) = t + \tau_{ij}(t)$ is non-decreasing, which bounds the slope of $\tau_{ij}$ below by $-1$ \cite{fontaineExactAnytimeHeuristic2024}. Its number of breakpoints is bounded by the instance-wide parameter $p \in \mathbb{N}_{>0}$. Being given by the instance, it costs $\mathcal{O}(1)$ in time and $\mathcal{O}(p)$ in memory.

\paragraph{Arc Arrival Time Function (ATF)}

\[
\begin{matrix}
\alpha_{ij} : & \mathcal{T}_{\mathrm{arc}} & \rightarrow & \mathcal{T}_{\mathrm{ext}} \\
& t & \mapsto & \alpha_{ij}(t) = t + \tau_{ij}(t)
\end{matrix}
\]

The function $\alpha_{ij}$ \cite{visserEfficientMoveEvaluations2020, blauthVehicleRoutingTimedependent2024, fontaineExactAnytimeHeuristic2024} representing the time of arrival at $j$ when departing $i$ of the arc $\langle i, j \rangle \in \mathcal{A}$ at $t \in \mathcal{T}_{\mathrm{arc}}$ is an NDLCF over $\mathcal{T}_{\mathrm{arc}}$, FIFO compliance being exactly its monotonicity. It satisfies $\alpha_{ij}(t) \geq t$, has at most $p$ breakpoints, and is computed in $\mathcal{O}(1)$ from the TTF with no memory duplicated. It is the arc member of $\mathcal{F}^{\text{anc}}_{a_{\mathrm{arc}}}$ in proposition \ref{prop:taxonomy}. Its codomain is $\mathcal{T}_{\mathrm{ext}}$ rather than $\mathcal{T}$, since late departures may arrive past the horizon end $T$. Note that the triangle inequality need not hold \cite{lera-romeroLinearEdgeCosts2020}. On stepwise instances its chain carries vertical runs at the jump instants.

\paragraph{Vertex TW Ready Time Lower Bound Function (vertex RTF)}

For a customer $i \in \mathcal{C}$,
\[
\begin{matrix}
\theta_i : & [0, l_i] & \rightarrow & [e_i + s_i, l_i + s_i] \\
& t & \mapsto &
\theta_i(t) = \begin{cases}
e_i + s_i & \text{if } t < e_i \\
t + s_i & \text{if } t \in [e_i, l_i]
\end{cases}
\end{matrix}
\]

The function $\theta_i$ \cite{visserEfficientMoveEvaluations2020} is non-decreasing by construction and encodes both the time window and the service duration at a customer $i \in \mathcal{C}$: a flat waiting piece when $e_i > 0$, then a slope-one piece, giving one to three breakpoints in all\footnote{Three when $0 < e_i < l_i$, two when $e_i = 0 < l_i$ or $0 < e_i = l_i$, and one in the degenerate single-instant window $e_i = l_i = 0$, where the function is the single point $(0, s_i)$. The degenerate case changes neither the constant-time nor the constant-memory conclusion. Appendix \ref{app:taxonomy} gives the case analysis.}. It is stored as the time window itself, in constant time and memory. Its domain starts at $0 \leq a_{\mathrm{arc}}$ and it satisfies $\theta_i(t) \geq t$, so it is an outer map of $\mathcal{F}^{\text{out}}_{a_{\mathrm{arc}}}$ in proposition \ref{prop:taxonomy}. At the two depots the two-case definition above is deliberately not instantiated: it would clamp an early ready time to the window start, whereas the depot conventions of section \ref{subsec:dm-def} let the depot windows act through domain restriction only. Both depot maps are identities. At the destination, $\theta_d = \mathrm{Id}_{[0, l_d]}$ (the route ends upon arrival, no later than $l_d$), also an outer map. At the origin, $\theta_o = \mathrm{Id}_{[e_o, l_o] \cap \mathcal{T}_{\mathrm{arc}}}$, the identity on the feasible departure window, which is exactly the base case $\delta_{\mathbf{p}_1}$ of the path recursion and the innermost leaf of the route folding of section \ref{subsec:kayros-folding} (its domain starts at $\max(e_o, a_{\mathrm{arc}}) \geq a_{\mathrm{arc}}$, so it sits in the inner position of proposition \ref{prop:taxonomy}, not the outer one).

\paragraph{Arc Ready Time Function (arc RTF)}

\[
\begin{matrix}
\delta_{ij} : & \dom(\langle i, j \rangle) & \rightarrow & [e_j + s_j, l_j + s_j] \\
& t & \mapsto & \delta_{ij}(t) = (\theta_j \circ \alpha_{ij})(t)
\end{matrix}
\]

The pair $(\theta_j, \alpha_{ij})$ satisfies (H) since $a_{\theta_j} = 0 \leq a_{\mathrm{arc}} \leq \alpha_{ij}(a_{\mathrm{arc}})$, so by theorem \ref{theorem:closure} $\delta_{ij}$ is an NDLCF over $\dom(\langle i, j \rangle) = [a_{\mathrm{arc}}, u_{\delta_{ij}}]$, the largest initial segment of $\mathcal{T}_{\mathrm{arc}}$ on which the arrival at $j$ respects its deadline $l_j$. The definition of \citeauthor{visserEfficientMoveEvaluations2020} \cite{visserEfficientMoveEvaluations2020} leaves that domain restriction implicit. The domain is empty exactly when $\alpha_{ij}(a_{\mathrm{arc}}) > l_j$, in which case the arc can never be traversed feasibly and it is removed in preprocessing. Each arc RTF has $\mathcal{O}(p)$ breakpoints, so precomputing all of them costs $\mathcal{O}(n^2 p)$ in time and memory.

\paragraph{Route Ready Time Function (route RTF)} Instantiating the path recursion of section \ref{subsec:dm-def} on a route $\mathbf{r} = \langle v_1 = o, \dots, v_m = d \rangle \in \Omega$ gives the prefix functions $\delta_{\mathbf{r}_k} = \delta^{\mathbf{r}}_{v_1 v_k}$ and the route ready time function of the complete route:

\[
\begin{matrix}
\delta_{\mathbf{r}} : & \dom(\mathbf{r}) & \rightarrow & [0, l_d] \\
& t & \mapsto & \delta_{\mathbf{r}}(t) = \delta_{\mathbf{r}_m}(t) = \delta_{od}^{\mathbf{r}}(t)
\end{matrix}
\]

By propositions \ref{prop:temporal} and \ref{prop:taxonomy}, each prefix function $\delta_{\mathbf{r}_k}$ \cite{dabiaBranchPriceTimeDependent2013, lera-romeroLinearEdgeCosts2020, visserEfficientMoveEvaluations2020} is an NDLCF of $\mathcal{F}^{\text{in}}_{a_{\mathrm{arc}}}$. The route RTF $\delta_{\mathbf{r}}$ is an NDLCF over its recursively computed domain $\dom(\mathbf{r}) \subseteq [e_o, l_o] \cap \mathcal{T}_{\mathrm{arc}}$, non-empty exactly when $\mathbf{r}$ is feasible (section \ref{subsec:dm-def}), and its values are arrival times at $d$, hence at most $l_d$. Since all bracketings of a temporal chain coincide, $\delta_{\mathbf{r}}$ is also the chain $(\delta_{v_{m-1} d} \circ \dots \circ \delta_{o v_2})(t)$ of arc RTFs restricted to the departure window. Following \citeauthor{visserEfficientMoveEvaluations2020} \cite{visserEfficientMoveEvaluations2020} and \citeauthor{blauthVehicleRoutingTimedependent2024} \cite{blauthVehicleRoutingTimedependent2024}, that chain is evaluated through a BBT $\bbt^{\mathbf{r}}$ whose $m - 1$ leaves are the arc RTFs and whose root is $\delta_{\mathbf{r}}$, over $\mathit{BBTL} = \left\lceil \log_2(m - 1) \right\rceil + 1$ levels, following the literature recursion:

\[
\delta_{v_i v_j}^{\mathbf{r}} = \begin{cases}
\delta_{v_i v_j}(t) & \text{if } j = i + 1 \text{ (at level } \ell = 1 \text{)} \\
(\delta_{v_k v_j}^{\mathbf{r}} \circ \delta_{v_i v_k}^{\mathbf{r}})(t) & \text{otherwise (for some } i < k < j \text{, at level } \ell \in [2, \mathit{BBTL}]_\mathbb{N} \text{)}
\end{cases}
\]

Every node of the tree is in $\mathcal{F}^{\text{anc}}_{a_{\mathrm{arc}}}$, so (H) holds at every internal composition and theorem \ref{theorem:merge} applies at each node. The complexity of computing this structure, and hence $\delta_\mathbf{r}$, is given by the following theorem, proved in appendix \ref{app:taxonomy}.

\begin{theorem}[BBT-based calculation of route RTFs]\label{theorem:comp_of_rrtf}
Let $\mathbf{p} \in \mathcal{P}_{\mathcal{V}}$ be a path with $m = \left| \mathbf{p} \right| \in \mathbb{N}_{\geq 2}$ vertices, whose $m - 1$ arc RTFs have $\mathcal{O}(p)$ breakpoints each. The path RTF $\delta_{\mathbf{p}}$ can be computed in $\mathcal{O}(m p \log_2(m))$ time and memory by building the BBT $\bbt^{\mathbf{p}}$, and the resulting representation of $\delta_{\mathbf{p}}$ has $\mathcal{O}(m p)$ breakpoints.
\end{theorem}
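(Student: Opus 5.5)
The proof is a level-by-level accounting of the balanced binary tree $\bbt^{\mathbf{p}}$, with theorem \ref{theorem:merge} as the only per-node tool. Correctness is not an issue. Every node of the tree composes two contiguous segment functions. Propositions \ref{prop:temporal} and \ref{prop:taxonomy} guarantee that (H) holds at every internal composition and that every bracketing of the leaf list yields the same partial function. Hence the root of any balanced bracketing represents $\delta_{\mathbf{p}}$, and theorem \ref{theorem:merge} applies at each node. The whole proof is therefore an accounting of breakpoints and operations.

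\textbf{Step 1: breakpoint sizes.} By induction on the level $\ell$, a node covering the contiguous segment of leaves $i, \dots, j-1$ has a chain of at most the summed breakpoint counts of those leaves. The base case is the bound $\phi_f + \phi_g$ of theorem \ref{theorem:merge}. Since each leaf has at most $c\,p$ breakpoints, the root has at most $c\,p\,(m-1) = \mathcal{O}(mp)$ breakpoints, which is the last claim of the statement. No normalization is needed, which matches the un-normalized counting convention.

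\textbf{Step 2: time per level.} At a fixed level $\ell \geq 2$, the nodes cover pairwise disjoint segments of the leaf list. By theorem \ref{theorem:merge}, a node costs $\mathcal{O}(\phi_{\text{left}} + \phi_{\text{right}})$, and by Step 1 this is $\mathcal{O}(p \cdot \#\text{leaves under the node})$. Summing over the nodes of one level gives $\mathcal{O}(p(m-1))$. With $\mathit{BBTL} = \lceil \log_2(m-1)\rceil + 1$ levels, the total time is $\mathcal{O}(mp\log_2 m)$. The degenerate case $m = 2$, where $\log_2 m = 1$, costs $\mathcal{O}(p)$ for the single leaf and fits the bound. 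The leaves themselves are the precomputed arc RTFs, or are obtained in $\mathcal{O}(p)$ each by one composition $\theta_j \circ \alpha_{ij}$, so they add only $\mathcal{O}(mp)$.

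\textbf{Step 3: memory.} Every node chain is stored, so the same level-wise sum bounds memory by $\mathcal{O}(mp)$ per level and $\mathcal{O}(mp\log_2 m)$ in total.

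The main obstacle is Step 1 in the presence of $\bot$ and domain truncation. Once some internal node returns the empty chain, its ancestors must stay within the bound, and theorem \ref{theorem:merge} must still be applicable. I would handle this by the absorbing convention for $\bot$: a composition with an empty operand is returned in $\mathcal{O}(1)$ as $\bot$, which only lowers the counts. I would also check that a truncated domain never increases the chain length beyond $\phi_f + \phi_g$, which is exactly what theorem \ref{theorem:merge} asserts for any representing chains.
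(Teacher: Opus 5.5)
Your proposal is correct and follows essentially the same route as the paper: a level-by-level accounting of $\bbt^{\mathbf{p}}$ in which repeated use of the bound $\phi_f + \phi_g$ of theorem \ref{theorem:merge} makes each node's size proportional to the number of arcs it covers. This yields $\mathcal{O}(mp)$ time and memory per level, $\mathcal{O}(\log_2 m)$ levels, and $\mathcal{O}(mp)$ breakpoints at the root. The one cosmetic difference is that you sum exact segment sizes over the disjoint nodes of a level, whereas the paper multiplies the node count $\lceil (m-1)/2^{\ell-1}\rceil$ by the maximal node size $\mathcal{O}(2^{\ell-1}p)$ and absorbs the ceiling overshoot through $2^{\ell-1} < 2(m-1)$.
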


\paragraph{Route Duration Function (RDF)}

\[
\begin{matrix}
\Delta_{\mathbf{r}} : & \dom(\mathbf{r}) & \rightarrow & \mathbb{R}_{\geq 0} \\
& t & \mapsto & \Delta_{\mathbf{r}}(t) = \delta_{\mathbf{r}}(t) - t
\end{matrix}
\]

The function $\Delta_\mathbf{r}$ \cite{lera-romeroLinearEdgeCosts2020, panHybridAlgorithmTimedependent2021, visserEfficientMoveEvaluations2020} is a left-continuous PWL function giving the duration (a value of $\mathbb{R}_{\geq 0}$, not a date of the horizon) of route $\mathbf{r}$ when the vehicle leaves the depot at $t$. It belongs to $\mathcal{F}$ but need not be non-decreasing, so it is the only function the route algebra builds that may fall outside $\mathcal{F}_{\text{ND}}$, the given arc TTF being the other potentially non-monotone member of the taxonomy. Its minimum $\Delta_{\mathbf{r}}^{*} = \min_{t \in \dom(\mathbf{r})} \Delta_\mathbf{r}(t)$ is the best route duration, the quantity minimized by the DM-TDVRPTW objective of section \ref{subsec:dm-def}. It is attained at the route \textit{dispatch time} $t_\mathbf{r}^{*} = \min \operatorname{arg\,min}_{t \in \dom(\mathbf{r})} \Delta_\mathbf{r}(t)$, the earliest minimizer, which solves the Optimal Starting Time Problem \cite{hashimotoIteratedLocalSearch2007, visserEfficientMoveEvaluations2020}. Both are read off the chain of $\delta_{\mathbf{r}}$ in $\mathcal{O}(m p)$ time, as the minimum of $y_k - x_k$ over its breakpoints and the earliest abscissa attaining it. Lemma \ref{lemma:minimum} in appendix \ref{app:taxonomy} proves that reading exact and independent of the representing chain.

Every route cost in the solver is a composition of these functions, so their complexity and the exactness of their implementation are load-bearing rather than incidental \cite{dabiaBranchPriceTimeDependent2013, lera-romeroLinearEdgeCosts2020, visserEfficientMoveEvaluations2020, panHybridAlgorithmTimedependent2021, blauthVehicleRoutingTimedependent2024}. Appendix \ref{app:taxonomy} states the properties and the complexity of each function in full.

\subsection{Exact composition of NDLCFs}\label{subsec:composition}

Every route cost under the \textit{Duration} objective is produced by composing NDLCFs, so this single operation decides the arithmetic in which a route cost, a published solution and an optimality claim refer to the same object \cite{visserEfficientMoveEvaluations2020, blauthVehicleRoutingTimedependent2024}. The operation itself is prior art \cite{ordaShortestpathMinimumdelayAlgorithms1990, deanShortestPathsFIFO2004, foschiniComplexityTimeDependentShortest2011, batzMinimumTimedependentTravel2013} (section \ref{subsec:td-literature}). Within TD routing, \citeauthor{visserEfficientMoveEvaluations2020} \cite{visserEfficientMoveEvaluations2020} describe it inside the proof of their continuous theorem. \citeauthor{blauthVehicleRoutingTimedependent2024}~\cite{blauthVehicleRoutingTimedependent2024} come closest, being to the best of our knowledge the first to lay out its principles at proposition level with complexity and correctness, yet they stop short of a precise algorithm. Both restrict themselves to continuous functions. Our contribution is therefore not the operation but its precision and its scope: to the best of our knowledge, the first complete pseudocode-level description of this composition, on the left-continuous class the canonical stepwise benchmarks require, with its edge cases handled explicitly. It is proved equivalent to the pointwise composition (theorem \ref{theorem:merge}) and coupled with the analysis of when it is evaluated exactly in IEEE-754 double precision. The listing is the one shipped in \kayros{} and the canonical checker. An earlier two-pointer formulation of the same sweep, which we extended from \cite{lera-romeroLinearEdgeCosts2020, visserEfficientMoveEvaluations2020} with an active normalization and explicit tail-flush steps, is the one benchmarked in section \ref{subsec:ndcpwlf-benchmark} and appendix \ref{app:composition}. The full benchmarking code is available \href{https://github.com/0nyr/pwlf_compare}{on GitHub}.

\noindent\textbf{Notations:} A chain $\beta_f = ((x^f_1, y^f_1), \dots, (x^f_{\phi_f}, y^f_{\phi_f}))$ represents $f \in \mathcal{F}_{\text{ND}}$ as in section \ref{subsec:taxonomy}, both coordinate sequences being non-decreasing. Unlike \cite{lera-romeroLinearEdgeCosts2020, visserEfficientMoveEvaluations2020, blauthVehicleRoutingTimedependent2024}, we allow both horizontal runs (equal ordinates, modeling waiting) and vertical runs (equal abscissae, modeling a jump). The latter arise from stepwise data and, after many compositions, from floating-point rounding, with consequences for reproducibility if they are not treated as first-class objects. The listing forms no slope and no intercept: the only arithmetic it performs is a linear interpolation on one input piece, in the abscissa parameter for $f$ and in the ordinate parameter for $g$. A vertical run of $g$ is thereby pulled back to its jump abscissa without any division by zero.

\begin{algorithm}
\setstretch{1}\footnotesize
\caption{$\textit{compose}(f, g)$: composition of two NDLCFs by an event merge over the value axis}\label{algo:compose_NDLCF}
\begin{algorithmic}[1]
\Require Chains $\beta_f$ and $\beta_g$ representing $f, g \in \mathcal{F}_{\text{ND}}$ with $a_f \leq g(a_g)$ (hypothesis (H))
\Ensure A chain $\beta_h$ representing $h = f \circ g$, or $\bot$ when $h$ is empty
\State \textbf{if} $\phi_f = 0$ \textbf{or} $\phi_g = 0$ \textbf{then} \Return $\bot$ \textbf{end if}
\State $\mathit{lo} := \max(x^f_1, y^g_1)$ and $\mathit{hi} := \min(x^f_{\phi_f}, y^g_{\phi_g})$ \Comment{Value band of the merge, equal to $[g(a_g), \mathit{hi}]$ under (H)}
\State \textbf{if} $\mathit{lo} > \mathit{hi}$ \textbf{then} \Return $\bot$ \textbf{end if} \Comment{Under (H): $g(a_g) > u_f$, the composition is empty}
\State $i := \min\{i : x^f_i \geq \mathit{lo}\}$ and $j := \min\{j : y^g_j \geq \mathit{lo}\}$, $\beta_h := \bot$
\While{($i \leq \phi_f$ and $x^f_i \leq \mathit{hi}$) or ($j \leq \phi_g$ and $y^g_j \leq \mathit{hi}$)}
    \State $\mathit{ev} :=$ the smallest of the available $x^f_i$ and $y^g_j$ \Comment{Next event value}
    \State $\mathit{ys} := (y^f_i, y^f_{i+1}, \dots)$ over all points with $x^f_i = \mathit{ev}$, advancing $i$ past them \Comment{Ordinates of $f$ at $\mathit{ev}$, several on a vertical run}
    \State $\mathit{ts} := (x^g_j, x^g_{j+1}, \dots)$ over all points with $y^g_j = \mathit{ev}$, advancing $j$ past them \Comment{Abscissae of $g$ at value $\mathit{ev}$, several on a plateau}
    \State \textbf{if} $\mathit{ys} = ()$ \textbf{then} $\mathit{ys} := (f(\mathit{ev}))$ by linear interpolation on the piece $x^f_{i-1} < \mathit{ev} < x^f_i$ \textbf{end if}
    \State \textbf{if} $\mathit{ts} = ()$ \textbf{then} $\mathit{ts} := (t)$ with $t$ interpolated in the ordinate parameter on the piece $y^g_{j-1} < \mathit{ev} < y^g_j$ \textbf{end if} \Comment{On a vertical run of $g$ this returns its jump abscissa}
    \State \textbf{for} $t \in \mathit{ts}$ \textbf{do} $\textit{emit}(t, \mathit{ys}_1)$ \textbf{end for} \Comment{Bottom edge: the plateau of $g$ at value $\mathit{ev}$ carries $f(\mathit{ev})$}
    \State \textbf{for} $y \in (\mathit{ys}_2, \mathit{ys}_3, \dots)$ \textbf{do} $\textit{emit}(\mathit{ts}_{\mathrm{last}}, y)$ \textbf{end for} \Comment{Right edge: the jump of $f$ at $\mathit{ev}$ climbs where $g$ leaves $\mathit{ev}$}
\EndWhile
\State \Return $\beta_h$
\Statex \textit{emit}$(x, y)$: append $(x, y)$ to $\beta_h$ unless it equals the last point of $\beta_h$ exactly.
\end{algorithmic}
\end{algorithm}

\noindent\textbf{Reading the listing:} The merge sweeps the value axis $[\mathit{lo}, \mathit{hi}]$ of the inner function once, stopping in increasing order at every \emph{event}, an abscissa of $\beta_f$ or an ordinate of $\beta_g$. At an event $\mathit{ev}$ it gathers the ordinates $\mathit{ys}$ that $f$ carries at $\mathit{ev}$ (one, or the whole connector when $f$ jumps at $\mathit{ev}$) and the abscissae $\mathit{ts}$ at which $g$ takes the value $\mathit{ev}$ (one, or the whole plateau when $g$ waits at $\mathit{ev}$). Whichever side has no breakpoint there is interpolated. It then emits the bottom edge of the rectangle $\mathit{ts} \times \mathit{ys}$ followed by its right edge. This \emph{double-tie} rule, one plateau of $g$ meeting one jump of $f$ at the same value, is the only place where the discontinuous extension changes the continuous sweep of \cite{visserEfficientMoveEvaluations2020, blauthVehicleRoutingTimedependent2024}. It is forced by left continuity: on the plateau $g \equiv \mathit{ev}$, so $h \equiv f(\mathit{ev})$, and $h$ jumps to $f(\mathit{ev}^+)$ where $g$ leaves $\mathit{ev}$, at the plateau's right end. The exact-duplicate drop of \textit{emit} is load-bearing, since two events can produce the same point. Appendix \ref{app:taxonomy-exactness} shows what a naive pairwise two-pointer merge does on such a tie, and states the floating-point clamp the shipped implementation adds on top of the listing.

\noindent\textbf{Complexity:} Every event consumes at least one input point and emits at most as many, which is the bound and the cost of theorem \ref{theorem:merge}. Redundant points and a trailing vertical can be emitted, and both are invisible to evaluation and to the duration minimum. Exact boundary ties, where the last value of one operand coincides with a breakpoint of the other, need no special step: the band $[\mathit{lo}, \mathit{hi}]$ is closed on both sides and the loop guard admits every event up to and including $\mathit{hi}$.

\noindent\textbf{Exactness, in brief:} The shipped merge performs no normalization. Removing redundant points is a separate, optional operation, and only part of it is bit-neutral. Dropping interior points of exactly horizontal or exactly vertical runs provably changes no value, whereas dropping a point on a \emph{sloped} piece does, even under an exact collinearity predicate, because floating-point interpolation is not transitive. On ATFs with genuine vertical steps \cite{rifkiImpactSpatiotemporalGranularity2020}, we measured the resulting pointwise deviations reaching full step heights. \kayros{} therefore keeps the \emph{un-normalized} composition as its canonical cost semantics, the checker and the solver staying bit-identical by construction, while the restricted flat/vertical deduplication remains available as a provably neutral optimization. Our validation campaign around this operation uncovered three independent families of defects: silent index-out-of-bounds errors in the reference open-source composition of \cite{lera-romeroLinearEdgeCosts2020}, the exact floating-point boundary ties that optimal \textit{Duration} solutions systematically produce and that random testing cannot reach, and defects of benchmark data provenance. For that last family, section \ref{subsec:benchmarks} gives the count of published BKS of \cite{lera-romeroLinearEdgeCosts2020} that the full-precision reading of \textit{Dabia2013} flips to infeasible. These findings are the origin of the two rules that sections \ref{subsec:kayros-arch} and \ref{subsec:benchmarks} enforce, the checker defining the objective and instances shipping as byte-exact canonical data. Appendix \ref{app:taxonomy} gives the full analysis and the evidence behind each family.

\section{The KAYROS Solver}\label{sec:solver}

This section presents \kayros{} as evaluated in this paper, release \texttt{1.6.0}, an open-source Python package over a C++ core. It covers the architecture and the principle governing every reported value (section \ref{subsec:kayros-arch}), the route-evaluation layer the heuristic stack rests on (section \ref{subsec:kayros-folding}), the anytime search (section \ref{subsec:kayros-anytime}), its fleet-aware extension (section \ref{subsec:kayros-fleet}), and the exact component with the precise content of an optimality certificate (section \ref{subsec:kayros-exact}).

\subsection{Architecture and the Checker as Referee}\label{subsec:kayros-arch}

\kayros{} runs two solving modes on one time-dependent engine. The \emph{anytime} mode returns a feasible solution within the first second and improves it until the deadline. The \emph{exact} mode searches for an optimum and reports whether it met the conditions required for a computational certificate. Both share one absolute deadline, stream incumbents through the same callback, and return the latest valid solution when interrupted. The core has four conceptual layers. (i) The NDLCF engine implementing the algebra of section \ref{sec:math} (breakpoint chains, the event merge of algorithm \ref{algo:compose_NDLCF} in its canonical un-normalized form, and the one-pass minimum of lemma \ref{lemma:minimum} yielding $\Delta_{\mathbf{r}}^{*}$ and $t_{\mathbf{r}}^{*}$). (ii) A route-evaluation layer folding arcs and vertices into route ready-time functions $\delta_{\mathbf{r}}$. (iii) A local-search layer holding the route trees, move evaluators and fleet machinery of sections \ref{subsec:kayros-folding} to \ref{subsec:kayros-fleet}. (iv) And the two search drivers. The vendored branch-price-and-cut component of section \ref{subsec:kayros-exact} sits alongside the drivers, in a contained subtree keeping its upstream style. One run uses one thread, parallelism belonging to the experiment layer above (GNU \texttt{parallel} in the experiment runner used with Grid'5000).

The governing principle is that \emph{the checker is the referee}. The canonical checker is not part of \kayros{}: it is the pure-Python reference implementation of the benchmark platform, which defines the objectives of section \ref{subsec:dm-def} constructively. It composes $\alpha$ and $\theta$ left to right into $\delta_{\mathbf{r}}$, reads $\Delta_{\mathbf{r}}^{*}$ and the earliest minimizer $t_{\mathbf{r}}^{*}$ off the breakpoints (lemma \ref{lemma:minimum}), and sums the per-route durations \emph{in canonical route order} (routes sorted by first customer). Floating-point addition is order-sensitive, and only a fixed order makes the total reproducible. Its arithmetic is plain IEEE-754 double precision with exact comparisons and no epsilon thresholds anywhere, which is what allows an independent reimplementation to produce bit-identical results. As such, the \kayros{} engine is a bit-identical port of that arithmetic, compiled with fused multiply-add contraction disabled so the port survives the optimizer. A standing equivalence suite requires the compiled core's $\delta_{\mathbf{r}}$, $\Delta_{\mathbf{r}}^{*}$ and $t_{\mathbf{r}}^{*}$ to be identical to the checker's on random routes over real instances. Any divergence is a solver defect, never an epsilon to add.

Two consequences follow. The value \kayros{} reports is the checker's recomputation, not the core's: before an incumbent is published it is handed to the checker, which recomputes the objective and rejects any mismatch bitwise. And the epsilon-free claim is a claim about the \emph{arithmetic}: the search layer does carry one screening threshold of $10^{-9}$ on tree-ranked move deltas, so that noise of the order of a unit in the last place cannot trigger a commit. No accounted value, however, is ever compared with a tolerance. Section \ref{subsec:kayros-folding} makes the resulting discipline concrete as the rule \emph{trees rank, the fold accounts}. The exact component obeys the same rule, every column entering its master problem being repriced by that fold.

\kayros{} performs no family-specific parsing: the benchmark library normalizes every family and accepts three travel-time models, all reaching the solver as explicit arc ATFs. The general model is a sidecar carrying, per arc, the raw breakpoints of $\alpha_{ij}$, validated on load to span $\mathcal{T}_{\mathrm{arc}}$ exactly, to be non-decreasing in both coordinates, and to satisfy $\alpha_{ij}(t) \geq t$. This non-IGP path admits arbitrary piecewise-linear data, in particular the stepwise families of section \ref{subsec:exactness-background}, whose arc travel times genuinely jump \cite{rifkiImpactSpatiotemporalGranularity2020}. Such value jumps appear as vertical runs of the chain, evaluated by the selected-function rule of definition \ref{def:selected}. IGP profiles and road graphs are compact encodings whose ATFs are materialized deterministically at load time and behave identically downstream, each carrying a digest of the canonical serialization of the materialized set. Verifying that digest is an explicit argument and not a default, since it costs roughly 80 seconds and 10 GB of peak memory on a one-million-arc instance. That verification is therefore never enabled inside a campaign run, and the file digests that pin which data a run consumed are taken before its clock starts. Loading itself is charged to the run under the protocol of section \ref{subsec:benchmarks}, where each solver receives $\tlim$ less the start-up and loading time already spent.

\begin{figure}[htbp]
\centering
\includegraphics[width=\linewidth]{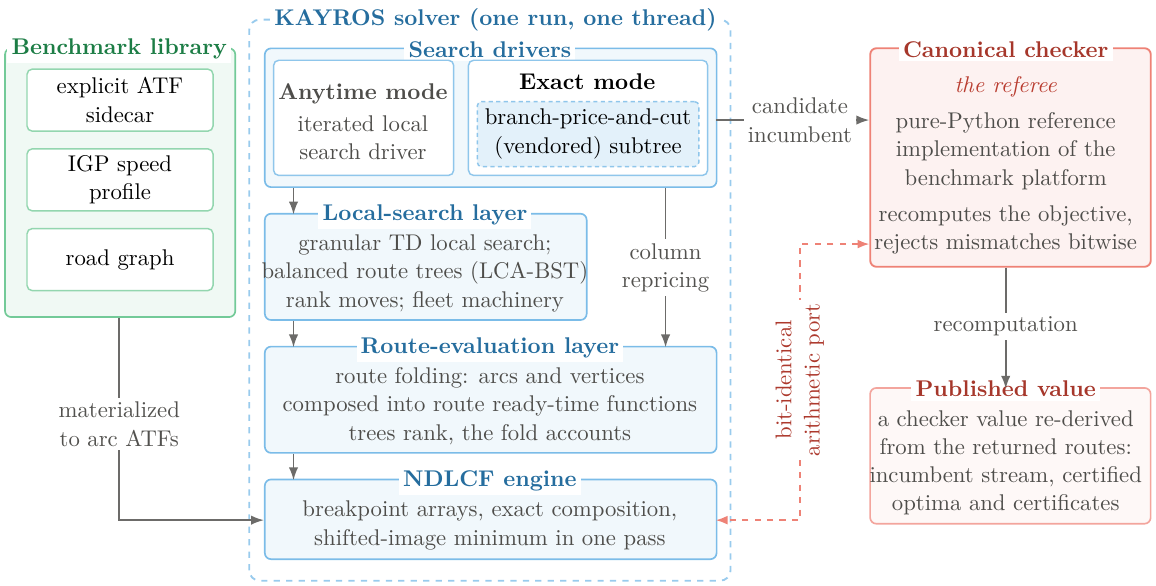}
\caption{Architecture of KAYROS. Two solving modes, anytime and exact, share one NDLCF engine that ports the reference checker's arithmetic bit-identically, with no approximation between instance data and reported costs. The local-search layer sits between the anytime driver and the engine, holding the balanced route trees that rank candidate moves. The canonical checker itself stays \emph{outside} the solver as the referee: every value KAYROS reports, incumbent or certified optimum, is a checker value re-derived from the returned routes.}
\label{fig:kayros-architecture}
\end{figure}

\subsection{Exact Route Folding and Balanced Route Trees}\label{subsec:kayros-folding}

We call \emph{route folding} the evaluation of a route by composing its arc and vertex functions in sequence, the operation that theorem \ref{theorem:comp_of_rrtf} organizes into a balanced tree. Write $\mathbf{r} = \langle v_1 = o, \dots, v_m = d \rangle \in \Omega$ as a chain of $m - 1$ \emph{leaves} with $\delta_{\mathbf{r}} = \mathit{lf}_{m-1} \circ \dots \circ \mathit{lf}_1$, where $\mathit{lf}_1 = \theta_{v_2} \circ \alpha_{v_1 v_2} \circ \mathrm{Id}_{[e_o, l_o] \cap \mathcal{T}_{\mathrm{arc}}}$ absorbs the feasible depot departure window, $\mathit{lf}_k = \theta_{v_{k+1}} \circ \alpha_{v_k v_{k+1}}$ for $1 < k < m-1$, and $\mathit{lf}_{m-1} = \mathrm{Id}_{[0, l_d]} \circ \alpha_{v_{m-1} v_m}$ closes on the depot due date with no waiting clamp, the route ending upon arrival. Two conventions are deliberate, and both exist to keep the leaves aligned with the checker. The grouping is $\theta \circ \alpha$ rather than the $\alpha \circ \theta$ of \citeauthor{blauthVehicleRoutingTimedependent2024} \cite{blauthVehicleRoutingTimedependent2024}, equal by proposition \ref{prop:temporal} in exact arithmetic but making every left-fold prefix literally one of the checker's accumulator values. And the depots receive no waiting clamp, unlike in \citeauthor{visserEfficientMoveEvaluations2020} \cite{visserEfficientMoveEvaluations2020}: $\theta_o$ and $\theta_d$ are the depot identities of section \ref{subsec:taxonomy}, the windows acting through domain restriction instead, which leaves $\Delta_{\mathbf{r}}^{*}$ and $t_{\mathbf{r}}^{*}$ unchanged but changes the function. Feasibility is then structural rather than flagged, since clamping lives inside $\theta$ only: waiting is a plateau, a deadline is a domain cap, and infeasibility is the empty function propagating through every enclosing composition. A query returning the empty function \emph{is} the infeasibility answer, a reading that proposition \ref{prop:taxonomy} licenses: every leaf is a temporal map, hypothesis (H) holds at every composition the tree performs, and theorem \ref{theorem:merge} makes the empty chain and the empty composition the same thing.

Over these leaves \kayros{} builds the balanced binary search tree (BST) of \citeauthor{blauthVehicleRoutingTimedependent2024} (their Theorem 12), which we call the LCA-BST, after the lowest common ancestor (LCA) its queries walk to. Boundaries delimit the leaves, $\mathit{lf}_k$ lying between boundaries $k$ and $k+1$. For $b_1 < b_2$ we write $\mathit{fold}(b_1, b_2) = \mathit{lf}_{b_2 - 1} \circ \dots \circ \mathit{lf}_{b_1}$, so that $\mathit{fold}(b_1, b_3) = \mathit{fold}(b_2, b_3) \circ \mathit{fold}(b_1, b_2)$ for any intermediate $b_2$, an identity from associativity of partial-function composition. Proposition \ref{prop:temporal} ensures that the event merge remains within its valid temporal subclass for these leaves. The tree is static and balanced over the boundary keys, and every node $b$ stores $\mathit{fold}(b', b)$ for each descendant $b' < b$ and $\mathit{fold}(b, b')$ for each descendant $b' > b$, one composition each. Hence the tree costs $\mathcal{O}(m \log m)$ compositions to build and stores as many functions per route. The payoff is at query time: after an $\mathcal{O}(\log m)$ walk, the composition of any contiguous leaf range follows from the lowest common ancestor of its two boundaries in \emph{at most one} composition, and in none when one boundary is an ancestor of the other. That is the structural gain over the composition tree $\bbt^{\mathbf{r}}$ of theorem \ref{theorem:comp_of_rrtf}, which answers the same query by folding $\mathcal{O}(\log m)$ stored nodes. A localized update recomputing only the $\mathcal{O}(m)$ stored functions straddling one changed leaf is implemented and gate-tested bitwise against a full rebuild. We report the shipped search honestly: its commit path rebuilds both changed routes from scratch, leaves, tree and repricing, and never calls that update.

Every candidate move is an instance of one splice primitive, replacing a window of the receiving route by a window donated from another route, one of the two windows being empty for a deletion or an insertion. Such a move is priced with a constant number of tree queries plus a constant number of seam leaves (algorithm \ref{algo:splice} in appendix \ref{app:solver}). The tree, however, only \emph{ranks}. Partial-function composition is associative in exact real arithmetic. Proposition \ref{prop:temporal} establishes that the temporal subclass is closed under its anchor hypothesis. Each computed composition interpolates, and floating-point interpolation is not associative. Hence different bracketings of one chain can differ by units in the last place and, on a stepwise instance, by a full step height. Appendix \ref{app:solver} reports the measurement on 4\,800 routes. \kayros{} therefore enforces the rule \emph{trees rank, the fold accounts}: an accepted move is committed only after both changed routes have been rebuilt and repriced by the sequential checker-identical fold. The acceptance test is a strict inequality on repriced totals with no tolerance, and the number of tree-ranked candidates the fold then rejects is instrumented rather than assumed to be zero. Measured against naive recomposition, the structures speed move evaluation up by a factor 2 at a mean route length of about 6 and by up to a factor 17 at about 50. We found no size at which the sequential fold overtakes them.

\subsection{The Anytime Stack}\label{subsec:kayros-anytime}

The anytime mode chains a construction heuristic, a granular time-dependent local search and an iterated local search driver, all pricing through the fold of section \ref{subsec:kayros-folding} and publishing through one strictly decreasing incumbent stream.

\paragraph{Construction.} A deterministic greedy heuristic builds routes one at a time, departing at the earliest feasible depot time and repeatedly appending the free customer with the earliest direct ready time, ties broken by the smallest index. A route is closed when the chosen customer would violate capacity or the depot return. It costs $\mathcal{O}(n^2)$ and runs in 0.03 s at $n = 1000$. Its output is published as the first incumbent before any descent, so the anytime stream opens in well under a second at every size. Under \textit{Duration} the seed is then diversified before the first descent: its routes are split until the count reaches a factor 2 of the constructed one by default. Indeed, the search sheds routes freely and effectively never adds one, so the seed's route count decides the final one. The split seed is republished only when it strictly improves on the raw one. It usually does not: splitting costs duration up front and pays off only through the search that follows. The device is disarmed under \textit{FleetCostDuration}, where every extra route is priced and splitting is the wrong direction.

\paragraph{Granular candidate lists.} Exhaustive enumeration dominates the runtime at scale, so all scans are restricted to granular candidate lists, in the tradition of granular search \cite{tothGranularTabuSearch2003} and of the proximity-based variant of \citeauthor{vidalHybridGeneticAlgorithm2013} \cite{vidalHybridGeneticAlgorithm2013}. Since \textit{Duration} carries no separate distance term, the proximity is read off the ATFs: a pair is priced by the minimum travel time over the departures the two time windows and the arc domain jointly allow, plus a fraction of the waiting that even the latest feasible departure cannot avoid at the head. Pairs whose earliest feasible departure already arrives past the head's deadline are screened out. The relation is symmetrized, each customer keeps its 50 nearest others by default, and the adjacency is union-symmetrized so membership tests are sound in both directions. Appendix \ref{app:solver-anytime} gives the exact formula and the constants. On 1000-customer instances, granular enumeration accelerates a full descent by about a factor 13 for a quality gap of 0.4 to 1.0\%. An exhaustive sentinel restores complete enumeration.

\paragraph{The descent.} The local search is a first-improvement variable-neighborhood descent over four operators, cycled as inter-route relocate, intra-route relocate, swap and 2-opt*, any commit restarting the cycle. Each inter-route operator carries its own granular justification. Cheap screens precede any function algebra, notably prefix-demand capacity screens and a cached deletion ranking that gives a relocation's donor gain before its insertion is priced (appendix \ref{app:solver-anytime}). Termination is guaranteed because every commit strictly decreases the repriced total. A deadline is checked between operator passes, so the overshoot is bounded by one pass and the exit state is always repriced. Classic intra-route 2-opt is deliberately absent: reversing a segment replaces every $\alpha_{ij}$ by $\alpha_{ji}$, a different function, so no stored composition can be reused.

\paragraph{The driver.} Algorithm \ref{algo:ils} states the iterated local search loop, whose shape and defaults follow the modern single-trajectory references \cite{PyVRP, hashimotoIteratedLocalSearch2007} adapted to feasible-only, checker-consistent search. The kick is a ruin-and-recreate perturbation \cite{schrimpfRecordBreakingOptimization2000}: seed customers are visited in random order and each drags its granular neighbors out. The removed customers are then reinserted in random order at their best tree-ranked feasible position, each insertion fold-repriced and falling through to the next-best position on a disagreement. Because the search is feasible-only, the iteration owns its repair: a customer with no feasible position may open a singleton route when $K$ allows, and otherwise the whole kick is undone and redrawn. Acceptance is Late-Acceptance Hill Climbing (LAHC) \cite{burkeLateAcceptanceHillClimbing2017} with both enhancements of the authors' section 4.2, so worse acceptances stay internal and the published stream remains strictly decreasing. Two elements matter at scale. An exhaustive-list polishing descent runs on every new global best, its result being adopted only where it strictly improves. And the search restarts from the incumbent on stagnation, measured either in iterations or, more usefully, in \emph{work units}. One work unit is one candidate pricing entered after the cheap screens. Work units are nearly rate-invariant across sizes, so a work-based window fires at the same effective stall on every size whereas a flat iteration count fits one size class only. The whole search is deterministic at a fixed seed. Appendix \ref{app:solver-anytime} gives the removal range, the history length, the further polish points and the measured rate invariance. An alternative MAX-MIN ant colony strategy, reviving the dormant time-dependent ACO stream \cite{donatiTimeDependentVehicle2008, balseiroAntColonyAlgorithm2011}, remains available and is compared against the default in section \ref{subsec:components}.

\begin{algorithm}
\setstretch{1}\small
\caption{The KAYROS time-dependent iterated local search}\label{algo:ils}
\begin{algorithmic}[1]
\Require An instance, an absolute deadline, a seed; $z(\cdot)$ the objective of section \ref{subsec:dm-def}, always computed by the checker-consistent fold; the exhaustive polish shown below is an option, on by default
\Ensure The best solution $\mathcal{S}^{*}$ and a strictly decreasing stream of published incumbents. \textbf{publish} is a guarded operation: it emits its argument only when the value strictly improves on the last published one, and, when the route-count cap of section \ref{subsec:kayros-fleet} is armed, only at or below that cap
\State $\mathcal{S} := \textsc{GreedyConstruct}()$; \textbf{publish} $\mathcal{S}$ \Comment{the raw seed, before any descent}
\If{the objective does not price routes} \Comment{\textit{Duration} only}
    \State $\mathcal{S}' := \textsc{SplitRoutes}(\mathcal{S}, 2 \left| \mathcal{S} \right|)$ \Comment{seed split factor 2 by default}
    \State \textbf{if} $\mathcal{S}'$ is feasible \textbf{then} $\mathcal{S} := \mathcal{S}'$; \textbf{publish} $\mathcal{S}$ \textbf{end if} \Comment{the guard usually refuses}
\EndIf
\State $\mathcal{S} := \textsc{Descend}(\mathcal{S}, \textit{granular})$; $\mathcal{S}^{*} := \mathcal{S}$; \textbf{publish} $\mathcal{S}^{*}$
\State $\mathcal{S} := \textsc{Descend}(\mathcal{S}, \textit{exhaustive})$ \Comment{optional polish, on by default}
\State \textbf{if} $z(\mathcal{S}) < z(\mathcal{S}^{*})$ \textbf{then} $\mathcal{S}^{*} := \mathcal{S}$; \textbf{publish} $\mathcal{S}^{*}$ \textbf{end if}
\State Initialize the late-acceptance history $\mathit{late}[1..\mathit{HIST}]$ at $z(\mathcal{S})$, slot index $k := 1$
\While{the deadline is not reached}
    \If{the stagnation window has elapsed (iterations or work units)}
        \State $\mathcal{S} := \mathcal{S}^{*}$ \Comment{restart to best}
        \State \textbf{if} the objective prices routes \textbf{then} \Call{FleetDescent}{} \textbf{end if}
        \State reset $\mathit{late}$ and the stagnation counters
    \EndIf
    \State \textbf{if} the objective prices routes \textbf{and} the fleet-descent work period has elapsed \textbf{then} \Call{FleetDescent}{} \textbf{end if}
    \State $\mathcal{S}' := \textsc{RuinAndRecreate}(\mathcal{S})$ \Comment{undone and redrawn if repair fails}
    \State $\mathcal{S}' := \textsc{Descend}(\mathcal{S}', \textit{granular})$ \Comment{restricted to customers whose context changed}
    \State \textbf{if} $z(\mathcal{S}') < z(\mathcal{S}^{*})$ \textbf{then} $\mathcal{S}' := \textsc{Descend}(\mathcal{S}', \textit{exhaustive})$; $\mathcal{S}^{*} := \mathcal{S}'$; \textbf{publish} $\mathcal{S}^{*}$ \textbf{end if}
    \State \textbf{if} $z(\mathcal{S}') < \mathit{late}[k]$ \textbf{or} $z(\mathcal{S}') < z(\mathcal{S})$ \textbf{then} $\mathcal{S} := \mathcal{S}'$ \textbf{end if} \Comment{first Burke-Bykov enhancement}
    \State \textbf{if} $z(\mathcal{S}) < \mathit{late}[k]$ \textbf{then} $\mathit{late}[k] := z(\mathcal{S})$ \textbf{end if} \Comment{second enhancement}
    \State $k := (k \bmod \mathit{HIST}) + 1$
\EndWhile
\State \Return $\mathcal{S}^{*}$
\Statex
\Procedure{FleetDescent}{} \Comment{section \ref{subsec:kayros-fleet}, run on the working solution $\mathcal{S}$}
    \State erase one route by the ejection ladder; \Return with $\mathcal{S}$ unchanged on failure
    \State $\mathcal{S} := \textsc{Descend}(\mathcal{S}, \textit{granular})$; $\mathcal{S} := \textsc{Descend}(\mathcal{S}, \textit{exhaustive})$ \Comment{the $K - 1$ basin}
    \State \textbf{if} $z(\mathcal{S}) < z(\mathcal{S}^{*})$ \textbf{then} $\mathcal{S}^{*} := \mathcal{S}$; \textbf{publish} $\mathcal{S}^{*}$ \textbf{end if} \Comment{the phase publishes its own improvements}
\EndProcedure
\end{algorithmic}
\end{algorithm}

\subsection{Fleet-Aware Search under FleetCostDuration}\label{subsec:kayros-fleet}

Under \textit{FleetCostDuration} (section \ref{subsec:dm-def}) the objective is $c_{\mathrm{fleet}} \left| \mathcal{S} \right| + \sum_{\mathbf{r} \in \mathcal{S}} \Delta_{\mathbf{r}}^{*}$. On the \textit{Blauth2024} family of section \ref{subsec:bks}, $c_{\mathrm{fleet}}$ is worth roughly 2.5 route durations, so dissolving one route can dominate a substantial duration increase. The fixed cost enters the search at exactly two points. The acceptance test prices each side with $c_{\mathrm{fleet}}$ per non-empty route, an exact no-op when $c_{\mathrm{fleet}} = 0$. A relocation moving a singleton donor's only customer is credited $c_{\mathrm{fleet}}$, without which every duration-increasing merge would be screened out and removals worth up to $c_{\mathrm{fleet}}$ never tried. This leaves a structural asymmetry: no local-search move opens a route, so the descent can only lower $\left| \mathcal{S} \right|$, and the kick is the only mechanism that can raise it. The run starts from whatever the greedy construction, or a warm start, produces, and the fleet bound $K$ acts as an upper bound only. The seed-splitting device of section \ref{subsec:kayros-anytime} stays disarmed. A first cheap mechanism dissolves the smallest route whole inside the kick. Instrumentation exposed its limit, which motivated the dedicated phase below (appendix \ref{app:solver-fleet}).

\emph{Fleet descent} is an ejection-ladder route elimination in the lineage of route-minimization heuristics for the VRPTW \cite{nagataPowerfulRouteMinimization2009}, run on the incumbent at every restart-to-best trigger and, by default, on a work-based period. One attempt erases a victim route and drains its customers through a two-rung ladder, feasible best insertion first and insertion with ejection second, with no singleton fallback: reopening a route is exactly the failure mode the phase exists to remove. The phase is all-or-nothing: any dead end, exhausted budget or deadline restores the pre-attempt solution exactly. Because no rung opens a route, success means exactly one route fewer with every customer served. All budgets are counted in work units rather than wall-clock seconds. Appendix \ref{app:solver-fleet} details the ladder.

The reading is mixed: the phase is decisive at $n = 500$ and hits a hard ceiling at $n \geq 1000$. Section \ref{subsubsec:fleet-eval} gives the figures and appendix \ref{app:solver-fleet-eval} the full reading.

The evaluated release adds a bounded \emph{penalty-tolerant squeeze}, opt-in and inert by default. It relaxes the time-window channel only, accumulating lateness in a separate non-negative warp channel. It then runs a penalized descent on a copy of the state, either as a post-drain polish or as an in-ladder rescue. Only a state returned to exactly zero warp with a strictly better objective is ever banked and returned. Hence warp never crosses the phase boundary, and a phase that banks nothing is a strict no-op.\footnote{From the machine instruction NOP, \emph{no operation}, which does nothing by design: a phase that banks nothing leaves the search state exactly as it was.} The inert default is a measured decision: the two readings of section \ref{subsubsec:fleet-eval} point opposite ways at $n = 1000$ and at $n = 500$, so there is no size at which arming is unconditionally safe. Campaigns arm it explicitly, and default streams stay bitwise pre-squeeze. A companion option confines the search to a route-count cap, the publication guard of algorithm \ref{algo:ils} refusing every above-cap state. A run therefore publishes nothing at all if the cap is never attained, which makes a negative fleet result reportable rather than silently downgraded. Section \ref{subsec:bks} uses this option. The exact component does not support \textit{FleetCostDuration}, so nothing in this section carries an optimality claim. Appendix \ref{app:solver-fleet} states the mechanics of the squeeze and of the route-count cap.

\subsection{The Exact Component and Optimality Certificates}\label{subsec:kayros-exact}

Rather than reimplementing a decade of exact time-dependent machinery, \kayros{} vendors the branch-price-and-cut solver of \citeauthor{lera-romeroLinearEdgeCosts2020} \cite{lera-romeroLinearEdgeCosts2020} into a contained subtree under its MIT license, with an attribution notice: the set-partitioning master of section \ref{subsec:spf}, the labeling algorithm for the pricing problem with its heuristic and exact levels, the dominance rules and the cutting planes. Heuristics and standalone entry points were not vendored.

Five additions produce the component this paper evaluates. Three concern infrastructure. The first is an open LP backend, HiGHS \cite{huangfuParallelizingDualRevised2018} built statically into the distributed wheels.\footnote{A \emph{wheel} is the standard built distribution format of Python packages, the \texttt{.whl} archive installers download from the PyPI package repository \cite{pythonsoftwarefoundationPyPI2026}. Building HiGHS statically into it means a plain \texttt{pip install kayros} ships the exact mode ready to run, with no compiler and no separately installed LP solver.} The second is anytime compliance, one absolute deadline per run from which every component takes its residual budget. The third is warm starts through columns, caller-supplied routes being repriced and added as initial columns. Two concern exactness. One is checker-consistent binary64 column costs: every route entering the master is repriced by the fold of section \ref{subsec:kayros-folding}, so its objective coefficient follows the canonical checker. This agreement does not make LP bounds mathematically exact. They retain the backend's stated tolerances. The other is the exact value-jump path used for production pricing on stepwise instances, where verticals travel through the piecewise-linear machinery as tagged first-class objects. The reason is that the arithmetic that is safe for a genuine travel-time jump is wrong for the inverse of a flat interval, and selecting between the two by inspecting intermediate operands had produced a class of over-certification. Two verdicts complete the picture. A run reaching its deadline without completing the proof conditions returns open, with a solver lower bound valid under the same standard LP and pricing tolerances the certificates carry rather than as a rigorous bound. A run stopped by a resident-set self-guard returns a resource limit. Appendix \ref{app:solver-exact} details each addition.

What a certificate establishes is one sentence, which the stamp itself carries: the solution is \emph{optimal under checker-exact route costs and standard LP/pricing tolerances, completeness modulo Lera epsilon dominance}. These are \emph{conditional computational results}, not formal proof objects, and the three conditions are distinct. Here \enquote{checker-exact} names agreement with the checker's deterministic binary64 arithmetic, not exact real arithmetic. The independent checker verifies returned-route feasibility and cost. Route costs agree with its deterministic binary64 convention by construction. The dual bounds carry the LP solver's standard tolerances rather than interval-arithmetic safe bounds, and turning them into rigorous bounds is future work. Pricing completeness is modulo the vendored labeling's epsilon comparisons, a residue we deliberately did not remove. A certificate is therefore not a portable artifact checkable without rerunning the solver, and it is not a claim of exact real arithmetic. It follows the practical tradition of the exact literature.

Accordingly, no certificate is issued from a single run. The publication gate is a four-run protocol, crossing cold and warm starts with the two bidirectional labeling modes.\footnote{The pricing labeling extends labels forward from $o$ and backward from $d$ on the time-reversed instance, and completed routes come from merging a forward and a backward label at a boundary time. The two modes place that boundary at the horizon end (asymmetric: the forward direction spans the whole horizon and the merge is minimal) or at its midpoint (symmetric: both directions grow to half the horizon, halving label growth at the price of a genuine merge). The two traverse different code paths over different label populations, which is what makes their agreement an independent soundness signal.} All four runs must reach proven optimality at one agreeing checker-evaluated binary64 value not above the stored reference, each with at least one exact-pricing iteration in an audited pricing census and with routes the checker actually repriced. Refusal is a first-class outcome alongside open and resource-limited, and appendix \ref{app:solver-exact} states the rules in full. The limit of this instrument must be stated as plainly as its strength. Four runs can detect a defect that depends on initialization or configuration, and the campaign record contains such detections. Four copies of one faulty reasoning path, however, provide no protection against a shared deterministic error, and the external checker cannot verify that the solver searched every improving column. A better checker-valid solution is therefore a decisive refutation, whereas the absence of one is not a proof. This is why the certificates of section \ref{subsec:bks} are published as falsifiable, retractable artifacts, and why certificates have in fact been withdrawn, two of them under this very protocol (section \ref{subsec:bks}).

\section{Experimental Evaluation}\label{sec:results}

\subsection{Benchmarks and Protocol}\label{subsec:benchmarks}

\paragraph{Five families} The campaign draws on five instance families, spanning the classic literature benchmark, real road networks and controlled variation at a scale the classic benchmark does not reach. \textit{Dabia2013} \cite{dabiaBranchPriceTimeDependent2013} is the TD-Solomon benchmark described below, the reference point of the exact literature. \textit{Rifki2020} \cite{rifkiImpactSpatiotemporalGranularity2020} is built on the Lyon road network and its travel-time functions carry discontinuities, which stress exactly the arithmetic of section \ref{subsec:composition}. \textit{Vu2020} contributes continuous-travel-time instances at $n = 59$ and $n = 99$ over two customer profiles and four congestion depths. \textit{Lera2026} extends paired time-dependent coverage of the Solomon structure to $n = 1000$ over three generation series. \textit{Poryos2026} is a road-network family built over five cities, pairing static and time-dependent variants with reproducible synthetic traffic, and reaching $n = 1000$. Both \textit{Lera2026} and \textit{Poryos2026} were constructed during this thesis: the former extends established Gehring--Homberger and IGP ingredients, while the latter is the original road-network benchmark contribution. Section \ref{sec:discussion} states their combined share under the panel-equal weighting.

The classic TDVRPTW benchmark is the Solomon VRPTW set \cite{solomonAlgorithmsVehicleRouting1987} under the TD-IGP profiles \cite{ichouaVehicleDispatchingTimedependent2003}, used by \cite{dabiaBranchPriceTimeDependent2013, lera-romeroEnhancedBranchPrice2018, lera-romeroLinearEdgeCosts2020, panHybridAlgorithmTimedependent2021, blauthVehicleRoutingTimedependent2024}. Derived sets extend it further, its instances to the multi-trip variant \cite{panMultitripTimedependentVehicle2021} and its speed-profile recipe to the electric one \cite{lera-romeroBranchCutandPriceTimeDependent2024}. What circulates under that name is not one dataset, since several slightly different variants exist \cite{blauthVehicleRoutingTimedependent2024}. We take the instances of the seminal work of \citeauthor{dabiaBranchPriceTimeDependent2013} \cite{dabiaBranchPriceTimeDependent2013}, the variant reused since by \citeauthor{lera-romeroLinearEdgeCosts2020} and \citeauthor{panHybridAlgorithmTimedependent2021} \cite{lera-romeroLinearEdgeCosts2020, panHybridAlgorithmTimedependent2021}, and refer to this dataset as \textit{Dabia2013}, vendored in MAMUT-routing \cite{rascoussierMAMUTrouting2026}.

\paragraph{Canonical preprocessing} \textit{Dabia2013} is not fully specified by ``Solomon instances plus the speed profiles of \citeauthor{ichouaVehicleDispatchingTimedependent2003}'': the actual instances are the output of a preprocessing pipeline that was never published as data. The pipeline is hard-coded in the original Java solver of \citeauthor{dabiaBranchPriceTimeDependent2013}, which the authors kindly provided to us. It scales every Solomon quantity by 10, \emph{floor-truncates} every arc distance to an integer, and integrates a stepwise speed profile along that distance, which yields the FIFO-compliant TTFs of section \ref{sec:math}. Appendix \ref{app:protocol-dabia} states the pipeline step by step, and the resulting instances are identical to those distributed by \citeauthor{lera-romeroLinearEdgeCosts2020} \cite{lera-romeroLinearEdgeCosts2020}. The floor truncation deserves emphasis: it is invisible in the papers, yet it is part of the very definition of the benchmark, just as the DIMACS 2021 and SINTEF conventions are for the classic VRPTW \cite{DIMACS2021, Sintef2008}. The truncation also makes the canonical instances an \emph{outer} approximation of their full-precision reading \cite{rascoussierImpactScalingRounding2026}, so that feasibility transfers one way only and 11 of the 146 published BKS of \cite{lera-romeroLinearEdgeCosts2020} lose it under that reading (appendix \ref{app:bks}). This is why the curated \textit{Dabia2013} family shipped with \kayros{} \cite{rascoussierMAMUTrouting2026} distributes byte-exact canonical data with checksums, never re-derivation recipes.

\paragraph{The frozen instance set} The comparison runs on 212 instances fixed before any campaign record existed, organized as 10 \emph{panels}, a panel being one (family, size) pair, two per family. Two panels are taken whole, \textit{Dabia2013} at $n = 100$ (all 56 instances) and \textit{Poryos2026} at $n = 1000$ (all 60). These are the two canonical panels the anytime figures below are drawn for. The other 8 are stratified samples of 12 instances each. A deterministic, solver-blind generator draws them from the strata each family exposes, as a pure function of the enumerated pool and one seed, never of any solver result (appendix \ref{app:protocol} gives the strata and the draw rule). The frozen manifest carries a digest of its entry list (\texttt{24c18aed}) and the commit of the instance store it was drawn from, so the set is fully reconstructible. Table \ref{tab:panels} gives its composition.

\begin{table}[htbp]
\centering
\small
\caption{The 10 panels of the frozen 212-instance benchmark, two per family, with the scheduled seed budget and, in Runs, the number of cells over all 5 arms. The last column reads as a ratio: of the panel's reference values, how many are optimality certificates under the post-fold publication snapshot, the remainder being best-known solutions with no optimality claim.}
\label{tab:panels}
\begin{tabular}{llrrrrrr}
\toprule
Panel & Family & $n$ & Pool & Instances & Seeds & Runs & Cert. / BKS \\
\midrule
p01 & Dabia2013 & 50 & 56 & 12 & 10 & 600 & 7 / 12 \\
p02 & Dabia2013 & 100 & 56 & 56 & 10 & 2800 & 25 / 56 \\
p03 & Rifki2020 & 50 & 30 & 12 & 10 & 600 & 2 / 12 \\
p04 & Rifki2020 & 60 & 30 & 12 & 10 & 600 & 0 / 12 \\
p05 & Vu2020 & 59 & 56 & 12 & 10 & 600 & 11 / 12 \\
p06 & Vu2020 & 99 & 56 & 12 & 10 & 600 & 11 / 12 \\
p07 & Lera2026 & 200 & 96 & 12 & 10 & 600 & 0 / 12 \\
p08 & Lera2026 & 1000 & 96 & 12 & 5 & 300 & 0 / 12 \\
p09 & Poryos2026 & 100 & 20 & 12 & 10 & 600 & 0 / 12 \\
p10 & Poryos2026 & 1000 & 60 & 60 & 5 & 1500 & 0 / 60 \\
\midrule
\multicolumn{4}{l}{Total, 5 families} & 212 & & 8800 & 56 / 212 \\
\bottomrule
\end{tabular}
\end{table}

\paragraph{Arms, budget and grid} Five solver arms were run, with their pinned versions and model contracts in table \ref{tab:arms}. Four are \emph{Tier-1}, meaning they consume the rich per-arc travel-time functions of the instance without approximation: \kayros{} 1.6.0 at released defaults with an empty parameter set, Timefold Solver 2.3.0, Hexaly 15.0.20260724 through the vendor's C++ external-function binding (section \ref{subsubsec:threads} states why that binding and not another), and jsprit 2.0.0. The fifth is a \emph{Tier-2} arm, the same Hexaly build on a disclosed time-sliced approximation of those functions at 96 slices, likewise through its C++ binding. It is reported for information, separated from the Tier-1 results. Every arm receives the same budget, one hour of wall-clock time per cell in a single-threaded process pinned to one physical core, and no arm is given more. Instances at $n \leq 200$ run on 10 seeds and instances at $n \geq 1000$ on the first 5 of the same list, which yields $5 \times 1760 = 8\,800$ cells. All 8\,800 completed, ended by exhausting their time limit, so no arm is scored on a truncated stream. Cost is never the solver's own number: every published incumbent of every arm is re-evaluated by the same external checker (\texttt{mamut-routing-lib} 0.9.0), which reschedules each route's departures optimally and returns the canonical \textit{Duration}. It is on that stream that every statistic below is computed.

\begin{table}[htbp]
\centering
\small
\caption{The five solver arms, their pinned versions and their model contracts, all run single-threaded at one hour per cell and rescored by the same checker. The time-sliced Hexaly tier is descriptive and never enters the ranking or the confirmatory family.}
\label{tab:arms}
\begin{tabularx}{\textwidth}{llX}
\toprule
Arm & Version & Model contract \\
\midrule
\href{https://github.com/0nyr/kayros}{\kayros{}} & 1.6.0 & Tier-1, native. Released defaults with an empty parameter set, Duration objective, iterated local search; the penalty-tolerant squeeze is inert under Duration. \\
\href{https://github.com/TimefoldAI/timefold-solver}{Timefold} & 2.3.0 & Tier-1. Time-dependent travel evaluation implemented in model code on the open-source edition, checker-rescored like every other arm. \\
\href{https://www.hexaly.com/hexaly-optimizer}{Hexaly} (Tier-1) & 15.0.20260724 & Tier-1. Rich per-arc piecewise-linear arrival functions through a C++ external function, the binding that evaluates them at the full requested width. \\
\href{https://github.com/graphhopper/jsprit}{jsprit} & 2.0.0 & Tier-1. Time-dependent transport-time callbacks, routes kept at their fixed earliest departure by the solver, then rescheduled by the checker on the same sequences. \\
\midrule
\href{https://www.hexaly.com/templates/time-dependent-routing-problem-with-time-windows-tdcvrptw}{Hexaly} (Tier-2, sliced) & 15.0.20260724 & Tier-2, descriptive only. The same build on a disclosed time-sliced approximation at 96 slices, rescored on the rich functions, outside the ranking and the confirmatory family. \\
\midrule
\multicolumn{3}{l}{Checker pin: \href{https://github.com/ANR-MAMUT/MAMUT-routing-lib}{mamut-routing-lib} 0.9.0, source commit \texttt{3702a30}.} \\
\bottomrule
\end{tabularx}
\end{table}

\paragraph{Hardware and load} The campaign ran on a homogeneous cluster of the Grid'5000 testbed\footnote{The \texttt{grvingt} cluster of the Nancy site, as batch jobs on its abaca queue: each node carries two Intel Xeon Gold 6130 processors, 32 physical cores in total, and 192~GiB of memory. The thread-scaling study of section \ref{subsubsec:threads} ran on the same cluster.}, with one solver process per physical core and never more processes than cores. The final wave ran on 30 nodes and completed in about 22.3 hours of makespan, producing 41\,GB of streams, with no memory exhaustion at any point. The budget $\tlim$ is measured end to end by the harness, from process launch, and each solver is handed $\tlim$ less the start-up and loading time already spent, so every incumbent is timestamped on that one external clock rather than on the solver's own. The start-up cost that a Java arm and a Python-bound arm pay, and a native arm does not, was therefore measured as a \emph{load fraction} of $\tlim$. The largest panel-mean load fraction is 1.62\% (58.4\,s) for \kayros{} on \textit{Poryos2026} at $n = 1000$. Individual runs can take longer, reaching 3.552\% (127.9\,s) on that panel. All loading remains inside the common budget, and no arm is credited a warm-up allowance (appendix \ref{app:protocol-load}).

\paragraph{The reference and its zero-gap property} Anytime scoring needs a per-instance reference value, and ours is a single frozen snapshot of the public solution store, cut once and used unchanged for all five arms. It holds one entry per manifest instance, of which 56 hold an optimality certificate. It was cut \emph{after} the campaign's own improvements were folded back into the store: the final wave produced 52 solutions strictly better than the stored value, and all 52 were re-verified by the canonical checker and folded. The resulting snapshot improves on the pre-campaign freeze at 54 of the 212 entries. Because the reference is post-fold, the squeezed gap is non-negative everywhere by construction, and on an instance whose record an arm holds, that arm's best run ends at exactly zero gap. Section \ref{sec:discussion} states what that costs and what it buys. The reference class of every panel is recorded in table \ref{tab:panels}, because a zero gap against a certificate means proven optimality under the conditions of section \ref{subsec:kayros-exact}, whereas a zero gap against a BKS means only that nobody has done better yet. Note that references are keyed by the full identity (problem variant, family, size, instance path) and never by instance name since names repeat across sizes in three of the five families.

\subsection{Metrics and Confirmatory Design}\label{subsec:metrics}

Evaluation rests on two metrics, reported side by side throughout this section: a normalized anytime score $\ascore$, which charges the whole trajectory of a run, and the final gap $g_\tlim$, the classical end-of-budget distance to the reference. The paragraphs below define both. The confirmatory design is built on the first, and every final-gap aggregate is descriptive. The design of $\ascore$ itself is the subject of a companion paper \cite{rascoussierAnytimeSolverEvaluation2026}, which also reports the sensitivity analysis this section does not repeat: how stable the rankings it produces are across metric variants and across reference-policy regimes. The dispersion the seeds carry is measured on this campaign's own Hexaly ladders in the thread-scaling report \cite{rascoussierHexalyThreadScaling2026}. What follows recalls only what the present campaign uses.

\paragraph{A bounded gap} Let $z$ be a checker-evaluated solution cost and $z^* > 0$ the reference value of its instance. The \emph{squeezed gap} is
\[
\sgap(z, z^*) = \frac{z - z^*}{z + z^*} \in [-1, 1],
\]
strictly increasing in $z$. When the reference is clear from context we write $\sgap(z)$, so that $\sgap(0) = -1$, $\sgap(z^*) = 0$ and $\sgap(z) \to 1$ as $z \to +\infty$. The closed lower endpoint is attained only at $z = 0$, a cost no practical instance admits, and the upper one only by the no-incumbent convention stated below, so on strictly positive finite costs the values lie in the open interval $(-1, 1)$. It is the classical \emph{gap}, the relative excess $g(z, z^*) = (z - z^*)/z^*$ over the reference, put on a bounded scale: $\sgap = g/(2 + g)$ and $g = 2\sgap/(1 - \sgap)$. Hence the two orderings coincide, and every figure below carries the $g$ scale as a second axis. The bounded form is what makes an anytime score well defined without arbitrary truncation, the state before the first incumbent being scored $\sgap = 1$ as the natural limit rather than a cap. The signed construction keeps the metric well defined against a reference a solver can beat. Appendix \ref{app:protocol-metrics} develops both points.

\paragraph{The anytime score} Let $z(t)$ be a run's best-so-far checker-evaluated incumbent at elapsed time $t$, a right-continuous step function equal to $+\infty$ before the first incumbent. Over a horizon $\tlim$, the \emph{normalized signed primal integral} is
\[
\ascore(\tlim) = \frac{1}{\tlim} \int_{0}^{\tlim} \sgap\big(z(t), z^*\big) \, \mathrm{d}t \in [-1, 1],
\]
evaluated exactly as a finite sum over the incumbent steps. Both bounds are inherited from $\sgap$. In this campaign the stronger property holds: the reference being the post-fold snapshot of section \ref{subsec:benchmarks}, no incumbent ever lands below it, so $\sgap \geq 0$ pointwise and every score lies in $[0, 1]$. The instrument itself stays signed for the sake of reference policies a solver can beat, and strictly positive costs already keep the lower endpoint unattained in general. The score charges both how long a run takes to become useful and how well it ends, which is the property an anytime comparison needs that a single final-value table does not have. The quantity integrated is the checker's \textit{Duration} so lower is better.

\paragraph{The final gap} The second metric is the end-of-budget value of the same stream. Since $z(\tlim)$ is the final incumbent, its gap is $g(z(\tlim), z^*)$, which we abbreviate $g_\tlim$ and report in percent alongside every $\ascore$ aggregate, under the same weighting. It answers the question the anytime score deliberately does not privilege: how good is the final incumbent reached at budget exhaustion.

\paragraph{Weighting and estimand} Within a panel every instance has equal weight, the per-instance statistic being the mean over that instance's scheduled seeds and the panel statistic the unweighted mean over its instances. The pooled aggregate is the unweighted mean of the 10 panel means, so each family contributes exactly one fifth of the headline whatever its instance count. Every scheduled run enters every aggregate. A run that produced no valid incumbent contributes $\sgap = 1$ over the whole horizon rather than being dropped, and no failure status is ever excluded. The unequal panel sizes are a deliberate design arbitrage: the stratified 12-instance panels buy breadth, every family and size contributing equally to the headline, while the two panels taken whole study the two key sizes in depth. The pooled estimate consequently mixes precisions, a 56-instance panel weighing the same as a 12-instance one, and the bootstrap below resamples inside each panel, which carries that unequal precision into the reported interval.

\paragraph{Pre-committed confirmatory design} The statistical design was frozen in writing on 2026-08-06, in the same pass as the instance manifest, before any pilot, calibration or campaign record existed or had been inspected. It fixes exactly three confirmatory contrasts, \kayros{} against each Tier-1 contender on the pooled panel-equal mean $\ascore$ at the one-hour budget, and declares that nothing else is confirmatory. Each contrast is paired at instance level and tested by a two-sided bootstrap over $10\,000$ resamples at a fixed seed, the resampling unit being the instance drawn with replacement within its own panel with all of its seeds. The pooled distribution is rebuilt per replicate by recomputing the 10 panel means and averaging them. Multiplicity is controlled by a Holm step-down across the three contenders at the 5\% level, and across nothing else, because there are no per-panel confirmatory tests to correct for. Every per-panel number in this section is descriptive and exploratory, and is labeled as such wherever it appears. Reported goal-attainment levels are 1\%, 5\% and 10\% excess over the reference on the raw scale, with all scheduled runs in the denominator. A median hitting time is quoted only when at least half the runs attain the goal inside the budget.

\paragraph{Seed budget} Because the two $n \geq 1000$ panels carry 5 seeds and the other 8 carry 10, we re-ran the unmodified estimator on the 5-seed subset that every panel shares, which removes that asymmetry at no solver cost. Nothing moved: the arm ordering, all 10 panel winners and all three Holm verdicts are identical, and every pooled mean shifts by at most 0.7\% in relative terms. The confirmatory readout therefore remains the pre-committed full-seed one.

\subsection{Contender Comparison}\label{subsec:contenders}

\paragraph{Pooled result} Table \ref{tab:pooled} gives the pooled panel-equal mean anytime score of every arm with its bootstrap interval, and the three pre-committed contrasts. \kayros{} attains a pooled mean $\ascore$ of 0.008368, against 0.02282 for Timefold, the strongest contender, a 63.3\% lower mean score. Hexaly and jsprit are further behind, and every Tier-1 interval is disjoint from ours. All three contrasts are Holm-significant at the 5\% level, each with an adjusted $p$-value at the resolution floor of the 10\,000-resample bootstrap, and each with a paired difference interval lying entirely below zero. The floor is worth reading literally: the bootstrap cannot resolve a smaller $p$-value than $3 \times 2/10\,001$ after Holm adjustment. The correct statement is therefore that no resample ever reversed the sign of a contrast, not that a particular small number was estimated. The final gap tells the same story at the end of the budget: a pooled mean $g_\tlim$ of 0.84\% for \kayros{} against 2.37\% for Timefold, 7.42\% for Hexaly and 51.3\% for jsprit, a 64.6\% lower mean final gap against the strongest contender. The three descriptive paired contrasts on $g_\tlim$, in the bottom block of table \ref{tab:pooled}, all have intervals entirely below zero. The two metrics thus rank the Tier-1 arms identically, and neither the trajectory weighting of $\ascore$ nor the end-point weighting of $g_\tlim$ changes any conclusion. Figure \ref{fig:pooled-curves} draws the trajectories behind those numbers.

\begin{table}[htbp]
\centering
\small
\caption{Pooled panel-equal results. Top: mean anytime score with its 95 percent bootstrap interval and the mean final gap $g_{\tlim}$. The mean final gap of the time-sliced tier is undefined because 8 of its runs end the hour without any incumbent, the failure mode the bounded score absorbs. Middle: the three pre-committed confirmatory contrasts on $\ascore$ under Holm correction across contenders. Bottom: the same paired contrasts on the mean final gap, in percentage points. These are descriptive, the confirmatory family being the $\ascore$ contrasts alone.}
\label{tab:pooled}
\begin{tabular}{lccc}
\toprule
Arm & Mean $\ascore$ & 95\% CI & Mean $g_{\tlim}$ (\%) \\
\midrule
\kayros{} & 0.008368 & $[0.006950,\, 0.009979]$ & 0.839 \\
Timefold & 0.02282 & $[0.01950,\, 0.02668]$ & 2.37 \\
Hexaly (Tier-1) & 0.04162 & $[0.03581,\, 0.04771]$ & 7.42 \\
jsprit & 0.1416 & $[0.1270,\, 0.1566]$ & 51.3 \\
\midrule
Hexaly (Tier-2, sliced), descriptive & 0.05906 & $[0.04460,\, 0.07725]$ & --- \\
\bottomrule
\end{tabular}

\vspace{1.5ex}

\setlength{\tabcolsep}{4pt}
\begin{tabular}{lccc}
\toprule
Confirmatory contrast, $\ascore$ & Mean difference & 95\% CI & $p_{\text{Holm}}$ \\
\midrule
\kayros{} vs Timefold & $-0.01445$ & $[-0.01843,\, -0.01064]$ & $< 6.0 \times 10^{-4}$ (bootstrap floor) \\
\kayros{} vs Hexaly & $-0.03325$ & $[-0.03907,\, -0.02790]$ & $< 6.0 \times 10^{-4}$ (bootstrap floor) \\
\kayros{} vs jsprit & $-0.1332$ & $[-0.1474,\, -0.1192]$ & $< 6.0 \times 10^{-4}$ (bootstrap floor) \\
\bottomrule
\end{tabular}

\vspace{1.5ex}

\begin{tabular}{lcc}
\toprule
Descriptive contrast, $g_{\tlim}$ (pp) & Mean difference & 95\% CI \\
\midrule
\kayros{} vs Timefold & $-1.534$ & $[-2.524,\, -0.5960]$ \\
\kayros{} vs Hexaly & $-6.576$ & $[-8.289,\, -5.057]$ \\
\kayros{} vs jsprit & $-50.49$ & $[-65.50,\, -38.67]$ \\
\bottomrule
\end{tabular}
\end{table}

\paragraph{Per panel} Table \ref{tab:perpanel} breaks the same runs down by panel, on both metrics, and appendix \ref{app:panels} adds the bootstrap intervals and the paired differences. \kayros{} has the lowest mean score on 7 of the 10 panels, and the mean final gap crowns the same arm as the anytime score on all 10. Timefold takes the other three, \textit{Lera2026} at $n = 200$ and \textit{Vu2020} at $n = 59$ and $n = 99$. On all three the paired difference interval straddles zero, the \textit{Lera2026} $n = 200$ margin being 0.0001 in absolute score. The intervals therefore leave the sign of these three differences unresolved. They do not establish equivalence. On the \textit{Vu2020} panels an hour is long enough for both arms to close most of the distance to a reference that is a proven optimum on 11 of the 12 instances. On all 27 remaining panel contrasts the difference is negative with an interval clear of zero. The widest separations are at the two extremes of the benchmark: \textit{Lera2026} at $n = 1000$, where our mean score is the only one of the four below 0.1, and \textit{Vu2020} at $n = 99$ against jsprit.

\begin{table}[htbp]
\centering
\footnotesize
\setlength{\tabcolsep}{2pt}
\caption{Per-panel mean anytime score and mean final gap of the four Tier-1 arms, best Tier-1 value in bold, with the descriptive time-sliced Hexaly tier closing each block. The two metrics crown the same arm on all 10 panels. The em-rule marks the one panel on which the sliced tier ends at least one run without any incumbent, leaving its mean final gap undefined. Instance counts and reference classes are those of table \ref{tab:panels}.}
\label{tab:perpanel}
\begin{tabular}{lrrrrrcrrrrc}
\toprule
& & \multicolumn{5}{c}{Mean $\ascore$} & \multicolumn{5}{c}{Mean $g_{\tlim}$ (\%)} \\
\cmidrule(lr){3-7} \cmidrule(lr){8-12}
Family & $n$ & \kayros{} & Timefold & Hexaly & jsprit & sliced & \kayros{} & Timefold & Hexaly & jsprit & sliced \\
\midrule
Dabia2013 & 50 & \textbf{0.001322} & 0.01049 & 0.01675 & 0.008715 & 0.01448 & \textbf{0.206} & 2.02 & 2.27 & 1.48 & 1.87 \\
Dabia2013 & 100 & \textbf{0.003198} & 0.008140 & 0.04074 & 0.02479 & 0.04130 & \textbf{0.420} & 1.20 & 6.62 & 4.63 & 6.40 \\
Lera2026 & 200 & 0.01322 & \textbf{0.01308} & 0.06986 & 0.1241 & 0.07768 & 2.02 & \textbf{1.09} & 13.4 & 28.9 & 15.4 \\
Lera2026 & 1000 & \textbf{0.02910} & 0.1290 & 0.1975 & 0.2973 & 0.3488 & \textbf{3.12} & 14.1 & 39.9 & 91.8 & --- \\
Poryos2026 & 100 & \textbf{0.0008584} & 0.002156 & 0.006754 & 0.007288 & 0.006907 & \textbf{0.0638} & 0.175 & 0.897 & 1.03 & 0.872 \\
Poryos2026 & 1000 & \textbf{0.02102} & 0.04366 & 0.02586 & 0.05664 & 0.04354 & \textbf{0.212} & 2.34 & 2.57 & 6.57 & 2.66 \\
Rifki2020 & 50 & \textbf{0.001386} & 0.005786 & 0.01232 & 0.03756 & 0.01282 & \textbf{0.149} & 0.757 & 1.81 & 9.30 & 1.92 \\
Rifki2020 & 60 & \textbf{0.002100} & 0.006615 & 0.01599 & 0.02574 & 0.01657 & \textbf{0.240} & 0.672 & 2.45 & 4.45 & 2.57 \\
Vu2020 & 59 & 0.006172 & \textbf{0.004822} & 0.01244 & 0.3057 & 0.01097 & 1.10 & \textbf{0.800} & 1.56 & 86.0 & 1.38 \\
Vu2020 & 99 & 0.005301 & \textbf{0.004415} & 0.01801 & 0.5281 & 0.01752 & 0.866 & \textbf{0.588} & 2.63 & 279 & 2.43 \\
\bottomrule
\end{tabular}
\end{table}

\begin{figure}[htbp]
\centering
\includegraphics[width=\linewidth]{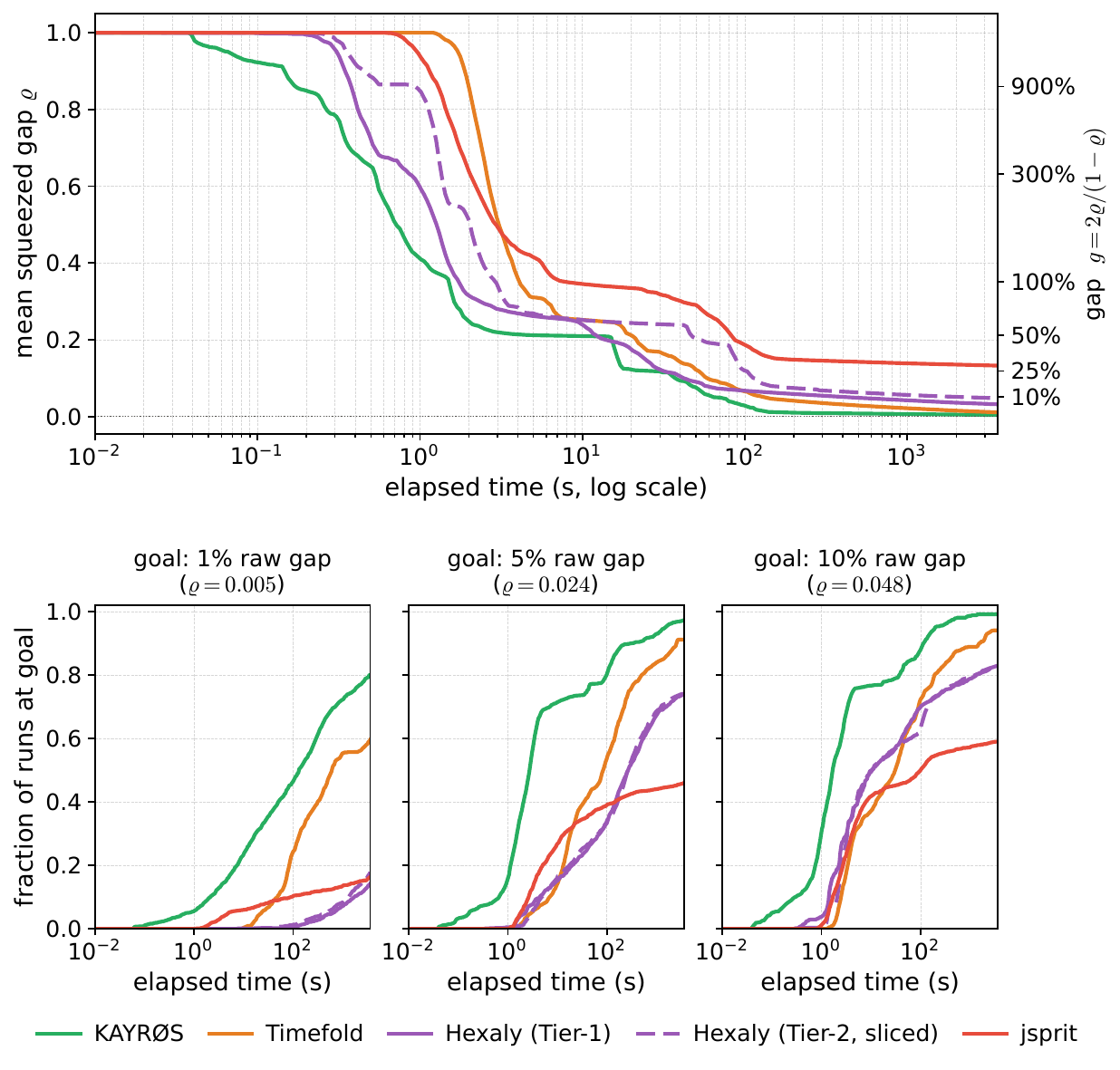}
\caption{Anytime behavior over the whole benchmark, panel-equally weighted over the 10 panels and computed on the same 8\,800 checker-rescored streams as table \ref{tab:pooled}. Top: mean squeezed gap $\sgap$ against elapsed time on a logarithmic time axis, with the classic excess gap $g = (z - z^*)/z^*$ of section \ref{subsec:metrics} as the scale on the right. Bottom: fraction of scheduled runs whose incumbent has reached a 1\%, 5\% and 10\% excess over the reference, with all scheduled runs in the denominator, so a run that never reaches a goal keeps counting against the arm. The time-sliced Hexaly tier is drawn for information and is not part of the ranking. The area under a top-panel curve reproduces the corresponding pooled mean $\ascore$ of table \ref{tab:pooled} to within the grid discretization of the plot. The tables quote the exact estimator and the figure the curve.}
\label{fig:pooled-curves}
\end{figure}

\begin{figure}[htbp]
\centering
\includegraphics[width=\linewidth]{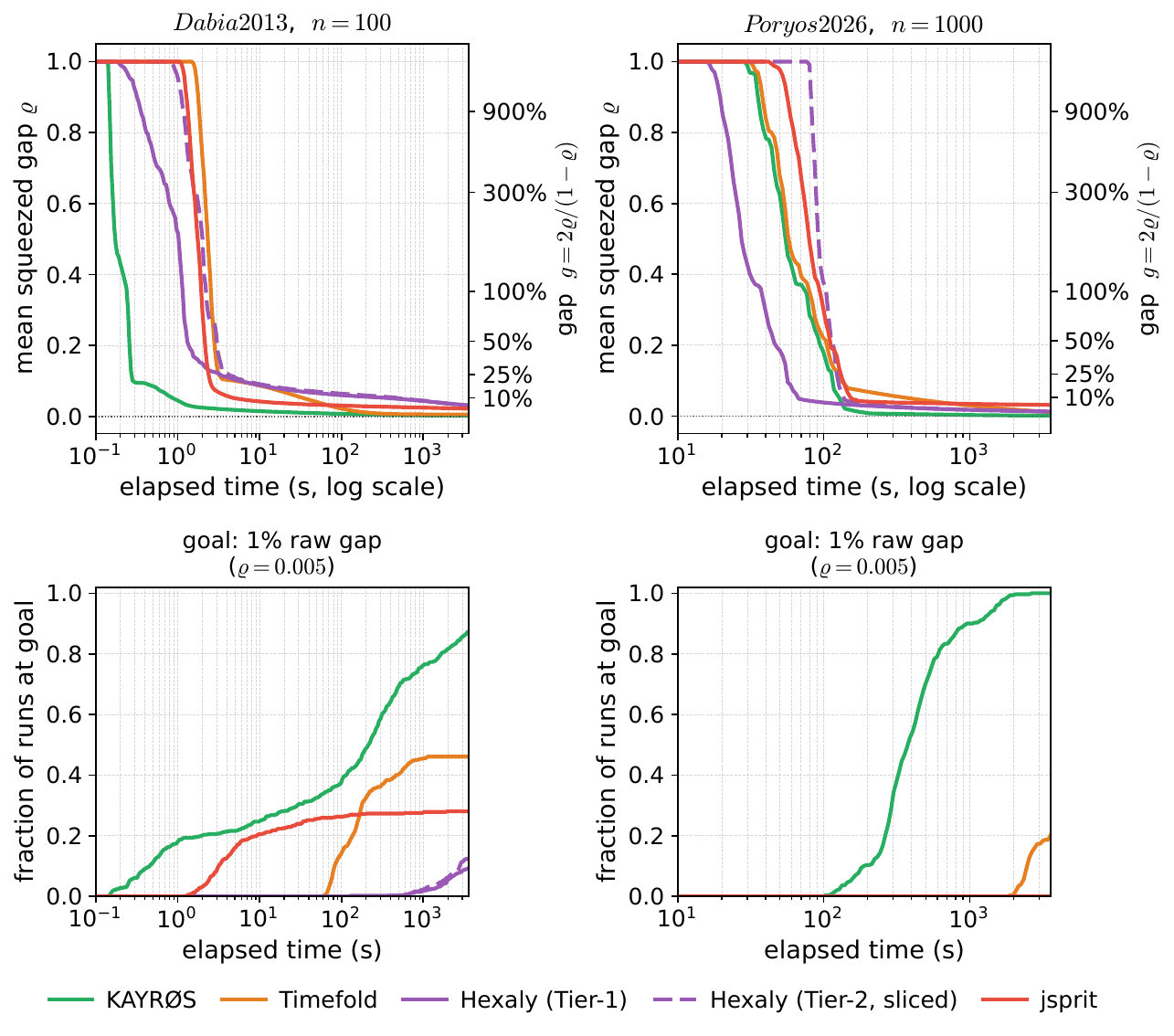}
\caption{Top: mean squeezed gap against elapsed time on the two panels taken whole, the 56 \textit{Dabia2013} instances at $n = 100$ (left, 2\,800 runs) and the 60 \textit{Poryos2026} instances at $n = 1000$ (right, 1\,500 runs), with the gap scale $g$ on the right of each plot. Below each panel, the fraction of that panel's scheduled runs whose incumbent has reached a 1\% excess over the reference, with all scheduled runs in the denominator. The two panels bracket the benchmark. The left one is the classic literature instance set, where 25 of the 56 references are proven optima. The right one is a road-network family at a size no classic time-dependent benchmark reaches, and there the arms separate by an order of magnitude.}
\label{fig:canonical-curves}
\end{figure}

\paragraph{Timefold} Timefold is arguably the strongest contender to beat, and the record should say so rather than let a pooled ratio speak for it. It takes 3 of the 10 panels, and the distance from it to the next arm in table \ref{tab:pooled} is larger than the distance from us to it. It also produced the single largest improvement of the entire campaign: on one \textit{Lera2026} instance at $n = 1000$ it found a solution 13.5\% below the stored record. That solution was folded into the reference like every other campaign improvement, so it is part of the baseline every arm in table \ref{tab:pooled} is scored against.

\paragraph{jsprit} jsprit collapses on two families, with mean scores of 0.31 and 0.53 on the two \textit{Vu2020} panels and 0.12 and 0.30 on the two \textit{Lera2026} panels, far outside its behavior elsewhere. This is a consequence of the model contract recorded in table \ref{tab:arms}, not of the adapter. The solver consumes time-dependent transport-time callbacks but keeps each route at its earliest feasible departure, so it searches for sequences that are good when every route leaves as early as it can. The objective it is scored on, by contrast, is the duration obtained after the checker reschedules those same sequences optimally. Where the two coincide closely, the arm is competitive. On families whose congestion structure makes deliberate waiting the whole point, a solver that cannot postpone a departure is optimizing a different function, and no adapter can repair that from outside. We report the arm at its documented contract rather than dropping it, because a fixed-departure time-dependent solver is a real and common configuration, and because the size of the effect is itself a result.

\paragraph{Hexaly} The commercial contender sits third among the Tier-1 arms, with no panel win and no collapse. Two facts must travel with its number. The score reported here is the one obtained after the measurement-fairness repair of section \ref{subsubsec:threads}, which re-bound the same model through the vendor's C++ external-function interface and lowered the pooled mean $\ascore$ from 0.07577 to 0.04162, so the comparison is made against the better of the two bindings. And the solver is designed for multi-threaded use, a capability the single-core protocol deliberately does not price and which section \ref{subsubsec:threads} measures separately.

\paragraph{The time-sliced tier} The Tier-2 arm pools at 0.05906 against 0.04162 for the same Hexaly build on the rich functions. It is behind Tier-1 on 7 of the 10 panels, taking \textit{Dabia2013} at $n = 50$ and both \textit{Vu2020} panels. Under the tested single-core configurations, the sliced Hexaly arm has a worse pooled score than the external-function arm. Representation and search behavior differ together, so this comparison alone does not identify the source of the gap, a difficult task given the solver is closed-source. On the three panels it does take, the two encodings finish within a quarter of a percentage point of each other. That comparison is descriptive and stops there, for the reason section \ref{sec:discussion} states, and the tier never enters the ranking or the confirmatory family. One contract check must travel with the number. Of the 1\,760 Tier-2 streams, 79 end on a final incumbent the checker rejects (59 on \textit{Lera2026}, 14 on \textit{Dabia2013}, 4 on \textit{Poryos2026} and 2 on \textit{Vu2020}). Separately, 8 runs, all on tight-window \textit{Lera2026} instances at $n = 1000$, produce no checker-valid incumbent at all. A rejected final candidate does not erase an earlier checker-valid incumbent in the other runs. This observed mismatch between model-valid and checker-valid candidates is why the tier is disclosed as a separate model tier of the same solver. Each of the four Tier-1 arms shows zero invalid finals across its own 1\,760 streams. The scores are unaffected by construction, since only checker-valid incumbents ever enter a stream and a run with none is scored at $\sgap = 1$ throughout.

\subsection{Component Evaluations}\label{subsec:components}

\subsubsection{NDCPWLF compositions}\label{subsec:ndcpwlf-benchmark}

\begin{table}[htbp]
\centering
\begin{tabular}{lrr}
\toprule
\textbf{Method} & \textbf{Time (s)} & \textbf{Memory (bytes)} \\
\midrule
\rowcolor{bestmem}
alternative & 1.51137 & 1339072 \\
alternative+tree & 2.02532 & 1350208 \\
lera2020 & 1.44900 & 12258816 \\
lera2020+tree & 0.84868 & 9664512 \\
visser+nor & 0.36802 & 1340736 \\
\rowcolor{besttime}
visser+nor+tree & \textbf{0.35395} & 1428864 \\
visser2020 & 25.35266 & 54170864 \\
visser2020+tree & 0.75250 & 54170944 \\
\bottomrule
\end{tabular}
\caption{Aggregate of the NDCPWLF composition benchmark of section \ref{subsec:ndcpwlf-benchmark}: for each of the 8 method variants, the sum over the 12 grid cells (4 breakpoint bounds by 3 composition-chain sizes) of the mean composition time over 5 repetitions, and the total memory of the composed functions. The fastest variant is shaded green and the most memory-frugal one blue. Per-cell values are in appendix \ref{app:composition}.}
\label{tab:composition-benchmark}
\end{table}

Four composition algorithms were compared, each in a sequential and a tree variant, so eight C++ implementations in all.\footnote{The full benchmarking code, with every implementation and the grid runner, is released at \url{https://github.com/0nyr/pwlf_compare}.} They are \texttt{lera2020} \cite{lera-romeroLinearEdgeCosts2020}; \texttt{visser2020} \cite{visserEfficientMoveEvaluations2020}; \texttt{alternative}, a variant of slightly worse complexity with more aggressive pruning; and \texttt{visser+nor}, our two-pointer formulation of the sweep with active normalization and explicit tail-flush steps. Appendix \ref{app:composition} describes each variant and gives every cell. The benchmark composes randomly generated continuous functions (NDCPWLFs, the jump-free subclass of section \ref{subsec:taxonomy}) standing for arc RTFs, with at most $p \in \{6, 100, 1000, 10000\}$ breakpoints each, along composition chains of $k \in \{2, 100, 1000\}$ functions each ($k - 1$ sequential compositions), and every $(p, k)$ cell is run 5 times. Table \ref{tab:composition-benchmark} aggregates time and memory over the grid. The fastest implementation overall is \texttt{visser+nor+tree}. Less expected, the sequential \texttt{visser+nor} is already about twice as fast as every variant of the other three algorithms, tree variants included, and 69 times faster than the \texttt{visser2020} it extends. The reason is that the redundant breakpoints \texttt{visser2020} keeps are carried through every later composition. On memory, \texttt{alternative} is the most frugal, with \texttt{visser+nor} only 0.1\% above it. Across time and memory, \texttt{visser+nor} is therefore the implementation of choice, in its sequential or its tree form. The event merge \kayros{} ships (algorithm \ref{algo:compose_NDLCF}) performs the same sweep with different bookkeeping and no normalization, for the reason given next.

These gains are measured in a synthetic stress regime that canonical TD instances cannot reach, unrestricted chains of up to $1000$ compositions over functions with up to $10^4$ breakpoints. The time horizon and the TW clamps bound every composition chain, and composed route functions stay orders of magnitude smaller. Random walks on the \textit{Dabia2013} $n{=}100$ instances leave the horizon after roughly $30$ arcs with peak functions of well under $10^2$ breakpoints, and even the heaviest vertical-step families peak below $3 \cdot 10^3$. The stress regime does occur on road-network ATFs, which is where the gains apply. Only the flat/vertical part of the normalization is bit-neutral for exact cost semantics (section \ref{subsec:composition} and appendix \ref{app:taxonomy}), so \kayros{} adopts the exact un-normalized composition as its canonical checker-identical semantics.

\subsubsection{Search strategy head-to-head}

The anytime layer of section \ref{subsec:kayros-anytime} is an ILS, and that is a measured choice. An earlier line of the solver used an ant-colony construction over the same local search. We compared the two head to head, together with a hybrid that warm-starts the iterated local search from the colony's half-budget incumbent, over 20\,808 runs spanning both problem variants, five instance families and sizes from $n = 10$ to $n = 1000$. The iterated local search wins 5\,714 of the 6\,936 paired cells for a mean improvement of 2.52\% in final cost, with a margin that grows with size to 5.44\% at $n = 1000$. At the tested fixed half-budget allocation, the colony warm start did not improve final quality over the iterated local search alone. This evidence is historical with respect to section \ref{subsec:contenders}: it fixed the default strategy at release 0.4.0, long before the 1.6.0 build evaluated here, so it is a decision record and not a same-version ablation on the 212-instance set. Appendix \ref{app:components-ils} gives the full readings, the anytime view of the same verdict and the four small heavily congested instances on which the colony won, which is why it remains available as an option.

\subsubsection{Thread scaling of the commercial contender}\label{subsubsec:threads}

\paragraph{A disclosed change of binding} The Hexaly Tier-1 arm reported above is not the one the first analysis of this campaign produced, and the substitution belongs in the record rather than in a footnote. The pre-committed Tier-1 contract is a rich per-arc piecewise-linear encoding evaluated by an external function. That external function was first bound through the vendor's Python interface, which also declares external functions to the solver as requiring the Global Interpreter Lock (GIL). Rebinding the same model through the vendor's C++ external-function interface multiplies the single-threaded throughput at $n = 100$ by more than 17: the published runs had measured the cost of our bridge as much as the solver behind it. The arm was therefore re-implemented against the C++ binding and the whole Tier-1 slice re-run, on the same model, returning bit-identical values on every gate instance over more than $1.8 \times 10^{5}$ external queries. The descriptive Tier-2 time-sliced arm was rebound and re-run the same way at the same cells. This is a measurement-fairness repair and not a choice among configurations: no confirmatory contender other than Hexaly was re-run, and no parameter was tuned. The instance manifest, the statistical pre-commitment and the frozen scoring reference are the ones fixed before the campaign. None of the solutions the rebuilt arms produced improved on any reference entry, so the baseline every number of this section is scored against is untouched. The repair runs entirely in the contender's favor. Hexaly's pooled mean $\ascore$ falls from 0.07577 under the Python binding to 0.04162, and every comparison reported above is made against the better of the two.

\paragraph{What threads buy} Section \ref{subsec:benchmarks} gives every arm one core, which equalizes resources rather than constraining any particular solver. The convention is still worth defending, because the commercial contender is designed for multi-threaded use, as its authors told us during the campaign. We therefore ran a dedicated thread-scaling study on that solver and report it separately \cite{rascoussierHexalyThreadScaling2026}, with its raw runs on Zenodo \cite{rascoussierHexalyThreadScalingData2026}. It solves 10 instances spanning the five families at one hour per run, on 10 seeds, at 1, 2, 4, 8 and 16 requested threads in both model encodings. Each process is pinned to exactly its requested number of physical cores, for 500 runs per ladder. The two encodings answer differently. The external-function encoding does not convert width into quality: its mean final gap $g_\tlim$ stays between 5.59\% and 5.98\% across the whole ladder, and a single 1000-customer \textit{Lera2026} instance cancels the improvement the 9 others show. However, the time-sliced encoding does: its mean final gap falls by 23.5\% from 1 to 16 threads and its anytime score by 25.4\%, with no saturation at the widest level. What slicing buys is thread scaling, not fidelity traded for throughput. At the single-core budget of this campaign the two encodings are close to a tie on this panel, while the full 212-instance comparison of section \ref{subsec:contenders} puts the exact encoding clearly ahead. None of this changes the convention: it prices a separate trade-off, better and more stable solutions at equal wall time in exchange for up to 16 times the nominal core allocation. Appendix \ref{app:components-threads} reproduces the ladder in table \ref{tab:hexaly-ladder} with the core-occupancy checks, and its gaps are measured against a pre-campaign reference, so they are not comparable with the scores of section \ref{subsec:contenders}.

\subsubsection{Fleet descent and penalty-tolerant squeeze}\label{subsubsec:fleet-eval}

This is the reading of the fleet-aware machinery that section \ref{subsec:kayros-fleet} deferred, including where it fails. Appendix \ref{app:solver-fleet-eval} carries the full figures. Before the fleet-descent phase existed, no run out of 586 matched the reference fleet size on any \textit{Blauth2024} city at $n = 500$ while the summed route durations were already within 5\% of the reference. This located the whole gap in the fleet rather than in the routing. At $n = 500$ the phase settles that, winning the route count 23 times against 0 in an 80-cell paired leg and producing the first solutions to improve on the published references of \citeauthor{blauthVehicleRoutingTimedependent2024} \cite{blauthVehicleRoutingTimedependent2024}. At $n \geq 1000$ it has a ceiling we state as such: across 3\,000 cells the route count reaches the reference on 0.1\% of runs, and no budget or seed volume we ran changed that. The residual duration gap at a matched route count is only 0.3\% to 1.0\%, so what remains at this size is a fleet problem and not a routing problem. The penalty-tolerant squeeze reached the reference fleet on 4 of 30 pairs at $n = 1000$, where the default reached it on 0 of 30. The squeeze failed its pre-committed acceptance gate at $n = 500$, which is why the released default is inert. Searches confined to one vehicle below the published \textit{Blauth2024} record, 90 of them at a 12-hour single-core budget, all ended without attaining the reduced fleet: evidence that the record route counts are tight at that budget.

\subsection{Best-Known Solutions and Certificates}\label{subsec:bks}

\paragraph{Against the exact literature} On \textit{Dabia2013}, \kayros{} holds, at the store state of 2026-08-08, 114 optimality certificates for the TDVRPTW under the \textit{Duration} objective. Of those, 103 reproduce a value published as a proven optimum, and no certificate contradicts one. Another 10 establish a value strictly below a published reference that earlier work left unproven, by 0.06\% to 2.15\%. These are listed per instance in appendix \ref{app:bks}, table \ref{tab:dabia-improvements}. One further instance differs from its published value only through the checker's re-evaluation of the published routes, by less than $10^{-9}$ in relative terms, and stays below the reporting threshold. The comparison base is the canonical re-pricing of the 146 published solutions distributed by \citeauthor{lera-romeroLinearEdgeCosts2020} \cite{lera-romeroLinearEdgeCosts2020}, on the byte-exact canonical data of section \ref{subsec:benchmarks}. That is the only base on which such a comparison is well posed at all.

\paragraph{Records under FleetCostDuration} The \textit{Blauth2024} family \cite{blauthVehicleRoutingTimedependent2024} carries 40 instances, 30 of which come with a high-effort reference solution produced by the authors' own solver at $n = 500$, 1\,000 and 2\,000. Long-budget \kayros{} searches improve all 30 at an unchanged route count on every instance. The improvement runs from 0.032\% to 1.314\% with a mean of 0.254\%, the margin shrinking as size grows. Table \ref{tab:blauth-improvements} in appendix \ref{app:bks} gives every instance. These are accumulated best-effort searches, chained and warm-started over several waves, and section \ref{sec:discussion} states what such a comparison licenses. Read that way, the result says two things at once: that our solver is competitive with a specialized industrial code on measured-traffic delivery instances, and that the reference solutions remain strong baselines that a well-resourced search moves by fractions of a percent.

\paragraph{Certificates} At the store state of 2026-08-08 the public collection holds 704 computational optimality certificates over 2\,072 instances: 466 over the 1\,352 legacy instances of four families in both problem variants, and 238 over the 720 time-dependent \textit{Poryos2026} instances. What one of them establishes, and the four-run protocol that gates it, are stated in section \ref{subsec:kayros-exact}. Coverage runs from $n = 10$ to $n = 100$ and is uneven in an instructive way. On \textit{Vu2020}, 165 of the 168 TDVRPTW instances certify, whereas removing time windows weakens pruning and drives more runs of the same instances to the memory frontier. On \textit{Poryos2026}, almost everything certifies at $n \leq 25$, while only 6 TDVRPTW instances do at $n = 50$ and none above. No certificate is claimed for \textit{Lera2026} or \textit{Blauth2024}, whose sizes or objective lie outside the component's reach. Runs that end open or resource-limited are retained as such rather than downgraded into a weaker claim. Of the certificates, 173 improved a previously stored best-known solution, mostly one of our own heuristic solutions, which measures the relationship between our two solver modes rather than a position against the literature. Certificates have also been withdrawn: one family's entire set of 160 after a travel-time representation defect, and two more after the four-run protocol was re-run from scratch over all 1\,352 legacy instances. Of those 1\,352 legacy instances, 466 certificates were re-derived at their stored values. That record, the re-derivation campaigns it forced, and the per-instance curation behind the tables of this subsection are documented in the companion technical report \cite{rascoussierKAYROSTechReport2026}.

\section{Discussion}\label{sec:discussion}

This section states what the results of section \ref{sec:results} do not establish, and why each boundary is where it is.

\paragraph{What the fleet-cost comparison licenses} The improvements over the published reference solutions under \textit{FleetCostDuration} (section \ref{subsec:bks}) come from long searches, chained and warm-started over several waves, measured against solutions whose own budget and trajectory we do not hold. No anytime, equal-budget or per-unit-of-compute claim is constructible from that, in either direction. The result supports a comparison of final quality between two best-effort processes, and nothing about how either got there. That boundary cannot be pushed from outside, since crossing it would require the other side's traces or its code.

\paragraph{The time-sliced tier has no causal reading} The Tier-2 arm of section \ref{subsec:contenders} is reported for information and never ranked, because the control that would license a causal reading was not run: our own solver on the sliced instances. Without it the representation and the search are confounded, and neither the tier's advantage on its three panels nor its deficit on the other seven can be attributed to one of the two. Its checker-invalid final solutions are an observation about optimizing on an approximation while being scored on the truth, not a measure of the approximation's worth.

\paragraph{Benchmark surface} The benchmark includes two families constructed during this thesis, \textit{Lera2026} and \textit{Poryos2026}, which together occupy two of the five family slots and contribute two fifths of the pooled aggregate under the weighting of section \ref{subsec:metrics}. The original road-network design of \textit{Poryos2026} occupies one of those slots. \textit{Lera2026} extends established benchmark ingredients. These shares were fixed by the protocol before any campaign record existed, and the results are published per panel in full.

\paragraph{The single-core convention} The single-core convention of section \ref{subsec:benchmarks} is defended on measurement in section \ref{subsec:components}. Its scope should still be read narrowly: it prices no claim about multi-threaded behavior. One contender's time-sliced encoding does scale with threads and buys quality that way, whereas \kayros{} is single-threaded by design and takes its parallelism at the campaign level. A multi-core comparison would therefore measure a different trade-off in a different experiment, and it is not one this paper has run.

\paragraph{An earlier decision, honestly dated} The choice of iterated local search over the ant-colony strategy (section \ref{subsec:components}) is a decision record and not an ablation of the build evaluated here. It was measured at an earlier release, on a different instance design, and it fixed a default that has not been re-contested since. It is evidence about two implementations at that point in the solver's history, not a general statement about the two metaheuristic families.

The companion technical report and the public artifacts carry the depth behind several of these results, but no claim here rests on a document a reader cannot check: every number above is regenerated from the frozen campaign artifacts of section \ref{subsec:benchmarks}.

\section{Conclusions and Perspectives}\label{sec:conclusion}

This paper set out four contributions in section \ref{subsec:contributions}, and each can now be read against what precedes. \kayros{} itself is released as an open-source solver that is anytime and exact over one time-dependent engine, under the principle that turned out to be load-bearing throughout: an external, epsilon-free checker defines the objective. Every value the solver publishes is that checker's recomputation of the routes it returns (section \ref{subsec:kayros-arch}). The algorithmic core received the explicit treatment that, to the best of our knowledge, its earlier descriptions had stopped short of. Algorithm \ref{algo:compose_NDLCF} states the composition of non-decreasing left-continuous piecewise-linear functions in full, proved exact under one domain hypothesis that route evaluation always satisfies, with the continuous results of the literature recovered as the jump-free case. That treatment also covers when normalization is exact in floating-point arithmetic, and three independent families of defects that the implicit description of this operation has cost the field (section \ref{subsec:composition}).

The evaluation methodology was applied rather than merely proposed. A normalized signed primal integral, scored against a frozen published reference and aggregated panel-equally under a design pre-committed before any campaign record existed, ranks \kayros{} at a 63.3\% lower pooled score than the strongest of the three solvers a practitioner can actually obtain, with all three contrasts Holm-significant. It also records the three panels where a contender is ahead and the components where our own machinery reaches a ceiling (section \ref{sec:results}). The exact component and its certificate semantics close the set: 704 published optimality certificates, stated as conditional computational results under explicit arithmetic, tolerance and search assumptions, 10 of which establish values below references the exact literature had left unproven. Alongside them come 30 improved high-effort reference solutions under the fleet-cost objective.

Several directions follow from this work. Making the dual bounds of the exact component independently verified safe bounds, by interval arithmetic or by rational certification of the final relaxation, is the single change that would turn a certificate into an artifact checkable without rerunning the solver. The engine and the anytime layer are not specific to the multi-vehicle case, so a port to single-vehicle time-dependent problems, where exact and anytime search is already established \cite{fontaineExactAnytimeApproach2023, fontaineExactAnytimeHeuristic2024}, is a natural next target. Now that the solver is released, a best-known-solution hardening campaign started from published references would test the records of section \ref{subsec:bks} by the only means capable of refuting them, namely a better checker-valid solution found by someone else. The time-sliced observation of section \ref{subsec:contenders} leaves a genuine question open: whether a coarse representation can repay the fidelity it gives up, a question to which the two encodings of the commercial contender answer differently at 1 core and at 16. Answering it calls for a staged-fidelity design in which the search runs on cheap approximations while the canonical checker keeps defining the objective. Two extensions of the model itself remain: the generalized \textit{Travel Time} objective, in which the departure time at every customer becomes a decision \cite{heDynamicDiscretizationDiscovery2022}, and the integration of DDD \cite{bolandPerspectivesIntegerProgramming2019, heDynamicDiscretizationDiscovery2022} into the exact component, the breakpoints of its piecewise-linear functions being exactly the structure such a scheme exploits.

The solver \cite{rascoussierKAYROS2026}, the benchmark instances with the solution store \cite{rascoussierMAMUTrouting2026}, and the campaign data \cite{rascoussierAnytimeSolverComparisonData2026} are public. That is the condition under which everything above can be contested rather than merely believed, and it is the condition under which this work would like to be read.

\section*{Acknowledgements}

I thank my supervisors, Romain Billot, Christine Solnon and Lina Fahed, for their guidance throughout the PhD this work belongs to. I owe a particular debt to Christine, whose decade-long commitment to time-dependent routing, carried through the PhD theses of Pénélope Aguiar Melgarejo \cite{aguiar-melgarejoConstraintProgrammingApproach2016} and Romain Fontaine \cite{fontaineExactAnytimeHeuristic2024, fontaineExactAnytimeApproach2023} before this one, laid the ground this work builds on. This solver stands on open source: I thank Gonzalo Lera-Romero for releasing his branch-price-and-cut solver, on which the exact component of \kayros{} builds, and Leon Lan, Niels Wouda, Wouter Kool and the other PyVRP contributors, whose framework inspired the anytime heuristic layer. I thank Thibaut Vidal, whose open solvers set the standard the field now follows. I thank the solver teams who engaged during the comparison campaign: Geoffrey de Smet and the Timefold team, Maxime Rougier, Fred Gardi and the Hexaly team for the academic license and the exchange on multi-threading, and Jannis Blauth and Dirk Müller for their answers on the Blauth2024 benchmark and its reference solutions. I thank Adrien Pichon for the collaboration on the MAMUT-routing benchmark platform, together with Marc Sevaux and Alexandru-Liviu Olteanu, fellow members of the MAMUT project. I also thank the OpenStreetMap contributors for the geographic data behind the Poryos2026 road networks. I thank Romain Fontaine for his help with Grid'5000, with special thanks to Guillaume Beslon and the BIOTIC team for their support.

Experiments presented in this paper were carried out using the Grid'5000 testbed, supported by a scientific interest group hosted by Inria and including CNRS, RENATER and several universities as well as other organizations (see \url{https://www.grid5000.fr}).

This work is funded by the French National Research Agency (ANR) as part of the MAMUT project, ANR-22-CE22-0016, \enquote{Machine learning And Matheuristics algorithms for Urban Transportation}.

\section*{Data Availability}

\kayros{} 1.6.0, the release evaluated here, is open source under the MIT license \cite{rascoussierKAYROS2026}, on PyPI \cite{pythonsoftwarefoundationPyPI2026} under the name \texttt{kayros} and archived by Software Heritage. The benchmark instances, the canonical checker and the versioned solution store with its certificates are published by the MAMUT-routing platform \cite{rascoussierMAMUTrouting2026}. The campaign archive is deposited on Zenodo under CC BY 4.0 \cite{rascoussierAnytimeSolverComparisonData2026}: 8\,800 solver runs with their checker-validated trajectories, the frozen instance manifest and the reference snapshot of section \ref{subsec:benchmarks}. The analysis code that regenerates every table, statistic and figure of section \ref{sec:results} from that archive is released at \url{https://github.com/0nyr/kayros-campaign-analysis}. The raw runs behind the thread-scaling study of section \ref{subsubsec:threads} are deposited separately, 2\,010 solver runs under CC BY 4.0 \cite{rascoussierHexalyThreadScalingData2026}, accompanying version 2 of the companion report.

\section*{Declaration of AI Use}

Generative AI tools were used throughout the preparation of this work, including frontier models from Anthropic (Claude) and OpenAI (ChatGPT). They contributed to the solver and benchmark tooling, to experiment orchestration and analysis, and to the drafting of this paper. Florian Rascoussier reviewed and revised all outputs from these tools and takes full responsibility for their use in the final content of the work.

\printbibliography

\appendix

\section{Per-Instance Best-Known-Solution Improvements}\label{app:bks}

As part of the validation campaign of the exact reference checker, we re-priced all 146 published best-known solutions of \cite{lera-romeroLinearEdgeCosts2020} for the \textit{Duration} objective on \textit{Dabia2013} under the two readings of the distance model of section \ref{subsec:benchmarks}: the \emph{canonical} one, where the floor-truncated distances $d_{ij} = \lfloor 10\,\lVert \cdot \rVert_2 \rfloor$ (the classic distance letter, kept from the source model and not to be read against the destination depot $d$) are copied byte-exact from the original data, and the \emph{full-precision} one, where distances are re-derived as $10\,\lVert \cdot \rVert_2$ from the Solomon coordinates. On the canonical data, every published BKS is feasible and reproduced exactly by the epsilon-free checker. Under the full-precision reading, the very same routes inherit a mean per-arc inflation of $+0.40$, and 11 of them overshoot a TW deadline by $0.94$ to $4.8$ time units, because optimal \textit{Duration} solutions ride deadlines (appendix \ref{app:taxonomy}). As with the classic VRPTW, where DIMACS-2021-truncated and SINTEF-float evaluations coexist \cite{DIMACS2021, Sintef2008}, this is a matter of evaluation convention rather than an error on either side, but it makes the convention part of any reproducible comparison. The exercise also required correcting a handful of defects in the original 2020 solver, one of the very few open-source TD routing codebases, before it could serve as a reference: two silent index-out-of-bounds reads in the composition routine, a floating-point-to-boolean conversion reducing one TW-tightening rule to a near no-op, a division by zero in the segment-intersection primitive and occasional $10^{-14}$-scale violations of the non-decreasing invariant. None of them affects the 146 published BKS, in which the only observable trace is the solver's epsilon-based comparisons, whose tolerance is $\varepsilon = 10^{-5}$: the published departure time of one route of RC105\_100 stops $10^{-5}$ short of the duration-minimizing breakpoint, within one $\varepsilon$ of it and hence epsilon-equal under the original tolerance, so its published duration is improvable by $10^{-5}$. Per-instance values are given in the thesis appendix and in the public solution store \cite{rascoussierMAMUTrouting2026}.

\begin{table}[htbp]
\centering
\small
\caption{The 10 \textit{Dabia2013} instances for which the \kayros{}-certified optimum lies strictly below a published reference value that earlier work left unproven, under the \textit{Duration} objective on the canonical checker.}
\label{tab:dabia-improvements}
\begin{tabular}{lrrr}
\toprule
Instance & Published best value & \kayros{} certified optimum & Improvement \\
\midrule
RC106\_50 & 11830.264 & 11756.556 & $-0.62$\% \\
R104\_100 & 18344.469 & 17949.848 & $-2.15$\% \\
R107\_100 & 18890.956 & 18704.105 & $-0.99$\% \\
R108\_100 & 17626.206 & 17400.040 & $-1.28$\% \\
R110\_100 & 18786.882 & 18607.788 & $-0.95$\% \\
RC101\_100 & 24778.294 & 24762.734 & $-0.06$\% \\
RC102\_100 & 22689.967 & 22369.064 & $-1.41$\% \\
RC103\_100 & 20680.027 & 20325.853 & $-1.71$\% \\
RC106\_100 & 21382.243 & 21233.331 & $-0.70$\% \\
RC107\_100 & 20101.592 & 19918.207 & $-0.91$\% \\
\bottomrule
\end{tabular}
\end{table}

\begin{table}[htbp]
\centering
\small
\caption{The 30 high-effort BonnTour references of the \textit{Blauth2024} family of Blauth et al., all improved by long-budget \kayros{} searches at an unchanged route count, under the \textit{FleetCostDuration} objective.}
\label{tab:blauth-improvements}
\begin{tabular}{lrrrr}
\toprule
Instance & BonnTour (\$) & \kayros{} (\$) & $\left| \mathcal{S} \right|$ & Improvement \\
\midrule
berlin\_500 & 2826.49 & 2823.83 & 9 & $-0.094$\% \\
cincinnati\_500 & 2906.21 & 2896.52 & 9 & $-0.333$\% \\
kyiv\_500 & 3145.48 & 3143.59 & 10 & $-0.060$\% \\
london\_500 & 4820.82 & 4783.91 & 15 & $-0.766$\% \\
madrid\_500 & 3168.94 & 3167.92 & 10 & $-0.032$\% \\
nairobi\_500 & 3112.93 & 3072.03 & 10 & $-1.314$\% \\
new\_york\_500 & 3417.74 & 3376.62 & 11 & $-1.203$\% \\
san\_francisco\_500 & 3853.58 & 3851.57 & 12 & $-0.052$\% \\
sao\_paulo\_500 & 3909.96 & 3904.53 & 12 & $-0.139$\% \\
seattle\_500 & 3178.87 & 3167.01 & 10 & $-0.373$\% \\
berlin\_1000 & 4980.18 & 4975.37 & 16 & $-0.097$\% \\
cincinnati\_1000 & 5058.90 & 5055.28 & 16 & $-0.072$\% \\
kyiv\_1000 & 5390.71 & 5383.07 & 17 & $-0.142$\% \\
london\_1000 & 7870.40 & 7844.02 & 24 & $-0.335$\% \\
madrid\_1000 & 5420.04 & 5411.69 & 17 & $-0.154$\% \\
nairobi\_1000 & 5310.90 & 5286.61 & 17 & $-0.457$\% \\
new\_york\_1000 & 5454.59 & 5449.87 & 17 & $-0.087$\% \\
san\_francisco\_1000 & 6455.29 & 6437.71 & 20 & $-0.272$\% \\
sao\_paulo\_1000 & 6752.29 & 6734.43 & 21 & $-0.264$\% \\
seattle\_1000 & 5388.10 & 5379.64 & 17 & $-0.157$\% \\
berlin\_2000 & 8849.64 & 8844.57 & 28 & $-0.057$\% \\
cincinnati\_2000 & 8722.92 & 8717.06 & 27 & $-0.067$\% \\
kyiv\_2000 & 9567.13 & 9552.16 & 30 & $-0.157$\% \\
london\_2000 & 13273.04 & 13247.73 & 40 & $-0.191$\% \\
madrid\_2000 & 9308.40 & 9289.93 & 29 & $-0.198$\% \\
nairobi\_2000 & 8887.42 & 8867.89 & 28 & $-0.220$\% \\
new\_york\_2000 & 9560.46 & 9552.46 & 30 & $-0.084$\% \\
san\_francisco\_2000 & 11021.79 & 11010.77 & 34 & $-0.100$\% \\
sao\_paulo\_2000 & 11258.19 & 11247.72 & 35 & $-0.093$\% \\
seattle\_2000 & 9179.54 & 9173.76 & 29 & $-0.063$\% \\
\bottomrule
\end{tabular}
\end{table}

\section{Per-Panel Statistics}\label{app:panels}

This appendix carries the per-panel detail behind table \ref{tab:perpanel}, which prints the panel means without intervals. Table \ref{tab:perpanel-ci} adds the 95 percent bootstrap interval of every arm on every panel, including the descriptive time-sliced Hexaly tier. Table \ref{tab:perpanel-diffs} gives the 30 paired differences of \kayros{} against each Tier-1 contender with their own intervals. Both tables are descriptive and exploratory, as section \ref{subsec:contenders} states: the confirmatory family is the three pooled contrasts alone. No per-panel contrast was pre-committed, and none of the 30 carries a multiplicity correction. The two tables are published in full so that the three differences whose interval contains zero can be read at the same resolution as the 27 that are clear of it.

\begingroup
\small
\setlength{\tabcolsep}{5pt}
\begin{longtable}{lrrlrc}
\caption{Per-panel mean anytime score of every arm with its 95 percent bootstrap interval, the detail behind table \ref{tab:perpanel}. The interval resamples instances with replacement inside the panel, keeping every seed of a resampled instance, over the same $10\,000$ resamples and the same seed as the pooled aggregate. The lowest Tier-1 mean of each panel is in bold and the time-sliced Hexaly tier is reported for information only. All per-panel results are descriptive and exploratory, never confirmatory.}
\label{tab:perpanel-ci} \\
\toprule
Family & $n$ & Inst. & Arm & Mean $\ascore$ & 95\% CI \\
\midrule
\endfirsthead
\toprule
Family & $n$ & Inst. & Arm & Mean $\ascore$ & 95\% CI \\
\midrule
\endhead
\midrule
\multicolumn{6}{r}{\textit{continued on the next page}} \\
\endfoot
\bottomrule
\endlastfoot
Dabia2013 & 50 & 12 & \kayros{} & \textbf{0.001322} & $[0.0002840,\, 0.002962]$ \\*
 & &  & Timefold & 0.01049 & $[0.005095,\, 0.01733]$ \\*
 & &  & Hexaly & 0.01675 & $[0.01050,\, 0.02328]$ \\*
 & &  & jsprit & 0.008715 & $[0.002980,\, 0.01792]$ \\*
 & &  & Hexaly (Tier-2, sliced) & 0.01448 & $[0.008933,\, 0.02022]$ \\
\addlinespace
Dabia2013 & 100 & 56 & \kayros{} & \textbf{0.003198} & $[0.002484,\, 0.003929]$ \\*
 & &  & Timefold & 0.008140 & $[0.006983,\, 0.009292]$ \\*
 & &  & Hexaly & 0.04074 & $[0.03511,\, 0.04635]$ \\*
 & &  & jsprit & 0.02479 & $[0.01945,\, 0.03018]$ \\*
 & &  & Hexaly (Tier-2, sliced) & 0.04130 & $[0.03518,\, 0.04758]$ \\
\addlinespace
Lera2026 & 200 & 12 & \kayros{} & 0.01322 & $[0.006692,\, 0.02029]$ \\*
 & &  & Timefold & \textbf{0.01308} & $[0.01010,\, 0.01625]$ \\*
 & &  & Hexaly & 0.06986 & $[0.05043,\, 0.08942]$ \\*
 & &  & jsprit & 0.1241 & $[0.07512,\, 0.1737]$ \\*
 & &  & Hexaly (Tier-2, sliced) & 0.07768 & $[0.05486,\, 0.1007]$ \\
\addlinespace
Lera2026 & 1000 & 12 & \kayros{} & \textbf{0.02910} & $[0.01799,\, 0.04321]$ \\*
 & &  & Timefold & 0.1290 & $[0.09655,\, 0.1665]$ \\*
 & &  & Hexaly & 0.1975 & $[0.1434,\, 0.2540]$ \\*
 & &  & jsprit & 0.2973 & $[0.2118,\, 0.3767]$ \\*
 & &  & Hexaly (Tier-2, sliced) & 0.3488 & $[0.2071,\, 0.5283]$ \\
\addlinespace
Poryos2026 & 100 & 12 & \kayros{} & \textbf{0.0008584} & $[0.0006223,\, 0.001207]$ \\*
 & &  & Timefold & 0.002156 & $[0.001826,\, 0.002492]$ \\*
 & &  & Hexaly & 0.006754 & $[0.005524,\, 0.008037]$ \\*
 & &  & jsprit & 0.007288 & $[0.005188,\, 0.009560]$ \\*
 & &  & Hexaly (Tier-2, sliced) & 0.006907 & $[0.005584,\, 0.008331]$ \\
\addlinespace
Poryos2026 & 1000 & 60 & \kayros{} & \textbf{0.02102} & $[0.01913,\, 0.02300]$ \\*
 & &  & Timefold & 0.04366 & $[0.03926,\, 0.04846]$ \\*
 & &  & Hexaly & 0.02586 & $[0.02435,\, 0.02748]$ \\*
 & &  & jsprit & 0.05664 & $[0.05349,\, 0.05989]$ \\*
 & &  & Hexaly (Tier-2, sliced) & 0.04354 & $[0.04153,\, 0.04583]$ \\
\addlinespace
Rifki2020 & 50 & 12 & \kayros{} & \textbf{0.001386} & $[0.001037,\, 0.001779]$ \\*
 & &  & Timefold & 0.005786 & $[0.004979,\, 0.006655]$ \\*
 & &  & Hexaly & 0.01232 & $[0.01090,\, 0.01357]$ \\*
 & &  & jsprit & 0.03756 & $[0.009210,\, 0.09152]$ \\*
 & &  & Hexaly (Tier-2, sliced) & 0.01282 & $[0.01109,\, 0.01441]$ \\
\addlinespace
Rifki2020 & 60 & 12 & \kayros{} & \textbf{0.002100} & $[0.001571,\, 0.002649]$ \\*
 & &  & Timefold & 0.006615 & $[0.005996,\, 0.007234]$ \\*
 & &  & Hexaly & 0.01599 & $[0.01504,\, 0.01710]$ \\*
 & &  & jsprit & 0.02574 & $[0.01359,\, 0.04075]$ \\*
 & &  & Hexaly (Tier-2, sliced) & 0.01657 & $[0.01566,\, 0.01741]$ \\
\addlinespace
Vu2020 & 59 & 12 & \kayros{} & 0.006172 & $[0.004098,\, 0.008453]$ \\*
 & &  & Timefold & \textbf{0.004822} & $[0.003557,\, 0.006189]$ \\*
 & &  & Hexaly & 0.01244 & $[0.01117,\, 0.01404]$ \\*
 & &  & jsprit & 0.3057 & $[0.2545,\, 0.3585]$ \\*
 & &  & Hexaly (Tier-2, sliced) & 0.01097 & $[0.009342,\, 0.01270]$ \\
\addlinespace
Vu2020 & 99 & 12 & \kayros{} & 0.005301 & $[0.003456,\, 0.007544]$ \\*
 & &  & Timefold & \textbf{0.004415} & $[0.003568,\, 0.005303]$ \\*
 & &  & Hexaly & 0.01801 & $[0.01554,\, 0.02061]$ \\*
 & &  & jsprit & 0.5281 & $[0.4494,\, 0.6167]$ \\*
 & &  & Hexaly (Tier-2, sliced) & 0.01752 & $[0.01513,\, 0.01989]$ \\
\end{longtable}
\endgroup

\begingroup
\small
\setlength{\tabcolsep}{5pt}
\begin{longtable}{lrrlrc}
\caption{Per-panel paired difference on mean anytime score, \kayros{} minus contender, with its 95 percent bootstrap interval. A negative value favors \kayros{}. The three rows marked $\dagger$ are the ones whose interval contains zero, all against Timefold, and the \textit{Lera2026} $n = 200$ difference of 0.0001 is the closest of the 30. The remaining 27 intervals lie entirely below zero. These contrasts are descriptive and exploratory, carry no multiplicity correction and are never confirmatory: the confirmatory family is the three pooled contrasts of table \ref{tab:pooled}.}
\label{tab:perpanel-diffs} \\
\toprule
Family & $n$ & Inst. & Contender & Mean difference & 95\% CI \\
\midrule
\endfirsthead
\toprule
Family & $n$ & Inst. & Contender & Mean difference & 95\% CI \\
\midrule
\endhead
\midrule
\multicolumn{6}{r}{\textit{continued on the next page}} \\
\endfoot
\bottomrule
\endlastfoot
Dabia2013 & 50 & 12 & Timefold & $-0.009172$ & $[-0.01603,\, -0.003976]$ \\*
 & &  & Hexaly & $-0.01543$ & $[-0.02167,\, -0.009422]$ \\*
 & &  & jsprit & $-0.007393$ & $[-0.01511,\, -0.002476]$ \\
\addlinespace
Dabia2013 & 100 & 56 & Timefold & $-0.004941$ & $[-0.005692,\, -0.004195]$ \\*
 & &  & Hexaly & $-0.03754$ & $[-0.04259,\, -0.03233]$ \\*
 & &  & jsprit & $-0.02160$ & $[-0.02640,\, -0.01701]$ \\
\addlinespace
Lera2026 & 200 & 12 & Timefold$^{\dagger}$ & $0.0001362$ & $[-0.005301,\, 0.005966]$ \\*
 & &  & Hexaly & $-0.05664$ & $[-0.07223,\, -0.04179]$ \\*
 & &  & jsprit & $-0.1109$ & $[-0.1548,\, -0.06751]$ \\
\addlinespace
Lera2026 & 1000 & 12 & Timefold & $-0.09992$ & $[-0.1379,\, -0.06296]$ \\*
 & &  & Hexaly & $-0.1684$ & $[-0.2238,\, -0.1167]$ \\*
 & &  & jsprit & $-0.2682$ & $[-0.3435,\, -0.1889]$ \\
\addlinespace
Poryos2026 & 100 & 12 & Timefold & $-0.001298$ & $[-0.001690,\, -0.0008890]$ \\*
 & &  & Hexaly & $-0.005896$ & $[-0.006980,\, -0.004822]$ \\*
 & &  & jsprit & $-0.006429$ & $[-0.008530,\, -0.004475]$ \\
\addlinespace
Poryos2026 & 1000 & 60 & Timefold & $-0.02264$ & $[-0.02690,\, -0.01874]$ \\*
 & &  & Hexaly & $-0.004836$ & $[-0.006175,\, -0.003530]$ \\*
 & &  & jsprit & $-0.03562$ & $[-0.03803,\, -0.03319]$ \\
\addlinespace
Rifki2020 & 50 & 12 & Timefold & $-0.004400$ & $[-0.005130,\, -0.003703]$ \\*
 & &  & Hexaly & $-0.01093$ & $[-0.01222,\, -0.009515]$ \\*
 & &  & jsprit & $-0.03617$ & $[-0.09013,\, -0.007751]$ \\
\addlinespace
Rifki2020 & 60 & 12 & Timefold & $-0.004515$ & $[-0.004911,\, -0.004120]$ \\*
 & &  & Hexaly & $-0.01389$ & $[-0.01501,\, -0.01282]$ \\*
 & &  & jsprit & $-0.02364$ & $[-0.03871,\, -0.01128]$ \\
\addlinespace
Vu2020 & 59 & 12 & Timefold$^{\dagger}$ & $0.001349$ & $[-0.001262,\, 0.004114]$ \\*
 & &  & Hexaly & $-0.006273$ & $[-0.008307,\, -0.004202]$ \\*
 & &  & jsprit & $-0.2995$ & $[-0.3479,\, -0.2482]$ \\
\addlinespace
Vu2020 & 99 & 12 & Timefold$^{\dagger}$ & $0.0008866$ & $[-0.0007802,\, 0.002792]$ \\*
 & &  & Hexaly & $-0.01271$ & $[-0.01484,\, -0.01053]$ \\*
 & &  & jsprit & $-0.5228$ & $[-0.6077,\, -0.4440]$ \\
\end{longtable}
\endgroup

\section{NDCPWLF Composition Benchmark Grid}\label{app:composition}

The four algorithms, each in a sequential and a \texttt{+tree} variant, are the following. \texttt{lera2020} is the open-source implementation of \citeauthor{lera-romeroLinearEdgeCosts2020} \cite{lera-romeroLinearEdgeCosts2020}, which imposes normalization, that is, the active removal of redundant breakpoints. \texttt{visser2020} is the algorithm described in \cite{visserEfficientMoveEvaluations2020}, of complexity $\mathcal{O}(\phi_f + \phi_g)$. \texttt{alternative} is a variant of slightly worse complexity, $\mathcal{O}((\phi_f + \phi_g) \log_2 (\phi_f + \phi_g))$, with more aggressive pruning. \texttt{visser+nor} is \texttt{visser2020} with active normalization and explicit tail-flush steps, our two-pointer formulation of the sweep. The event merge shipped in \kayros{} (algorithm \ref{algo:compose_NDLCF}) performs the same $\mathcal{O}(\phi_f + \phi_g)$ sweep with different bookkeeping and was not part of this benchmark, whose functions are continuous. The \texttt{+tree} variant of each uses the tree-based composition of \cite{visserEfficientMoveEvaluations2020, blauthVehicleRoutingTimedependent2024}, which performs its compositions in $\log_2(k)$ levels rather than sequentially along the $k$ composed functions and is therefore expected to scale better on long composition chains. The tree variant of \texttt{visser2020} recovers most of the time that its sequential form loses to \texttt{visser+nor}, and none of the memory, which is where normalization separates the methods.

Table \ref{tab:composition-grid-time} and table \ref{tab:composition-grid-memory} give the per-cell detail behind the aggregate of section \ref{subsec:ndcpwlf-benchmark}, with one row per pair formed by a maximum breakpoint count $p$ and a number of composed functions $k$, and one column per algorithm in its sequential and in its tree variant. Each time cell is the mean over the 5 repetitions of that configuration, and each memory cell is the size of the resulting composed function. The totals printed at the bottom of the two tables are therefore the two columns of table \ref{tab:composition-benchmark}. The grid shows how concentrated that aggregate is: every cell at $k = 2$ costs at most $1.3$ milliseconds for every method, the four rows at $k = 1000$ carry between $88.5\%$ and $99.2\%$ of each method's total time, and the single largest cell, at $p = 10^4$ and $k = 10^3$, accounts on its own for $90\%$ of the \texttt{visser2020} total. The synthetic stress regime discussed in section \ref{subsec:ndcpwlf-benchmark} is precisely those bottom rows, and the caveat stated there applies to them without being repeated here.

\begin{table}[htbp]
\centering
\small
\setlength{\tabcolsep}{3pt}
\caption{Per-cell composition time in seconds of the NDCPWLF benchmark of section \ref{subsec:ndcpwlf-benchmark}, for every maximum breakpoint count $p$ and every number of composed functions $k$, with the sequential and the tree variant of each of the four algorithms. Each cell is the mean over the 5 repetitions of that configuration, and the fastest method of each row is in bold when it is unique. The last row repeats the totals of table \ref{tab:composition-benchmark}, taken from the same recorded run at full precision. Cells are printed with the six significant digits of the benchmark log, so a column re-summed from them can differ from its total in the last digit. The stress-regime caveat of section \ref{subsec:ndcpwlf-benchmark} applies to every cell here, and to the bottom right corner of the grid above all.}
\label{tab:composition-grid-time}
\begin{tabular}{rrrrrrrrrr}
\toprule
& & \multicolumn{2}{c}{\texttt{alternative}} & \multicolumn{2}{c}{\texttt{lera2020}} & \multicolumn{2}{c}{\texttt{visser+nor}} & \multicolumn{2}{c}{\texttt{visser2020}} \\
\cmidrule(lr){3-4} \cmidrule(lr){5-6} \cmidrule(lr){7-8} \cmidrule(lr){9-10}
$p$ & $k$ & seq. & tree & seq. & tree & seq. & tree & seq. & tree \\
\midrule
6 & 2 & $2.24\mathrm{e}{-6}$ & $2.96\mathrm{e}{-6}$ & $2.10\mathrm{e}{-6}$ & $1.91\mathrm{e}{-6}$ & {\boldmath$1.24\mathrm{e}{-6}$} & $1.76\mathrm{e}{-6}$ & $1.29\mathrm{e}{-6}$ & $1.53\mathrm{e}{-6}$ \\
6 & 100 & $8.47\mathrm{e}{-5}$ & $1.17\mathrm{e}{-4}$ & $7.48\mathrm{e}{-5}$ & $8.75\mathrm{e}{-5}$ & {\boldmath$5.46\mathrm{e}{-5}$} & $7.55\mathrm{e}{-5}$ & $5.60\mathrm{e}{-5}$ & $7.89\mathrm{e}{-5}$ \\
6 & 1000 & $3.95\mathrm{e}{-4}$ & $1.18\mathrm{e}{-3}$ & $4.07\mathrm{e}{-4}$ & $9.45\mathrm{e}{-4}$ & {\boldmath$3.77\mathrm{e}{-4}$} & $7.84\mathrm{e}{-4}$ & $3.92\mathrm{e}{-4}$ & $7.95\mathrm{e}{-4}$ \\
\addlinespace
100 & 2 & $1.29\mathrm{e}{-5}$ & $1.08\mathrm{e}{-5}$ & $8.68\mathrm{e}{-6}$ & $6.72\mathrm{e}{-6}$ & $3.29\mathrm{e}{-6}$ & $3.86\mathrm{e}{-6}$ & {\boldmath$2.62\mathrm{e}{-6}$} & $3.24\mathrm{e}{-6}$ \\
100 & 100 & $1.75\mathrm{e}{-2}$ & $4.89\mathrm{e}{-3}$ & $9.10\mathrm{e}{-3}$ & $1.72\mathrm{e}{-3}$ & $3.38\mathrm{e}{-3}$ & $8.28\mathrm{e}{-4}$ & $2.30\mathrm{e}{-3}$ & {\boldmath$6.13\mathrm{e}{-4}$} \\
100 & 1000 & $9.95\mathrm{e}{-1}$ & $7.23\mathrm{e}{-2}$ & $6.88\mathrm{e}{-1}$ & $2.51\mathrm{e}{-2}$ & $2.38\mathrm{e}{-1}$ & $1.24\mathrm{e}{-2}$ & $2.09\mathrm{e}{-1}$ & {\boldmath$9.23\mathrm{e}{-3}$} \\
\addlinespace
1000 & 2 & $1.88\mathrm{e}{-4}$ & $1.82\mathrm{e}{-4}$ & $5.04\mathrm{e}{-5}$ & $4.89\mathrm{e}{-5}$ & $2.00\mathrm{e}{-5}$ & $1.90\mathrm{e}{-5}$ & $1.42\mathrm{e}{-5}$ & {\boldmath$1.19\mathrm{e}{-5}$} \\
1000 & 100 & $1.12\mathrm{e}{-1}$ & $4.76\mathrm{e}{-2}$ & $4.98\mathrm{e}{-2}$ & $1.52\mathrm{e}{-2}$ & $2.06\mathrm{e}{-2}$ & $6.94\mathrm{e}{-3}$ & $3.25\mathrm{e}{-2}$ & {\boldmath$6.73\mathrm{e}{-3}$} \\
1000 & 1000 & $2.28\mathrm{e}{-1}$ & $4.83\mathrm{e}{-1}$ & $4.04\mathrm{e}{-1}$ & $1.85\mathrm{e}{-1}$ & {\boldmath$4.61\mathrm{e}{-2}$} & $7.32\mathrm{e}{-2}$ & $2.03\mathrm{e}{0}$ & $8.74\mathrm{e}{-2}$ \\
\addlinespace
10000 & 2 & $1.30\mathrm{e}{-3}$ & $1.22\mathrm{e}{-3}$ & $3.29\mathrm{e}{-4}$ & $3.20\mathrm{e}{-4}$ & $1.51\mathrm{e}{-4}$ & $1.53\mathrm{e}{-4}$ & {\boldmath$1.22\mathrm{e}{-4}$} & $1.26\mathrm{e}{-4}$ \\
10000 & 100 & $4.24\mathrm{e}{-2}$ & $1.26\mathrm{e}{-1}$ & $3.70\mathrm{e}{-2}$ & $5.42\mathrm{e}{-2}$ & {\boldmath$9.84\mathrm{e}{-3}$} & $2.17\mathrm{e}{-2}$ & $2.00\mathrm{e}{-1}$ & $4.14\mathrm{e}{-2}$ \\
10000 & 1000 & $1.15\mathrm{e}{-1}$ & $1.29\mathrm{e}{0}$ & $2.60\mathrm{e}{-1}$ & $5.66\mathrm{e}{-1}$ & {\boldmath$5.01\mathrm{e}{-2}$} & $2.38\mathrm{e}{-1}$ & $2.29\mathrm{e}{1}$ & $6.06\mathrm{e}{-1}$ \\
\midrule
\multicolumn{2}{l}{Total} & 1.51137 & 2.02532 & 1.44900 & 0.84868 & 0.36802 & 0.35395 & 25.35266 & 0.75250 \\
\bottomrule
\end{tabular}
\end{table}

\begin{table}[htbp]
\centering
\small
\setlength{\tabcolsep}{3pt}
\caption{Per-cell memory of the composed function in bytes for the same benchmark and the same cells as table \ref{tab:composition-grid-time}, with the smallest value of each row in bold when it is unique. The last row is the exact integer sum of the 12 cells of its column, and reproduces the memory column of table \ref{tab:composition-benchmark}. The deepest chains separate the methods most: at $k = 1000$ and $p \geq 1000$, the composed functions of \texttt{visser2020}, which removes no redundant breakpoint, are several orders of magnitude larger than those of the methods that normalize.}
\label{tab:composition-grid-memory}
\begin{tabular}{rrrrrrrrrr}
\toprule
& & \multicolumn{2}{c}{\texttt{alternative}} & \multicolumn{2}{c}{\texttt{lera2020}} & \multicolumn{2}{c}{\texttt{visser+nor}} & \multicolumn{2}{c}{\texttt{visser2020}} \\
\cmidrule(lr){3-4} \cmidrule(lr){5-6} \cmidrule(lr){7-8} \cmidrule(lr){9-10}
$p$ & $k$ & seq. & tree & seq. & tree & seq. & tree & seq. & tree \\
\midrule
6 & 2 & 200 & 200 & 656 & 656 & 200 & 200 & 200 & 200 \\
6 & 100 & 56 & 56 & 80 & 80 & 56 & 56 & 56 & 56 \\
6 & 1000 & 56 & 56 & 80 & 80 & 56 & 56 & 56 & 56 \\
\addlinespace
100 & 2 & 3224 & 3224 & 18512 & 18512 & 3224 & 3224 & 3224 & 3224 \\
100 & 100 & 142264 & 142264 & 1179728 & 1179728 & 142264 & 142264 & 144808 & 144808 \\
100 & 1000 & \textbf{596440} & 596744 & 4718672 & 4718672 & 596520 & 597240 & 1060664 & 1060664 \\
\addlinespace
1000 & 2 & 30888 & 30888 & 147536 & 147536 & 30888 & 30888 & 31416 & 31416 \\
1000 & 100 & \textbf{394232} & 405000 & 2359376 & 2359376 & 395816 & 408120 & 1493864 & 1493864 \\
1000 & 1000 & 1448 & 1512 & 1179728 & 18512 & 1448 & 30968 & 7766648 & 7766680 \\
\addlinespace
10000 & 2 & 157544 & 157544 & 1179728 & 1179728 & 157544 & 157544 & 210296 & 210296 \\
10000 & 100 & 11464 & 11464 & 1179728 & 36944 & 11464 & 44584 & 9770888 & 9770904 \\
10000 & 1000 & 1256 & 1256 & 294992 & 4688 & 1256 & 13720 & 33688744 & 33688776 \\
\midrule
\multicolumn{2}{l}{Total} & 1339072 & 1350208 & 12258816 & 9664512 & 1340736 & 1428864 & 54170864 & 54170944 \\
\bottomrule
\end{tabular}
\end{table}

\section{Function Taxonomy: Properties, Complexity and Exactness}\label{app:taxonomy}

This appendix carries the property and complexity statements of the six functions of section \ref{subsec:taxonomy}, in the order of that section and under the signatures given there. It also gives the proof of theorem \ref{theorem:comp_of_rrtf}, the proofs of the composition theorems and propositions of section \ref{subsec:taxonomy} with the example of what fails without hypothesis (H), and the full exactness analysis of the composition of section \ref{subsec:composition} with the evidence behind the three defect families it summarizes.

\subsection{Properties and complexity}

\paragraph{Arc Travel Time Function (TTF)}

\noindent\textbf{Properties:} The function $\tau_{ij}$ \cite{dabiaBranchPriceTimeDependent2013, lera-romeroLinearEdgeCosts2020} is a left-continuous PWL function over $\mathcal{T}_{\mathrm{arc}}$ whose values are durations, hence the codomain $\mathbb{R}_{\geq 0}$ rather than a time set. It satisfies the FIFO condition in the strong sense, as the associated arrival function $\alpha_{ij}(t) = t + \tau_{ij}(t)$ is non-decreasing. Consequently, $\tau_{ij}$ has slope bounded below by $-1$, i.e. $\frac{d \tau_{ij}(t)}{dt} \geq -1$ wherever the derivative is defined \cite{fontaineExactAnytimeHeuristic2024}. The number of breakpoints, counted as chain points with duplicate abscissae included, is bounded by the instance-wide parameter $p \in \mathbb{N}_{>0}$: $\forall \langle i, j \rangle \in \mathcal{A}, \phi_{\tau_{ij}} \leq p$.

\noindent\textbf{Complexity:} Considered given by a TDVRPTW instance, so $\mathcal{O}(1)$ in time and $\mathcal{O}(2p) = \mathcal{O}(p)$ in memory.

\paragraph{Arc Arrival Time Function (ATF)}

\noindent\textbf{Properties:} The function $\alpha_{ij}$ \cite{visserEfficientMoveEvaluations2020, blauthVehicleRoutingTimedependent2024, fontaineExactAnytimeHeuristic2024} is an NDLCF over $\mathcal{T}_{\mathrm{arc}}$, non-decreasing because it is FIFO-compliant, i.e. $\forall t, t' \in \mathcal{T}_{\mathrm{arc}}, t < t' \Rightarrow \alpha_{ij}(t) \leq \alpha_{ij}(t')$ \cite{fontaineExactAnytimeHeuristic2024}. By definition, it satisfies $\forall t \in \mathcal{T}_{\mathrm{arc}}, \alpha_{ij}(t) \geq t$, so it belongs to $\mathcal{F}^{\text{anc}}_{a_{\mathrm{arc}}}$ (proposition \ref{prop:taxonomy}). In particular, the codomain is $\mathcal{T}_{\mathrm{ext}}$ and not $\mathcal{T}$, since late departures may arrive past the horizon end $T$. The canonical data contract of the \kayros{} benchmark suite makes every ATF total on $\mathcal{T}_{\mathrm{arc}}$ precisely to give such late arrivals a well-defined value. Note that the triangle inequality may not hold, i.e. there may exist $i, j, k \in \mathcal{V}$ such that $\alpha_{ik}(t) \geq (\alpha_{jk} \circ \alpha_{ij})(t)$ \cite{lera-romeroLinearEdgeCosts2020}. The function admits at most $p$ breakpoints, as it is directly derived from $\tau_{ij}$ \cite{blauthVehicleRoutingTimedependent2024}.

\noindent\textbf{Complexity:} Computed in $\mathcal{O}(1)$ from the TTF, thus with no need for duplication in memory.

\paragraph{Vertex TW Ready Time Lower Bound Function (vertex RTF)}

\noindent\textbf{Properties:} The function $\theta_i$ of a customer $i \in \mathcal{C}$ \cite{visserEfficientMoveEvaluations2020} is a continuous PWL function over $[0, l_i] \subseteq \mathcal{T}$ and is non-decreasing by construction. It encodes both the time window and the service duration at that customer. If $e_i = 0$, the waiting piece is empty and $\theta_i(t) = t + s_i$ over $[0, l_i]$, a single affine piece with two breakpoints, whatever the value of $s_i$ (the identity when in addition $s_i = 0$, and a single breakpoint $(0, s_i)$ in the doubly degenerate case $l_i = e_i = 0$). If $e_i > 0$, $\theta_i$ has two pieces (a flat waiting piece, then a slope-one piece) and three breakpoints, except in the degenerate single-instant case $e_i = l_i$, where the slope-one piece vanishes and two breakpoints remain. The depot conventions of sections \ref{subsec:dm-def} and \ref{subsec:taxonomy} make both depot maps identities, $\theta_d = \mathrm{Id}_{[0, l_d]}$ and $\theta_o = \mathrm{Id}_{[e_o, l_o] \cap \mathcal{T}_{\mathrm{arc}}}$, the latter being the innermost identity of every route fold.

\noindent\textbf{Complexity:} For a customer $i \in \mathcal{C}$, the function $\theta_i$ has one to three breakpoints per the case analysis above, and each depot identity has two. Hence it is computed in $\mathcal{O}(1)$ and requires a constant amount of memory, stored as TWs.

\paragraph{Arc Ready Time Function (arc RTF)}

\noindent\textbf{Properties:} We introduce the function $\delta_{ij}$ as the composition of two NDLCFs, the pair $(\theta_j, \alpha_{ij})$ satisfying hypothesis (H) since $a_{\theta_j} = 0 \leq a_{\mathrm{arc}} \leq \alpha_{ij}(a_{\mathrm{arc}})$. From theorem \ref{theorem:closure}, it is also an NDLCF over its domain $\dom(\langle i, j \rangle) = [a_{\mathrm{arc}}, u_{\delta_{ij}}]$, the largest initial segment of $\mathcal{T}_{\mathrm{arc}}$ such that the arrival at $j$ respects its TW deadline $l_j$. When this domain is empty, i.e. when $\alpha_{ij}(a_{\mathrm{arc}}) > l_j$, the arc can never be traversed feasibly and is removed in preprocessing. This formalization makes explicit the domain restriction that the definition of \citeauthor{visserEfficientMoveEvaluations2020} \cite{visserEfficientMoveEvaluations2020} leaves implicit. Note the namespace proximity with the classic outgoing-arcs notation $\delta^+(i)$: the ready time functions always carry vertex-pair or route subscripts, which removes the ambiguity.

\noindent\textbf{Complexity:} From theorem \ref{theorem:merge}, the function $\delta_{ij}$ has $\phi_{\delta_{ij}} \leq \phi_{\alpha_{ij}} + \phi_{\theta_j} \leq p + 3$, hence $\mathcal{O}(p)$ breakpoints, so it has $\mathcal{O}(p)$ time and memory complexity. Since there are $\mathcal{O}(\left| \mathcal{A} \right|) = \mathcal{O}(\left| \mathcal{V} \right|^2) = \mathcal{O}(n^2)$ arcs, precomputing all the arc RTFs costs $\mathcal{O}(n^2 p)$ in time and memory.

\paragraph{Route Ready Time Function (route RTF)}

\noindent\textbf{Properties:} By propositions \ref{prop:temporal} and \ref{prop:taxonomy}, each $\delta_{\mathbf{r}_k}$ \cite{dabiaBranchPriceTimeDependent2013, lera-romeroLinearEdgeCosts2020, visserEfficientMoveEvaluations2020} is an NDLCF of $\mathcal{F}^{\text{in}}_{a_{\mathrm{arc}}}$ defined recursively from arc-level components, every composition of the recursion satisfying (H). The overall route ready time $\delta_{\mathbf{r}}$ is an NDLCF over its recursively computed domain $\dom(\mathbf{r}) \subseteq [e_o, l_o] \cap \mathcal{T}_{\mathrm{arc}}$, non-empty exactly when $\mathbf{r}$ is feasible (section \ref{subsec:dm-def}). Note the codomains: a prefix ready time $\delta_{\mathbf{r}_k}(t) \leq l_{v_k} + s_{v_k}$ may exceed the horizon end $T$ at an intermediate customer, hence $\mathcal{T}_{\mathrm{ext}}$, while the complete route ends upon arrival at the depot $d$, no later than $l_d$ by the destination clamp $\theta_d = \mathrm{Id}_{[0, l_d]}$. Stated differently, partial-function composition being associative, $\delta_{\mathbf{r}} = (\theta_{v_m} \circ \alpha_{v_{m-1} v_m} \circ \dots \circ \theta_{v_k} \circ \alpha_{v_{k-1} v_k} \circ \dots \circ \theta_{v_2} \circ \alpha_{v_1 v_2})(t) = (\delta_{v_{m-1} d} \circ \dots \circ \delta_{v_{k-1} v_k} \circ \dots \circ \delta_{o v_2})(t)$.

\noindent\textbf{Complexity:} The BBT $\bbt^{\mathbf{r}}$ of section \ref{subsec:taxonomy} is computed bottom to top, from the $m - 1$ arc RTFs at its leaves to $\delta_{\mathbf{r}}$ at its root, over its $\mathit{BBTL} = \left\lceil \log_2(m - 1) \right\rceil + 1$ levels (for $m = 2$, the single arc RTF is the root and no composition is needed), following the recursion displayed there \cite{visserEfficientMoveEvaluations2020, blauthVehicleRoutingTimedependent2024}. Every node is in $\mathcal{F}^{\text{anc}}_{a_{\mathrm{arc}}}$, so that (H) holds and theorem \ref{theorem:merge} applies at each internal composition. Its cost, and hence that of $\delta_\mathbf{r}$, is that of theorem \ref{theorem:comp_of_rrtf}, proved here.

\begin{proof}
At the bottom level $\ell = 1$, the path's $m$ vertices define $m - 1$ arcs and as many arc RTFs, the leaves of $\bbt^{\mathbf{p}}$, obtained in $\mathcal{O}(1)$ time each thanks to preprocessing and carrying $\mathcal{O}(p)$ breakpoints each. Each level $\ell \in [2, \mathit{BBTL}]_{\mathbb{N}}$ has $\left\lceil (m - 1) / 2^{\ell - 1} \right\rceil$ nodes, each computing the composition $\delta_{v_i v_j}^{\mathbf{p}} = \delta_{v_k v_j}^{\mathbf{p}} \circ \delta_{v_i v_k}^{\mathbf{p}}$ of its two children from level $\ell - 1$ (a node left without a sibling passes its function up unchanged, at no cost). A node at level $\ell$ covers at most $2^{\ell - 1}$ consecutive arcs, so by repeated application of theorem \ref{theorem:merge} its function has $\mathcal{O}(2^{\ell - 1} p)$ breakpoints, and composing its two children, of $\mathcal{O}(2^{\ell - 2} p)$ breakpoints each, costs $\mathcal{O}(2^{\ell - 1} p)$ operations. Level $\ell$ therefore costs $\left\lceil (m - 1) / 2^{\ell - 1} \right\rceil \cdot \mathcal{O}(2^{\ell - 1} p) \subseteq \mathcal{O}((m - 1) p + 2^{\ell - 1} p) \subseteq \mathcal{O}(m p)$ in time and memory, the last inclusion because $2^{\ell - 1} \leq 2^{\mathit{BBTL} - 1} = 2^{\left\lceil \log_2(m - 1) \right\rceil} < 2 (m - 1)$. Summing over the $\mathit{BBTL} - 1$ composing levels, computing $\bbt^{\mathbf{p}}$ costs $\mathcal{O}((\mathit{BBTL} - 1) \cdot m p) = \mathcal{O}(\left\lceil \log_2(m - 1) \right\rceil \cdot m p) \subseteq \mathcal{O}(m p \log_2(m))$ in time and memory, which is also the complexity of obtaining $\delta_{\mathbf{p}}$ at the root, whose representation has $\mathcal{O}(\phi_{\delta_{\mathbf{p}}}) = \mathcal{O}(m p)$ breakpoints. Remark that without the arc-RTF preprocessing, the leaves require $m - 1$ additional compositions $\theta_{v_{k+1}} \circ \alpha_{v_k v_{k+1}}$ of $\mathcal{O}(p)$-breakpoint functions, an extra $\mathcal{O}(m p)$ that leaves the complexity unchanged \cite{visserEfficientMoveEvaluations2020, blauthVehicleRoutingTimedependent2024}.
\end{proof}

\paragraph{Route Duration Function (RDF)}

\noindent\textbf{Properties:} The function $\Delta_\mathbf{r}$ \cite{lera-romeroLinearEdgeCosts2020, panHybridAlgorithmTimedependent2021, visserEfficientMoveEvaluations2020} is a left-continuous PWL function that gives the duration (a value of $\mathbb{R}_{\geq 0}$, not a date of the horizon) of a route $\mathbf{r}$ when departure from its first vertex, the origin depot $o$, occurs at $t$. It belongs to $\mathcal{F}$ but need not be non-decreasing, so it is the only function the route algebra builds that may fall outside $\mathcal{F}_{\text{ND}}$, the given arc TTF being the other potentially non-monotone member of the taxonomy. Its minimum value $\Delta_{\mathbf{r}}^{*} = \min_{t \in \dom(\mathbf{r})} \Delta_\mathbf{r}(t)$ gives the best route duration, the quantity minimized by the DM-TDVRPTW objective of section \ref{subsec:dm-def}. It allows solving the Optimal Starting Time Problem \cite{hashimotoIteratedLocalSearch2007} to determine the route \textit{dispatch time}, i.e. the earliest optimal departure time from vertex $o$: $t_\mathbf{r}^{*} = \min \operatorname{arg\,min}_{t \in \dom(\mathbf{r})} \Delta_\mathbf{r}(t)$ \cite{visserEfficientMoveEvaluations2020}.

\noindent\textbf{Complexity:} Computed in $\mathcal{O}(1)$ from the route RTF $\delta_{\mathbf{r}}$, with no need for duplicated memory. Computing $\Delta_{\mathbf{r}}^{*}$ requires finding the minimum of $\delta_{\mathbf{r}}(x) - x$ over the breakpoints of $\beta_{\delta_{\mathbf{r}}}$ in $\mathcal{O}(\phi_{\delta_{\mathbf{r}}}) = \mathcal{O}(m p)$ time, from which $t_{\mathbf{r}}^{*}$, the earliest breakpoint attaining it, is obtained directly in $\mathcal{O}(1)$. Lemma \ref{lemma:minimum} below proves that both readings are exact and independent of the representing chain.

\subsection{Chains, closure and the correctness of the event merge}\label{app:taxonomy-proofs}

This subsection proves the statements of section \ref{subsec:taxonomy}: that chains represent exactly the NDLCFs, that the duration minimum read off a chain is exact, the closure theorem \ref{theorem:closure}, the correctness theorem \ref{theorem:merge} for algorithm \ref{algo:compose_NDLCF}, the temporal subclasses of propositions \ref{prop:temporal} and \ref{prop:taxonomy}, and the recovery of the continuous literature. Throughout, $f(t^+)$ denotes the right limit of a function of $\mathcal{F}_{\text{ND}}$ at $t < u_f$, and $f(u_f^+) := f(u_f)$ at the right endpoint. For a chain, the smallest and largest ordinates carried at an abscissa $t$ are the values $\mathsf{s}(\beta)(t)$ and $\mathsf{s}(\beta)(t^+)$ of its selected function, except at the last abscissa where the largest ordinate may exceed $\mathsf{s}(\beta)(x_\phi)$ (trailing vertical). An element of $\mathcal{F}_{\text{ND}}$ may have a single-point domain, on which it takes a single value.

\begin{lemma}[Chains represent exactly the NDLCFs]\label{lemma:chains}
For every non-empty chain $\beta$, $\mathsf{s}(\beta) \in \mathcal{F}_{\text{ND}}$. Conversely every non-empty $h \in \mathcal{F}_{\text{ND}}$ is represented by its \emph{canonical chain}, which lists, for each cut abscissa $t_k$ of $h$, the point $(t_k, h(t_k))$ followed by $(t_k, h(t_k^+))$ when $h(t_k) < h(t_k^+)$ and $t_k < u_h$. Two chains represent the same function exactly when they agree after the removal of redundant points and of a trailing vertical.
\end{lemma}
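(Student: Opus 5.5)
The lemma has three parts, and I would prove them in order: selected functions lie in $\mathcal{F}_{\text{ND}}$, canonical chains represent, and representation is unique up to redundant points and a trailing vertical. Throughout I work directly from Definition \ref{def:selected}.

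\textbf{First part.} For a non-empty chain $\beta$, let $x_{(1)} < \dots < x_{(q)}$ be the distinct abscissae of $\beta$. These cut $[x_1, x_\phi]$ into finitely many open pieces. On each piece $(x_{(r)}, x_{(r+1)})$ the enclosing consecutive points are the \emph{last} point at $x_{(r)}$ and the \emph{first} point at $x_{(r+1)}$, because a vertical run consists of consecutive points. Hence $\mathsf{s}(\beta)$ is affine there. At $x_{(r+1)}$ the selected value is the smallest ordinate carried there, which is exactly the ordinate of that first point. So the interpolant tends to $\mathsf{s}(\beta)(x_{(r+1)})$ from the left, which gives left-continuity.

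Monotonicity follows from the chain being non-decreasing in $y$. On each piece the slope is non-negative. Across an abscissa, the selected value at $x_{(r)}$ is at most the ordinate of the last point there, which starts the next piece. For a one-point chain the domain is a single point, which the class admits.

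\textbf{Second part.} Let $h \in \mathcal{F}_{\text{ND}}$ with cuts $a_h = t_0 < \dots < t_K = u_h$. I would check that the canonical chain is a valid chain. Its coordinates are monotone because $h(t_k) \le h(t_k^+)$, and $h(t_k^+) \le h(t_{k+1})$ holds by monotonicity and left-continuity. No two consecutive points coincide, because the jump point is added only when the jump is strict.

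Next I would show that $\mathsf{s}$ of this chain equals $h$. At each $t_k$ the smallest ordinate is $h(t_k)$. On $(t_k, t_{k+1})$, the function $h$ is affine with right limit $h(t_k^+)$ at $t_k$ and, by left-continuity, limit $h(t_{k+1})$ at $t_{k+1}$. This is exactly the interpolation between $(t_k, h(t_k^+))$ and $(t_{k+1}, h(t_{k+1}))$.

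\textbf{Third part.} This is the main obstacle, because the phrase ``agree after removal'' has to be made into a normal form. I would define a reduction on chains:
\begin{itemize}
\item drop the trailing vertical, that is, keep only the first point of the last group;
\item repeatedly delete any interior point lying on the segment joining its two neighbours, where collinear, with vertical runs included as the degenerate case.
\end{itemize}
I would show that each deletion preserves $\mathsf{s}$. Deleting an interior point of a vertical run changes neither the bottom value at that abscissa nor the enclosing points used for interpolation. Deleting a collinear point leaves the interpolant unchanged, and that point carried the only ordinate at its abscissa, which equals the interpolated value.

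It then remains to show that a fully reduced chain representing $h$ is determined by $h$. Its distinct abscissae must be exactly the genuine cut points of $h$: points where the slope changes or where $h$ jumps, together with the endpoints. Any other abscissa would give a removable collinear point, and every genuine cut must appear because interpolation cannot create a kink or a jump. At each cut, the first part shows that the first point is $(t, h(t))$ and the last point is $(t, h(t^+))$. After reduction no middle points remain, and no trailing vertical remains. So the reduced chain coincides with the canonical chain with its own redundant points removed.

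The converse direction of the equivalence is immediate, since the reduction steps preserve $\mathsf{s}$. The delicate bookkeeping is confluence: the final reduced chain must not depend on the order of deletions. I would avoid proving confluence separately by using the characterization above, which says the reduced chain is forced by $h$ alone.
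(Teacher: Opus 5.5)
Your proposal is correct and follows the paper's proof: the first two parts are its argument. Between consecutive distinct abscissae the selected function is affine and non-decreasing, the left limit at a group is the group's smallest ordinate, and the canonical chain interpolates $h$ on each piece $(t_k, t_{k+1}]$ with bottom ordinate $h(t_k)$. For the third claim the paper only says that $\mathsf{s}$ ignores exactly those points, and your normal-form argument makes that sentence explicit, in particular its ``only if'' direction: a fully reduced chain has as abscissae exactly the genuine cut points of $h$, carrying $h(t)$ at the bottom and $h(t^+)$ at the top of each.
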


\begin{proof}
Between consecutive distinct abscissae $\mathsf{s}(\beta)$ interpolates linearly, hence is affine, and non-decreasing since ordinates are non-decreasing. At an abscissa $t = x_k > x_1$, the left limit of $\mathsf{s}(\beta)$ is the ordinate of the last point before the group at $t$, interpolated up to $t$, which is the smallest ordinate of the group, so $\mathsf{s}(\beta)$ is left-continuous, and it has finitely many pieces. Conversely, on each piece $(t_k, t_{k+1}]$ the affine extension of $h$ to $t_k$ from the right is $h(t_k^+)$, so the canonical chain interpolates exactly $h$ there, and at $t_k$ its smallest ordinate is $h(t_k)$. The last claim follows since $\mathsf{s}$ ignores exactly those points.
\end{proof}

\begin{lemma}[Chain-invariant duration minimum]\label{lemma:minimum}
Let $h \in \mathcal{F}_{\text{ND}}$ be non-empty and $\beta$ any chain representing it. Then $\Delta_h(t) := h(t) - t$ attains its minimum on $\dom(h)$, $\min_t \Delta_h(t) = \min_k (y_k - x_k)$, and the earliest abscissa of $\beta$ attaining the right-hand minimum is $\min \operatorname{arg\,min}_t \Delta_h(t)$.
\end{lemma}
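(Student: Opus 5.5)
The proof works piece by piece on the partition induced by the chain and then controls the jumps. Write $h = \mathsf{s}(\beta)$ and let $x_{(1)} < \dots < x_{(q)}$ be the distinct abscissae of $\beta$. Then $\dom(h) = [x_{(1)}, x_{(q)}]$, and on each open gap $(x_{(r)}, x_{(r+1)})$ the function $h$ is the linear interpolation between the last point of the group at $x_{(r)}$ and the first point of the group at $x_{(r+1)}$. Hence $\Delta_h$ is affine on the closed segment joining those two chain points, and it attains its infimum over that segment at one of the endpoints. The plan has two steps: first prove $\inf_t \Delta_h(t) \geq \min_k (y_k - x_k)$, then prove that the right-hand minimum is attained by $\Delta_h$ at some $t \in \dom(h)$. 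Together these give both the attainment and the equality.

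For the lower bound, take $t \in \dom(h)$. If $t$ is a chain abscissa, then $h(t)$ is the smallest ordinate carried at $t$, which is the ordinate of some chain point $(x_k, y_k)$ with $x_k = t$, so $\Delta_h(t) = y_k - x_k$. Otherwise $t$ lies strictly inside a gap, and $\Delta_h(t)$ is a convex combination of $y_k - x_k$ and $y_{k+1} - x_{k+1}$ for the two enclosing consecutive points. Either way $\Delta_h(t) \geq \min_k (y_k - x_k)$.

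For attainment, let $k^*$ be any index realizing the minimum $\mu := \min_k (y_k - x_k)$. Within the vertical run or group at $x_{k^*}$ the ordinates are non-decreasing, so the bottom point of that group has $y - x \leq y_{k^*} - x_{k^*} = \mu$, hence equal to $\mu$. By definition \ref{def:selected}, $h(x_{k^*})$ is exactly that bottom ordinate. Therefore $\Delta_h(x_{k^*}) = \mu$, which proves attainment and the identity $\min_t \Delta_h(t) = \min_k (y_k - x_k)$. The step I expect to be the main obstacle is this one, since points carried above the selected value at a jump abscissa (connectors, and a trailing vertical) could a priori give a spurious small $y_k - x_k$. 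The monotonicity of ordinates within a group is what rules this out: such points only give larger $y - x$.

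For the earliest minimizer, let $t^\circ = \min \operatorname{arg\,min}_t \Delta_h(t)$ and let $x^\circ$ be the earliest chain abscissa attaining $\mu$. We already have $\Delta_h(x^\circ) = \mu$, so $t^\circ \leq x^\circ$, and it remains to show $t^\circ$ is a chain abscissa. Suppose instead $t^\circ$ lies strictly inside a gap. Since $\Delta_h$ is affine there and takes its minimum value $\mu$ at an interior point, it is constant on the closed segment. In particular the left endpoint point $(x_k, y_k)$ of the segment, which is the top point of the group at $x_k$, has $y_k - x_k = \mu$. By the bottom-of-group argument above, this gives $\Delta_h(x_k) = \mu$ with $x_k < t^\circ$, contradicting minimality of $t^\circ$. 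Hence $t^\circ$ is a chain abscissa attaining $\mu$, so $t^\circ \geq x^\circ$, and the two coincide. The chain enters only through $h$ on the left-hand sides, so both readings are independent of the representing chain.
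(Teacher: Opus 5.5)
Your proof is correct, and its attainment step is more elementary than the paper's.

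\textbf{How the paper proceeds.} It first gets existence abstractly: $\Delta_h$ is left-continuous with $\Delta_h(t) \leq \Delta_h(t^+)$, hence lower semicontinuous on the compact $\dom(h)$. It then places the minimum at one of the intrinsic cut abscissae of $h$ (jumps, slope changes, endpoints). Next it argues that every representing chain must carry a point at each such abscissa, with bottom ordinate $h(t_k)$. Finally it checks that the remaining chain points cannot undercut: upper points of a connector, a trailing vertical, and redundant points on affine runs.

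\textbf{How you proceed.} You observe that, at every $t$, $\Delta_h(t)$ is either exactly a chain value $y_k - x_k$ (the bottom point at a chain abscissa) or a convex combination of two chain values (inside a gap). This gives the lower bound in one line and disposes of all those ``other'' chain points at once. The bottom-of-group remark then gives attainment without any topology. Chain invariance comes for free, since the left-hand side depends only on $h$.

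\textbf{What each route buys.} The paper's route yields a structural fact about $h$ itself: the minimum sits at one of its own cut points, whatever chain represents it. The cost is the extra claim that every chain contains those points. Your route is shorter and self-contained. It uses only the two clauses of definition \ref{def:selected}, not even monotonicity across groups.

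\textbf{Earliest minimizer.} This step is essentially the paper's: an affine function minimal at an interior point is constant, and the bottom of the left group lies no higher than its top.

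\textbf{One small point to fix.} You set $t^\circ = \min \operatorname{arg\,min}_t \Delta_h(t)$ before showing that this least element exists; the paper gets it from closedness of the argmin. Your own two observations settle it directly. First, $x^\circ$ lies in the argmin. Second, every element of the argmin is at least $x^\circ$: at a chain abscissa by definition of $x^\circ$, and strictly inside a gap because the gap's left endpoint is then a chain abscissa in the argmin. Hence $x^\circ$ is its least element, and you can rephrase the last paragraph as this direct argument rather than a contradiction about a presupposed $t^\circ$.
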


\begin{proof}
$\Delta_h$ is left-continuous with $\Delta_h(t) \leq \Delta_h(t^+)$, hence lower semicontinuous on the compact $\dom(h)$, so it attains its minimum and $\operatorname{arg\,min} \Delta_h$ is closed and non-empty, with a least element $t^\dagger$. On a piece $(t_k, t_{k+1}]$ the map $\Delta_h$ is affine, so its infimum over the piece is $\min(\Delta_h(t_k^+), \Delta_h(t_{k+1}))$, and $\Delta_h(t_k^+) \geq \Delta_h(t_k)$ is attained at $t_k$. Hence $\min \Delta_h$ is attained at some $t_k$, the $t_k$ being taken minimal (the abscissae where $h$ jumps or changes slope, and the two endpoints), and every chain representing $h$ carries a point at each of them with smallest ordinate $h(t_k)$, since $\mathsf{s}$ would otherwise interpolate straight across. So the minimum over the chain points $(x, h(x))$ equals $\min \Delta_h$. Every other chain point $(x, y)$ has $y > h(x)$ (a higher ordinate of a vertical run, or the trailing vertical) or lies on an affine run of $\Delta_h$ between two chain abscissae, so it cannot undercut the minimum. If $t^\dagger$ were not a chain abscissa it would lie strictly inside a piece $(x_k, x_{k+1})$ on which $\Delta_h$ is affine, and an affine function minimal at an interior point is constant on the piece, so $\Delta_h(x_k^+) = \min \Delta_h$ and $\Delta_h(x_k) \leq \Delta_h(x_k^+)$ would give an earlier minimizer $x_k$. So $t^\dagger$ is a chain abscissa, its smallest ordinate is $h(t^\dagger)$, and no earlier chain point attains the minimum.
\end{proof}

This is the one-pass reading of $\Delta_{\mathbf{r}}^{*}$ and $t_{\mathbf{r}}^{*}$ of section \ref{subsec:taxonomy}, the lemma being applied at $h = \delta_{\mathbf{r}}$ with $\Delta_{\mathbf{r}}$ standing for $\Delta_{\delta_{\mathbf{r}}}$ (the same subscript shorthand as $\dom(\cdot)$), and its result is therefore the same for every chain representing $\delta_{\mathbf{r}}$.

\begin{proof}[Proof of theorem \ref{theorem:closure}]
Since $g$ is non-decreasing, $g(t) \geq g(a_g) \geq a_f$ for all $t \in \dom(g)$ by (H), so $g(t) \in \dom(f)$ reduces to $g(t) \leq u_f$. The set $\mathcal{U} = \{t \in [a_g, u_g] : g(t) \leq u_f\}$ is an initial segment of $[a_g, u_g]$ by monotonicity, empty exactly when $g(a_g) > u_f$. When non-empty it is closed: if $t_n \uparrow t_\infty$ with $t_n \in \mathcal{U}$ then $g(t_\infty) = \lim g(t_n) \leq u_f$ by left continuity. A non-empty closed initial segment of a compact interval is $[a_g, \max \mathcal{U}]$, which gives the domain and $a_{f \circ g} = a_g$. The composition is non-decreasing as a composition of non-decreasing maps. It is left-continuous at $t \in (a_g, u_{f \circ g}]$: if $t_n \uparrow t$ with $t_n$ increasing then $g(t_n) \uparrow g(t)$, and either $g(t_n) = g(t)$ for large $n$, or $g(t_n) < g(t)$ so that $g(t) > a_f$ lies where $f$ is left-continuous, and $f(g(t_n)) \to f(g(t))$ in both cases. It is piecewise linear with finitely many pieces: for each piece boundary $v$ of $f$, the level set $g^{-1}(\{v\}) \cap \dom(f \circ g)$ is empty, a point, or an interval closed on the right (left continuity closes it on the right only, and it is left-open exactly when $g$ jumps to the value $v$, i.e. $g(t_0) < v = g(t_0^+)$ at some $t_0$). Cut $[a_g, u_{f \circ g}]$ at its endpoints, at the piece boundaries of $g$ and at the infimum and supremum of each non-empty such level set, a finite set of cut points listed once each. On each open cut piece $g$ is affine, and either constant, in which case $f \circ g$ is constant, or strictly increasing, in which case its image is an open interval containing no piece boundary of $f$ (a boundary $v$ inside the image would put the infimum or the supremum of $g^{-1}(\{v\})$ inside the cut piece), so $f$ is affine on that image and $f \circ g$ is affine.
\end{proof}

\begin{proof}[Proof of theorem \ref{theorem:merge}]
The proof reads the listing locals of algorithm \ref{algo:compose_NDLCF}, $\mathit{ys}$, $\mathit{ts}$, $\mathit{lo}$ and $\mathit{hi}$, quoted as named there. Write $h = f \circ g$, and let $g(t)$, $g(t^+)$, $f(v)$, $f(v^+)$ denote the selected functions of the input chains. Under (H), $\mathit{lo} = \max(a_f, y^g_1) = y^g_1 = g(a_g)$, and $\mathit{hi} = \min(u_f, y^g_{\phi_g})$ with $y^g_{\phi_g} \geq g(u_g)$. So $\mathit{lo} > \mathit{hi}$ if and only if $g(a_g) > u_f$, which by theorem \ref{theorem:closure} is the empty case. Assume now $g(a_g) \leq u_f$.

\emph{Events.} Let $\mathcal{E}$ be the set of abscissae of $\beta_f$ and ordinates of $\beta_g$ lying in $[\mathit{lo}, \mathit{hi}]$. The loop processes exactly the values of $\mathcal{E}$ in increasing order, once each: at each iteration $\mathit{ev}$ is the smallest unprocessed coordinate in range from either input, and all points of $\beta_f$ with abscissa $\mathit{ev}$ and all points of $\beta_g$ with ordinate $\mathit{ev}$ are consumed, being consecutive in their chains. The first event is $\mathit{lo} = y^g_1$, carried by the point $(a_g, g(a_g))$. For $\mathit{ev} \in \mathcal{E}$: if $\mathit{ev}$ is an ordinate of $\beta_g$, $\mathit{ts}(\mathit{ev})$ lists the abscissae of the points carrying it, in order. Otherwise $y^g_{j-1} < \mathit{ev} < y^g_j$ and $\mathit{ts}(\mathit{ev}) = (t)$ with $t$ interpolated, which is the unique solution of $g(t) = \mathit{ev}$ when $x^g_{j-1} < x^g_j$ ($g$ continuous and strictly increasing there), and the jump abscissa $x^g_j$ when $x^g_{j-1} = x^g_j$ ($\mathit{ev}$ inside a connector). Symmetrically, if $\mathit{ev}$ is an abscissa of $\beta_f$, $\mathit{ys}(\mathit{ev})$ lists the ordinates at $\mathit{ev}$, from $f(\mathit{ev})$ up to $f(\mathit{ev}^+)$ when $\mathit{ev} < x^f_{\phi_f}$, and up to the top of a possible trailing vertical of $\beta_f$ when $\mathit{ev} = x^f_{\phi_f}$. Otherwise $\mathit{ys}(\mathit{ev}) = (f(\mathit{ev}))$ by interpolation and $f$ is continuous at $\mathit{ev}$. Interpolation never indexes outside a chain: in the $\mathit{ys}$ branch $x^f_1 \leq \mathit{lo} \leq \mathit{ev} \leq \mathit{hi} \leq x^f_{\phi_f}$ with $\mathit{ev}$ not an abscissa, so index 1 was skipped by the initialization or consumed at the first event and some abscissa exceeds $\mathit{ev}$, and the $\mathit{ts}$ branch is symmetric. Two structural facts follow. (S1) Every emitted abscissa is a chain abscissa of $\beta_g$ or a point where $g$ is continuous and strictly increasing with $g(t) = \mathit{ev}$ for the event $\mathit{ev}$ that produced it, so a point strictly interior to a plateau of $g$ is never emitted unless it is a redundant chain abscissa. (S2) If $t \in \mathit{ts}(\mathit{ev})$ then $\mathit{ev} \geq g(t)$.

\emph{Output abscissae.} Let $\mathcal{X}$ be the set of emitted abscissae. By (S2) every $t \in \mathcal{X}$ satisfies $g(t) \leq \mathit{ev} \leq \mathit{hi} \leq u_f$, so $\mathcal{X} \subseteq [a_g, u_h]$, and $a_g \in \mathit{ts}(\mathit{lo}) \subseteq \mathcal{X}$. Also $u_h \in \mathcal{X}$: let $v = g(u_h) \leq u_f$. If $u_h = u_g$ then $v$ is the smallest ordinate of $\beta_g$ at $u_g$, so $v \leq \mathit{hi}$, $v \in \mathcal{E}$ and $u_h \in \mathit{ts}(v)$. If $u_h < u_g$, then $g(s) > u_f$ for every $s > u_h$, hence $y^g_{\phi_g} > u_f$ and $\mathit{hi} = u_f$. Either $g$ jumps at $u_h$, which is then a chain abscissa with smallest ordinate $v \leq \mathit{hi}$ and $u_h \in \mathit{ts}(v)$, or $g$ is continuous at $u_h$ with $g(u_h) = u_f$ exactly, and then $u_f \in \mathcal{E}$ is an abscissa of $\beta_f$ and $\mathit{ts}(u_f)$ contains $u_h$.

\emph{The output is a chain.} Within one event the emitted points are non-decreasing in both coordinates by construction. Across events $\mathit{ev} < \mathit{ev}'$, every $t \in \mathit{ts}(\mathit{ev})$ and $t' \in \mathit{ts}(\mathit{ev}')$ satisfy $t \leq t'$ because $(t, \mathit{ev})$ and $(t', \mathit{ev}')$ both lie on the monotone completed graph of $\beta_g$, and every $y \in \mathit{ys}(\mathit{ev})$ and $y' \in \mathit{ys}(\mathit{ev}')$ satisfy $y \leq y'$ on that of $\beta_f$. The exact-duplicate drop of \textit{emit} is therefore its only effect in exact arithmetic.

\emph{The output represents $h$.} It suffices to show: (a) at every $t \in \mathcal{X}$ the smallest emitted ordinate is $h(t)$; (b) at every $t \in \mathcal{X}$ with $t < u_h$ the largest emitted ordinate is $h(t^+)$; (c) between two consecutive elements $t < t'$ of $\mathcal{X}$, $h$ is affine on $(t, t')$ with end values $h(t^+)$ and $h(t')$. Then $\mathsf{s}(\beta_h)$ agrees with $h$ at every abscissa of $\mathcal{X}$ by (a), and on each open gap by (b), (c) and linear interpolation, ordinates emitted at $u_h$ above $h(u_h)$ forming a trailing vertical that $\mathsf{s}$ ignores.

(a) Fix $t \in \mathcal{X}$ and $v = g(t)$. Then $v \in \mathcal{E}$: if $t$ is a chain abscissa of $\beta_g$ (which includes every abscissa returned by the interpolation inside a connector), $v$ is its smallest ordinate. Otherwise by (S1) $t$ was interpolated on a strictly increasing piece and $v$ is the event that produced it. The event $v$ emits $(t, \mathit{ys}_1(v)) = (t, f(v)) = (t, h(t))$ since $t \in \mathit{ts}(v)$. Every other ordinate emitted at $t$ comes from an event $\mathit{ev} \geq v$ by (S2), with ordinate at least $f(v)$.

(b) Fix $t \in \mathcal{X}$ with $t < u_h$ and $w = g(t^+)$. Then $w \leq g(u_h) \leq u_f$ and $w \leq y^g_{\phi_g}$, so $w \leq \mathit{hi}$. If $g \equiv w$ on some $(t, t']$ (plateau), then by (S1) $t$ is a chain abscissa of $\beta_g$ whose largest ordinate is $w$, so $t \in \mathit{ts}(w)$ and $\mathit{ts}(w)$ ends at the plateau's end $t' > t$ or later: the event $w$ emits $(t, f(w))$ and places the jump ordinates of $f$ at $w$, if any, at the end of $\mathit{ts}(w)$, not at $t$, and no event $\mathit{ev} > w$ emits at $t$ since the completed graph of $\beta_g$ continues horizontally from $(t, w)$. Hence the largest ordinate at $t$ is $f(w) = h(t^+)$, as $h \equiv f(w)$ on $(t, t']$. Otherwise $g$ is strictly increasing right after $t$, so no chain point $(s, w)$ with $s > t$ exists (it would give $g(s) \leq w$), $t$ is the last element of $\mathit{ts}(w)$, and the event $w$ emits all of $\mathit{ys}(w)$ at $t$, whose largest element is $f(w^+)$ if $w$ is an abscissa of $\beta_f$ with $w < x^f_{\phi_f}$ and $f(w)$ if $f$ is continuous at $w$. The sub-case $w = x^f_{\phi_f} = u_f$ would force $g(s) > u_f$ for $s > t$, contradicting $t < u_h$. No event $\mathit{ev} > w$ emits at $t$, as $g$ leaves $w$ immediately and $w$ is the top of any connector at $t$. And $h(t^+) = \lim_{s \downarrow t} f(g(s))$ with $g(s) \downarrow w$ strictly from above, which is $f(w^+)$, equal to $f(w)$ when $f$ is continuous at $w$.

(c) Let $t < t'$ be consecutive in $\mathcal{X}$. No chain abscissa $x$ of $\beta_g$ lies in $(t, t')$, since it would satisfy $g(x) \leq g(t') \leq u_f$ and $g(x) \leq y^g_{\phi_g}$, hence $g(x) \in \mathcal{E}$ and $x \in \mathit{ts}(g(x)) \subseteq \mathcal{X}$. So $g$ is affine and continuous on $(t, t')$, with values in the open interval $(g(t^+), g(t'))$, or constant. No abscissa $v$ of $\beta_f$ lies in $(g(t^+), g(t'))$: such a $v$ would be an event, and it cannot be a chain ordinate of $\beta_g$ (a chain point $(s, v)$ with $s < x$, where $x$ is the unique solution of $g = v$ in $(t, t')$, would force $g \equiv v$ up to $x$, one with $s > x$ would force $g(s) = g(x)$, both contradicting strict increase, and a trailing vertical carries only ordinates above $g(u_g) \geq g(t')$), so $\mathit{ts}(v)$ is the interpolated singleton $(x)$ and $x$ would be in $\mathcal{X}$. So $f$ is affine on $(g(t^+), g(t'))$ and $h$ is affine on $(t, t')$, with end values $\lim_{s \downarrow t} f(g(s)) = h(t^+)$ and $\lim_{s \uparrow t'} f(g(s)) = f(g(t')) = h(t')$ by left continuity of $f$ and $g$. If $g$ is constant on $(t, t')$, $h$ is constant with the same end values.

\emph{Bound and cost.} An event with $c_f$ chain ordinates of $f$ and $c_g$ chain abscissae of $g$ emits $c_f + c_g - 1$ points and consumes $c_f + c_g$ input points. If one side was interpolated, the event emits $\max(c_f, c_g)$ points and consumes as many. Both sides interpolated is impossible since $\mathit{ev}$ is a coordinate of some input point. So $\phi_h \leq \phi_f + \phi_g$, and each event costs time linear in the points it consumes plus $\mathcal{O}(1)$, for $\mathcal{O}(\phi_f + \phi_g)$ in total, the initial skips included.
\end{proof}

Theorem \ref{theorem:merge} is stated for any representing chains, so any two chains representing $f$ and any two representing $g$ yield outputs representing the same $f \circ g$. Bitwise equality of the outputs is not claimed and does not hold in general, redundant points and trailing verticals of the inputs propagating to the output. The bit-identity of the checker and of \kayros{} is the property of one fixed pipeline running one listing on the same chains, not of the operation across representations.

\begin{proof}[Proof of proposition \ref{prop:temporal}]
For $f \in \mathcal{F}^{\text{out}}_a$ and $g \in \mathcal{F}^{\text{in}}_a$, $g(a_g) \geq a_g \geq a \geq a_f$ is (H). The values of $f \circ g$ satisfy $f(g(t)) \geq g(t) \geq t$, and $a_{f \circ g} = a_g \geq a$ by theorem \ref{theorem:closure}, so $f \circ g \in \mathcal{F}^{\text{in}}_a$, with $a_{f \circ g} = a$ when $a_g = a$. For the bracketings: if some internal node evaluates to $\bot$, then by associativity of partial-function composition every bracketing of the full product is $\bot$, and algorithm \ref{algo:compose_NDLCF} returns $\bot$ at that node and propagates it through its first line. Otherwise, by induction on the bracketing tree, the subtree containing $f_1$ evaluates to a member of $\mathcal{F}^{\text{in}}_a$ and every other subtree to a member of $\mathcal{F}^{\text{anc}}_a \subseteq \mathcal{F}^{\text{out}}_a$, because composition order is fixed and the subtree containing $f_1$ is always the inner operand of its parent, so the first part applies at every node and theorem \ref{theorem:merge} yields a representation of the pointwise composition there. Equality of the bracketings is associativity of partial-function composition.
\end{proof}

\begin{proof}[Proof of proposition \ref{prop:taxonomy}]
$a = a_{\mathrm{arc}} \geq 0$ since $\mathcal{T}_{\mathrm{arc}} \subseteq \mathcal{T} \subset \mathbb{R}_{\geq 0}$. A customer function, $i \in \mathcal{C}$, has domain $[0, l_i]$ with $0 \leq a$ and $\theta_i(t) = \max(t, e_i) + s_i \geq t$, so $\theta_i \in \mathcal{F}^{\text{out}}_a$, and so does $\theta_d = \mathrm{Id}_{[0, l_d]}$. An arc ATF has domain $\mathcal{T}_{\mathrm{arc}}$ and $\alpha_{ij}(t) \geq t$, so $\alpha_{ij} \in \mathcal{F}^{\text{anc}}_a$. The depot departure is the identity on $[e_o, l_o] \cap \mathcal{T}_{\mathrm{arc}}$, whose lower endpoint is at least $a$, so it is in $\mathcal{F}^{\text{in}}_a$. The rest is proposition \ref{prop:temporal}. The sequential fold alternates $f \circ g$ with $f \in \mathcal{F}^{\text{out}}_a$ (an ATF, a vertex function or the clamp) and $g \in \mathcal{F}^{\text{in}}_a$ (the departure identity, then each accumulator), so its first part applies at every step. An arc RTF $\theta_j \circ \alpha_{ij}$ and a leaf $\mathit{lf}_k$, $k \geq 2$, compose an outer map onto an anchored map and are anchored, the return leaf $\mathrm{Id}_{[0, l_d]} \circ \alpha_{v_{m-1} d}$ included, while $\mathit{lf}_1$ has the departure identity as innermost operand and is in $\mathcal{F}^{\text{in}}_a$, so the second part applies to every bracketing of the leaf list. Note that a bare vertex function is an outer map only ($a_{\theta_i} = 0 < a$ when $a_{\mathrm{arc}} > 0$), so re-bracketing the finer list of vertex and arc functions is not covered and not performed. The canonical data contract of section \ref{subsec:kayros-arch} enforces exactly the properties used, a chain non-decreasing in both coordinates spanning $\mathcal{T}_{\mathrm{arc}}$ with $y_k \geq x_k$ at every point, plus $a_{\mathrm{arc}} \geq 0$, which holds on every canonical instance of section \ref{subsec:benchmarks} (five families at 0, Blauth2024 at $54 \cdot 10^{6}$ ms).
\end{proof}

\begin{proposition}[Recovery of the continuous literature]\label{prop:continuous}
Recall that $\mathcal{F}_{\text{ND}}^{\text{c}}$ is the continuous subclass of $\mathcal{F}_{\text{ND}}$, i.e. the NDCPWLFs. (i) If $f, g \in \mathcal{F}_{\text{ND}}^{\text{c}}$ satisfy (H) then $f \circ g \in \mathcal{F}_{\text{ND}}^{\text{c}}$, and if $\beta_f$ and $\beta_g$ carry no vertical run then at every event $|\mathit{ys}(\mathit{ev})| = 1$, the second emit loop of algorithm \ref{algo:compose_NDLCF} never runs, the output carries no vertical run, and the listing degenerates to the breakpoint sweep of \cite{visserEfficientMoveEvaluations2020, blauthVehicleRoutingTimedependent2024}. (ii) The class of \citeauthor{visserEfficientMoveEvaluations2020} \cite{visserEfficientMoveEvaluations2020}, piecewise linear continuous non-decreasing functions on $[0, T_f]$ (their letters kept as published, $T_f$ being their per-function right endpoint, written $u_f$ here), is $\mathcal{F}_{\text{ND}}^{\text{c}} \cap \{a_f = 0\}$, on which (H) reads $g(0) \geq 0$ and holds for every function with non-negative values, hence for every time-valued function. Their theorem 3 is theorems \ref{theorem:closure} and \ref{theorem:merge} on that class, up to their breakpoint convention. (iii) The restrictions to $[t_{\min}, t_{\max}]$ of the ATFs of \citeauthor{blauthVehicleRoutingTimedependent2024} \cite{blauthVehicleRoutingTimedependent2024}, continuous non-decreasing functions on $(-\infty, t_{\max}]$ with $a(t) \geq t$ that are constant below some $t_{\min}$ ($a$ is their ATF letter, kept as published and not to be read as the anchor $a$ of proposition \ref{prop:temporal}), belong to $\mathcal{F}_{\text{ND}}^{\text{c}} \cap \mathcal{F}^{\text{anc}}_{t_{\min}}$ for a common $t_{\min}$, every member of which extends to such an ATF by the constant $f(t_{\min})$. Their proposition 2 is theorems \ref{theorem:closure} and \ref{theorem:merge} on that class up to their breakpoint convention, their lower-unbounded initially-constant domain being a device to make every ATF total on the left, which (H) with a common lower endpoint replaces.
\end{proposition}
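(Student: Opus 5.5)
The plan is to treat the three parts separately, with theorems \ref{theorem:closure} and \ref{theorem:merge} doing all the analytic work. Parts (ii) and (iii) then reduce to checking class memberships and (H).

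For (i), the closure claim comes first. By theorem \ref{theorem:closure}, $f \circ g \in \mathcal{F}_{\text{ND}}$ with a closed domain $[a_g, u_{f\circ g}]$. It remains to show the composition has no jump. I would argue pointwise: at $t < u_{f \circ g}$, continuity of $g$ gives $g(t^+) = g(t)$. If $g(t) < u_f$, continuity of $f$ at $g(t)$ gives $(f\circ g)(t^+) = f(g(t))$. If $g(t) = u_f$ with $t < u_{f\circ g}$, then $g$ is constant equal to $u_f$ on a right neighbourhood of $t$, so $f \circ g$ is constant there. Together with left-continuity this gives continuity.

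Next, the algorithmic claims. If $\beta_f$ has no vertical run, every abscissa of $\beta_f$ carries exactly one ordinate. The $\mathit{ys}$ collection therefore returns a singleton when $\mathit{ev}$ is an abscissa, and the interpolation branch returns a singleton otherwise. Hence $|\mathit{ys}(\mathit{ev})| = 1$, and the second emit loop iterates over an empty tuple. For the output, I would show the emitted abscissae are strictly increasing, up to exact duplicates that \textit{emit} drops. Within an event only the bottom edge is emitted, so all points share the single ordinate $\mathit{ys}_1$ and have distinct abscissae. Across events $\mathit{ev} < \mathit{ev}'$, the point $\mathit{ts}_{\mathrm{last}}(\mathit{ev})$ is strictly below $\mathit{ts}_1(\mathit{ev}')$. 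The reason is that $\beta_g$ has no vertical run, so two distinct values of $g$ are never taken at one abscissa, and interpolation never lands on a connector. A point shared between two events would force a vertical run of $\beta_g$ or of $\beta_f$.

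Identifying the listing with the sweep of \cite{visserEfficientMoveEvaluations2020, blauthVehicleRoutingTimedependent2024} is descriptive. Both merge the abscissae of $f$ with the ordinates of $g$ in increasing order and interpolate the missing side; the only extra rule, the double-tie rule, is exactly the loop shown never to run.

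For (ii), first note that $\mathcal{F}_{\text{ND}}^{\text{c}} \cap \{a_f = 0\}$ is exactly their class by Definition \ref{def:ndlcf}. On it, (H) reads $0 \leq g(0)$, which holds for any non-negative-valued $g$. Their theorem 3 then follows from (i) together with theorems \ref{theorem:closure} and \ref{theorem:merge}. The one point needing care is breakpoint counting: our bound counts chain points, theirs counts distinct pieces, and ours is coarser, which yields the qualifier "up to their breakpoint convention".

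For (iii), restrict a Blauth ATF $a$ to $[t_{\min}, t_{\max}]$. The restriction is continuous, non-decreasing and piecewise linear, satisfies $a(t) \geq t$, and has lower endpoint $t_{\min}$, so it lies in $\mathcal{F}^{\text{anc}}_{t_{\min}}$. Conversely, a member $f$ of $\mathcal{F}_{\text{ND}}^{\text{c}} \cap \mathcal{F}^{\text{anc}}_{t_{\min}}$ extends to a Blauth ATF by setting $f(t) := f(t_{\min})$ for $t < t_{\min}$. This extension is continuous and non-decreasing, and it satisfies $f(t) \geq f(t_{\min}) \geq t_{\min} > t$ below $t_{\min}$. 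Proposition \ref{prop:temporal} then gives (H) for every composition, so their proposition 2 follows from theorems \ref{theorem:closure} and \ref{theorem:merge} as in (ii).

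The main obstacle I expect is the no-vertical-run claim for the output in (i). It needs a careful argument that \textit{emit} never produces two points with equal abscissa and different ordinates. I would try to derive it from the monotone completed-graph argument in the proof of theorem \ref{theorem:merge}, specialized to the case of no connectors.
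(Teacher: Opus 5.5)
Your proposal is correct and follows the paper's proof essentially step for step. The paper also argues (i) from continuity of a composition of continuous maps, and your right-limit case split is a more explicit version of its one-line remark. It gets singleton $\mathit{ys}$ because $\beta_f$ has no vertical run, and shows that an output vertical run would need a common abscissa in $\mathit{ts}(\mathit{ev}) \cap \mathit{ts}(\mathit{ev}')$ with $\mathit{ev} < \mathit{ev}'$, i.e.\ a connector of $\beta_g$. It then reads (ii) and (iii) off the definitions, as you do.

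The one nuance the paper records that you miss is in (ii). The cited theorem also admits $g(0) < 0$, where (H) fails and the pointwise domain starts at $\min\{t : g(t) \geq 0\} > 0$. So the identification is narrower than the cited statement in its sign condition, not only in its breakpoint convention. This costs nothing for time-valued functions.
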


\begin{proof}
(i) A composition of continuous non-decreasing maps is continuous. If neither input chain has a vertical run, every abscissa of $\beta_f$ carries one point, so $|\mathit{ys}(\mathit{ev})| = 1$ at every event. An output vertical run would need either $|\mathit{ys}(\mathit{ev})| \geq 2$ or two emitted points at the same abscissa from different events, which requires $t \in \mathit{ts}(\mathit{ev}) \cap \mathit{ts}(\mathit{ev}')$ with $\mathit{ev} < \mathit{ev}'$, hence a connector of $\beta_g$ at $t$. (ii) and (iii) are immediate from the definitions, the sign condition in (ii) being the one point where (H) is narrower than the cited statement, which admits $g(0) < 0$: there the pointwise domain starts at $\min\{t : g(t) \geq 0\} > 0$, a case that never arises for time values.
\end{proof}

\paragraph{What fails without (H).} The smallest example uses integer breakpoints. Let $\beta_f = ((8, 9))$ and $\beta_g = ((0, 0), (2, 6), (2, 9))$: $g$ is the continuous function $3t$ on $[0, 2]$, represented with a trailing vertical through values it never takes, and $a_f = 8 > g(0) = 0$ violates (H). Pointwise, $g(t) \leq 6 < 8$ for all $t$, so $f \circ g = \bot$. Algorithm \ref{algo:compose_NDLCF} computes $\mathit{lo} = \mathit{hi} = 8$, one event $\mathit{ev} = 8$ with $\mathit{ys} = (9)$ and $\mathit{ts}$ interpolated inside the connector to $(2)$, and returns $((2, 9))$, a non-empty chain for an empty composition. On the canonical chain $((0, 0), (2, 6))$ of the same $g$, it correctly returns $\bot$. With $\beta_{g''} = ((0, 2), (2, 2))$ the two bracketings of $f \circ g \circ g''$ then even disagree, $(f \circ g) \circ g''$ giving $((0, 9), (2, 9))$ and $f \circ (g \circ g'')$ giving $((2, 9))$, both wrong since the true product is $\bot$. In exact rational arithmetic on random chains violating (H), the merge remains exact whenever the pointwise domain is a closed interval. Its only failure modes are a non-empty answer for an empty composition and a closed answer for a composition whose domain is left-open at a jump of $g$, an object no chain can represent. (H) is thus the hypothesis under which the class is closed and the representation faithful, and propositions \ref{prop:temporal} and \ref{prop:taxonomy} show that route evaluation never leaves it. These statements were checked in exact rational arithmetic on the shipped listing, over the example above, the double tie, boundary ties, redundant points, trailing verticals, random pairs with and without (H), and the association of random temporal chains.

\subsection{Exactness of normalization and the three defect families}\label{app:taxonomy-exactness}

\noindent\textbf{Exactness of normalization:} Normalization, the removal of redundant points, is not a step of algorithm \ref{algo:compose_NDLCF} but a separate operation specified with \emph{exact} slope equality. A subtlety of floating-point arithmetic dictates how much of it may enter exact cost semantics. Removing interior breakpoints of exactly-horizontal or exactly-vertical runs is provably bit-neutral for evaluated values and the minimum duration: flat interpolation is exact, vertical interiors are unobservable under the selected-function rule of definition \ref{def:selected}, and the minimum of $y - x$ over breakpoints is never attained strictly inside either run. On a horizontal run, the earliest minimizing departure can nevertheless move to its right endpoint when binary64 subtraction rounds distinct $y-x$ values to the same value within one unit in the last place. Removing a breakpoint that lies on a \emph{sloped} piece, however, is not evaluation-neutral even under an exact collinearity predicate, because interpolation is not transitive in floating point: downstream compositions then interpolate over relocated endpoints. On ATFs with genuine vertical steps \cite{rifkiImpactSpatiotemporalGranularity2020}, we measured the resulting pointwise deviations reaching full step heights. For this reason, \kayros{} keeps the \emph{un-normalized} composition as its canonical cost semantics (the checker and the solver remain bit-identical by construction). The restricted flat/vertical deduplication remains available as a provably neutral optimization (10--28\% fewer stored breakpoints on TW-clamped and step families of the canonical instances, at no semantic cost). The benchmark implementation \href{https://github.com/0nyr/pwlf_compare}{on GitHub} predates this analysis and inherits epsilon-based comparators from the \texttt{lera2020} baseline it extends. The exact, epsilon-free form of the sweep is the event merge of algorithm \ref{algo:compose_NDLCF}, shipped in the canonical checker and in \kayros{}.

\noindent\textbf{Implications:} Our validation campaign around this operation uncovered three independent families of defects, each with a lesson for solver and benchmark design. \emph{(i) Reference implementations.} The performance assessment on randomly generated arc RTFs (section \ref{subsec:ndcpwlf-benchmark}) revealed silent index-out-of-bounds errors in the original Open Source piecewise-linear composition of \cite{lera-romeroLinearEdgeCosts2020}, which we corrected before benchmarking against our \texttt{visser+nor}. \emph{(ii) Exact boundary ties.} Optimal Duration-minimization solutions systematically \emph{ride} time-window deadlines: waiting at an upstream vertex clamps the arrival time to the (integral) TW opening, so composed route RTFs carry pieces that tie a downstream deadline in exact floating-point equality. Such ties have probability zero on random data, so random-function testing is powerless against this class, yet they arise systematically on real instances. Re-traced on the canonical, byte-exact instance data at their published departure times in plain IEEE-754 double arithmetic with no tolerance, 72 of the 146 published BKS of \cite{lera-romeroLinearEdgeCosts2020}, through 188 of the 898 published routes, carry at least one such tie bit-for-bit. An early two-pointer implementation of the sweep, predating the event merge of algorithm \ref{algo:compose_NDLCF} and lacking the tail-flush steps its formulation requires, silently collapsed the composed domain on these ties on 3 of the 898 published BKS routes. It remained bit-identical to the reference on all others. \emph{(iii) Benchmark data provenance.} Most consequential of all, re-evaluating the same 146 BKS on distances re-derived at full precision from the Solomon coordinates, instead of the floor-truncated canonical ones (section \ref{subsec:benchmarks}), inflates every arc by $+0.40$ time units on average and flips 11 of the BKS to infeasible. All 11 are among the 72 tie-carrying solutions above, provably optimal ones included, precisely because optimal solutions ride deadlines and the ties break against them. The solutions are not at fault: the evaluation has silently switched instance sets, exactly as classic-VRPTW results shift between the truncated DIMACS 2021 and float SINTEF conventions \cite{DIMACS2021, Sintef2008}. On the curated byte-exact \textit{Dabia2013} data shipped with \kayros{}, our exact, epsilon-free reference checker reproduces all 146 published BKS and even finds a strictly better solution on RC105\_100. Its published departure time on one route stops $10^{-5}$ short of the duration-minimizing breakpoint, a gap below the comparison tolerance $\varepsilon = 10^{-5}$ of the original solver and therefore invisible to its epsilon-based comparisons. Appendix \ref{app:bks} summarizes the two-perspective re-pricing, with per-instance values in the thesis and the public solution store \cite{rascoussierMAMUTrouting2026}. These findings fix two non-negotiable design rules of \kayros{} and its companion benchmark suite: solution costs are always the output of a canonical, epsilon-free, Open Source checker (\emph{the checker defines the objective}), and benchmark instances are distributed as byte-exact canonical data with checksums, never as re-derivation recipes. It is not the first time in the TD routing domain that such issues surface years after major publications \cite{foschiniComplexityTimeDependentShortest2011, dabiaErratumBranchPrice2024}.

\noindent\textbf{Two remarks on the shipped merge:} both concern the double tie of section \ref{subsec:composition}, where one plateau of the inner function meets one jump of the outer one at the same value. Advancing pairwise on such a tie, as a naive two-pointer merge does, fuses the bottom of the jump with the left end of the plateau and interpolates diagonally across the corner. The shipped implementation additionally clamps each new coordinate up to the previous one, an operation that never fires in exact arithmetic and exists only so that floating-point rounding cannot break the chain invariant. Thus a coordinate copied from an input chain is exact in binary64 unless an earlier rounded interpolation activates this monotone clamp.

\section{Solver Internals}\label{app:solver}

This appendix states in full the parts of section \ref{sec:solver} that the running text summarizes: the tree-ranked move evaluation and the measurement that forces the ranking-accounting split, the fleet-descent ladder and the penalty-tolerant squeeze, the five additions to the exact component and the four-run certificate gate, and the full readings of the fleet-aware machinery promised by section \ref{subsec:kayros-fleet}.

\subsection{Move evaluation and the bracketing measurement}\label{app:solver-splice}

Every candidate move is an instance of one splice primitive: replace a window of the receiving route by a window donated from another route, the donated window empty for a deletion and the removed window empty for an insertion. Relocations, swaps and 2-opt* tails all take this shape, and algorithm \ref{algo:splice} prices it with a constant number of tree queries plus a constant number of seam leaves. Intra-route relocation, which recomposes two seams inside one route, has its own variant of the same scheme.

\begin{algorithm}
\setstretch{1}\small
\caption{$\textit{rankSplice}$: tree-ranked evaluation of a candidate move}\label{algo:splice}
\begin{algorithmic}[1]
\Require Receiving route $\mathbf{r}_1$ and donor route $\mathbf{r}_2$ with their LCA-BSTs; a removed window of $\mathbf{r}_1$ and a possibly empty donated window of $\mathbf{r}_2$
\Ensure A ranking value for the spliced route, or $+\infty$ if it is infeasible
\State $h := \textit{query}(\mathbf{r}_1, \text{leaves strictly before the removed window})$ \Comment{one LCA query, $\leq 1$ composition}
\If{the donated window is non-empty}
    \State $h := \textit{query}(\mathbf{r}_2, \text{interior leaves of the donated window}) \circ \textit{bridge}(i, j) \circ h$ \Comment{$i$ last kept vertex, $j$ first donated vertex; $\textit{bridge}(i,j) = \theta_j \circ \alpha_{ij}$ belongs to neither stored route}
\EndIf
\If{$\mathbf{r}_1$ has a surviving suffix}
    \State $h := \textit{query}(\mathbf{r}_1, \text{leaves strictly after the removed window}) \circ \textit{bridge}(i', j') \circ h$ \Comment{$i'$ last placed, $j'$ first suffix vertex}
\Else
    \State $h := \mathrm{Id}_{[0, l_d]} \circ \alpha_{i' d} \circ h$ \Comment{closing leaf}
\EndIf
\State \textbf{if} $h = \bot$ \textbf{then} \Return $+\infty$ \textbf{end if} \Comment{empty domain: time-window infeasible}
\State \Return $\min_{t} \left( h(t) - t \right)$ \Comment{ranking only; never stored as a cost}
\end{algorithmic}
\end{algorithm}

Separating ranking from accounting is not a stylistic precaution: it is forced by measurement. Partial-function composition is associative in exact real arithmetic. Proposition \ref{prop:temporal} establishes closure of the temporal subclass under its anchor hypothesis. Each computed composition \emph{interpolates}, and floating-point interpolation is not associative. Different bracketings of one chain can therefore differ by units in the last place. On 4\,800 routes over 24 instances spanning every family, we compared the checker's interleaved left fold against a precomposed-leaf fold and against a balanced fold. Across the two comparisons, 73\% and 71\% of the route ready-time functions were bitwise identical and 91\% and 89\% of the durations exactly equal. Feasibility never disagreed on a single route, and almost all mismatches were dust of the order of $7 \cdot 10^{-12}$. The exception is instructive: on a stepwise instance \cite{rifkiImpactSpatiotemporalGranularity2020}, accumulated dust landed just above a step boundary instead of exactly on it. One fold then rode the top of the step and the other the bottom, and the two durations differed by a full step height. This is a property of floating-point composition, not a defect of either fold. \kayros{} therefore enforces the rule \emph{trees rank, the fold accounts}: an accepted move is committed only after both changed routes have been rebuilt and repriced by the sequential checker-identical fold, the acceptance test being a strict inequality on repriced totals with no tolerance. The number of tree-ranked candidates the fold then rejects is instrumented rather than assumed to be zero. Measured against naive recomposition over 50\,000 move evaluations per configuration on best-known solutions, the structures speed move evaluation up by a factor 2 at a mean route length of about 6 and by up to a factor 17 at about 50. The LCA-BST leads the composition tree everywhere by 15 to 35\%, and the set of feasible moves is identical across the three methods on every configuration. We found no size at which the sequential fold overtakes the structures.

\subsection{The anytime stack: parameters and screens}\label{app:solver-anytime}

This subsection holds the constants, the proximity formula and the screens of the anytime search of section \ref{subsec:kayros-anytime}, which the running text states only by their effect.

\paragraph{The proximity relation} Since \textit{Duration} carries no separate distance term, the proximity of a pair is read off the ATFs. The departure interval of $i$ is first intersected with the arc's own departure domain, giving $\mathit{lo}(i,j) = \max \left( e_i + s_i, \min \dom(\alpha_{ij}) \right)$ and $\mathit{hi}(i,j) = \min \left( l_i + s_i, \max \dom(\alpha_{ij}) \right)$. The pair is rejected outright when $\mathit{lo} > \mathit{hi}$. With $\mathit{mindur}(i,j) = \min \{ \tau_{ij}(t) : t \in [\mathit{lo}, \mathit{hi}] \}$, computed exactly over the breakpoints of $\alpha_{ij}$ lying strictly inside $(\mathit{lo}, \mathit{hi})$ plus the two endpoints, and $\mathit{minwait}(i,j) = \max ( 0, e_j - \alpha_{ij}(\mathit{hi}) )$, the waiting at $j$ that even the latest feasible departure cannot avoid, we set $\mathit{prox}(i,j) = \mathit{mindur}(i,j) + 0.2 \cdot \mathit{minwait}(i,j)$ (the screen quantities are quoted under their implementation names). Pairs screened out by $\alpha_{ij}(\mathit{lo}) > l_j$, whose earliest feasible departure already arrives past $l_j$, are excluded. The relation is then symmetrized and truncated to the 50 nearest others per customer.

\paragraph{Granular justifications and the staleness scheme} Each inter-route operator of the descent carries its own granular justification: a relocation requires the donor to be a neighbor of an insertion-seam customer, a swap requires the two exchanged customers to be neighbors, and a 2-opt* requires one created customer-customer arc to join neighbors. Depot reconnections do not justify a move on their own. Enumeration is restricted a second time over the run, since a staleness scheme re-enumerates a customer only if its own context or a neighbor's has changed since its last clean pass.

\paragraph{Driver constants and the work-unit rate} At each kick a target between 1 and 25 removals is drawn, and the seed customers are then visited in random order. Acceptance runs over a ring of 300 past values, with both enhancements of section 4.2 of \cite{burkeLateAcceptanceHillClimbing2017}: the candidate is accepted when it improves either the late value or the current one, and a history slot is rewritten only when the current solution improves on it. The exhaustive-list polishing descent runs on every new global best, and by default at two further points: on the output of the initial granular descent and inside the fleet-descent basin. The stagnation window is measured in work units because they are nearly rate-invariant across sizes. They spread by a factor 1.5 from $n = 500$ to $n = 2000$, where iteration velocity spreads by a factor 18. Determinism rests on one generator driving kick sizes and customer orders while the descent itself is generator-free.

\subsection{Fleet descent and the penalty-tolerant squeeze}\label{app:solver-fleet}

\emph{Fleet descent} is an ejection-ladder route elimination in the lineage of route-minimization heuristics for the VRPTW \cite{nagataPowerfulRouteMinimization2009}, run on the incumbent at every restart-to-best trigger and, by default, on a work-based period. One attempt selects a victim route uniformly, moves its customers into a last-in-first-out ejection pool, erases the route and drains the pool through a two-rung ladder. Rung one attempts a feasible best insertion, tree-ranked and fold-committed, with no singleton fallback, since reopening a route is exactly the failure mode the phase exists to remove. On failure the customer's difficulty counter is incremented and rung two attempts insertion with ejection: over receiving routes holding a granular neighbor of the customer and contiguous windows of at most two customers, one splice evaluation prices replacing the window by the customer. The feasible window minimizing the sum of the ejected customers' difficulty counters is then committed, and its ejected customers return to the pool. The phase is all-or-nothing: any dead end, exhausted budget or deadline restores the pre-attempt solution exactly, leaving the search state bitwise unperturbed. Since no step opens a route, success means exactly one route fewer with every customer served, and the result then receives a granular descent plus an exhaustive polish. All budgets are counted in work units rather than wall-clock seconds: an earlier wall-clock cap was the one machine-dependent decision left in the solver. It made the same seed on the same instance take different trajectories on different machines.

The evaluated release adds a bounded \emph{penalty-tolerant squeeze}, opt-in and inert by default. It relaxes the time-window channel only, clamping late arrivals at the deadline and accumulating the violation in a separate non-negative channel. A penalized descent on $\Psi_{\mathbf{r}} = \tilde{\Delta}_{\mathbf{r}} + \eta\, W_{\mathbf{r}}$, summed over the routes of the copy, then runs on a copy of the state, where $\tilde{\Delta}_{\mathbf{r}}$ is the duration of route $\mathbf{r}$ under the warp-tolerant evaluation (a single number, the tilde distinguishing it from the exact $\Delta_{\mathbf{r}}^{*}$), $W_{\mathbf{r}} \geq 0$ its accumulated warp and $\eta > 0$ the warp penalty weight, either as a post-drain polish or as an in-ladder rescue for a customer with no feasible position. Two properties keep it compatible with section \ref{subsec:kayros-arch}. Every state returning to exactly zero warp with a strictly better objective is banked, and a dominating repair leg drives residual warp to zero in the budget tail. Only a banked state is ever returned, so warp never crosses the phase boundary. And a phase that banks nothing is a strict no-op, so a successful route drop is never converted into a failure. The inert default is a measured decision: paired against the default at $n = 1000$, the squeeze reached reference fleet sizes the default never reached, but at $n = 500$ it failed its pre-committed acceptance gate on the paired mean cost. Section \ref{subsubsec:fleet-eval} gives both readings.

The dedicated phase replaced a cheaper device, whose limit was measured rather than assumed. That mechanism dissolves one smallest route whole inside the kick so that its customers repair into the others, the additive credit alone being unable to cross the plateau of emptying a route. Instrumentation exposed its limit: 99.0\% of dissolved kicks reopen a singleton route through the repair's own fallback.

\subsection{The exact component: additions and the four-run gate}\label{app:solver-exact}

Five additions produce the component this paper evaluates. First, an open LP backend: the master problems are solved by HiGHS \cite{huangfuParallelizingDualRevised2018} behind a solver-agnostic formulation layer, built statically into the distributed wheels so the exact mode works from a plain installation with no proprietary dependency. The faster commercial backend remains a source-build option. Second, anytime compliance: one absolute deadline is derived per run and every component takes its residual budget from it, including strong-branching probes and the freeze heuristic, which previously ran unbudgeted. New interruption points were added inside cut separation, label merging and solution-pool repricing. Third, warm starts through columns: caller-supplied routes, typically the anytime incumbent, are repriced under the master's own arithmetic and added as initial columns, their total becoming the initial upper bound when they partition the customers. Fourth, checker-consistent binary64 column costs: every route entering the master is repriced by the fold of section \ref{subsec:kayros-folding}, so its objective coefficient and the reported value follow the canonical checker. LP bounds still depend on the backend's stated numerical tolerances. The final value is re-summed in canonical route order. The vendored labeling keeps its own arithmetic for reduced costs, which is safe because a priced column must clear a threshold several orders of magnitude above the difference between the two arithmetics. A priced column the checker rejects is skipped and counted, and its count poisons the run. Fifth, the exact value-jump path used for production pricing on stepwise instances: verticals travel through the piecewise-linear machinery as tagged first-class objects instead of being smoothed into steep bridges. The tag distinguishes a genuine travel-time jump from the inverse of a flat interval, which contains several equally valid departure choices. The double-tie rule of algorithm \ref{algo:compose_NDLCF} formalizes that distinction. Arithmetic designed to be safe for the first meaning is wrong for the second. Selecting between the two by inspecting intermediate operands, where the distinction is no longer recoverable, produced a class of over-certification. The arithmetic is now selected once from the instance's travel-time model. Two verdicts complete the picture. A run reaching its deadline without completing the proof conditions returns open, with the best open node's bound as a solver lower bound whenever the root relaxation finished, valid under the standard LP and pricing tolerances of the certificate statement rather than as a rigorous bound. A run stopped by a resident-set self-guard returns a resource limit. Neither is converted into a weaker form of certificate, and neither can masquerade as node optimality. A truncated pricing pass or strong-branching probe stamps its status, so that \enquote{no new columns} is never read as optimality.

Accordingly, no certificate is issued from a single run. The publication gate is a four-run protocol: each instance is solved four times in separate processes, crossing cold and warm starts with the two bidirectional labeling modes. A certificate requires all four runs to reach proven optimality at one agreeing checker-evaluated binary64 value not above the stored reference, every certifying run to show at least one genuine exact-pricing iteration in an audited pricing census, and every certifying run to return routes the checker actually repriced. A run that skipped a checker-infeasible priced column cannot certify by itself, the skipped column possibly having dominated a feasible one away. A clean-run subset rule, detailed in the thesis, can still certify the instance on its unaffected runs. Agreement is tested at an absolute tolerance, which has refused certificates whose arms disagreed at a relative scale that looks like LP dust. Refusal is a first-class outcome alongside open and resource-limited.

One point about the bridge completes the description: the vendored component is fed in memory, with no instance file on disk. A loaded instance and its ATFs are converted to the travel-time pieces the vendored preprocessing expects, breakpoints emitted verbatim, so that duplicate abscissae become genuine zero-width vertical pieces.

\subsection{Fleet descent and squeeze: the full readings}\label{app:solver-fleet-eval}

Section \ref{subsec:kayros-fleet} promised the full reading of the fleet-aware machinery, including where it fails. Before the fleet-descent phase existed, no run out of 586 matched the reference fleet size on any \textit{Blauth2024} city at $n = 500$, while our summed route durations were already 0.952 to 0.990 of the reference at our own larger route count. That located the whole gap in the fleet rather than in the routing. At $n = 500$ the phase settles that: in an 80-cell paired leg it wins the route count 23 times against 0, taking the summed fleet over the panel from 496 to 471 routes. It also produced the first solutions to improve on the published references of \citeauthor{blauthVehicleRoutingTimedependent2024} \cite{blauthVehicleRoutingTimedependent2024}. At $n \geq 1000$ it has a ceiling and we state it as such. Across 3\,000 cells the route count lands one above the reference on 58.0\% of runs, two above on 38.2\% and three above on 3.7\%. It reaches the reference count on 0.1\% of runs, that is 2 cells, both on the same city. A dedicated lane of 12 cells at 36 hours each returned no reduction at all, and 2\,460 short lottery draws produced no fleet hit whatsoever. Budget rather than seed volume is therefore the currency at this size. One city holds its route count at every setting tested, with the phase on or off, at any trigger pressure, ladder depth or budget. Whether the missing route is unreachable by this mechanism or infeasible under these travel times remains open. At a matched route count the residual duration gap is only 0.3\% to 1.0\%, which confirms that what remains at $n \geq 1000$ is a fleet problem and not a routing problem. After a late drop the search is effectively frozen at the new count.

The penalty-tolerant squeeze was evaluated the same way, armed against the default from bitwise-identical starts. At $n = 1000$ on 12-hour cells it reached the reference fleet on 4 of 30 pairs where the default reached it on 0 of 30. It removed a net 12 routes over the 30 pairs and improved the paired mean objective by 1.32\%. Its pre-committed acceptance gate nevertheless failed on 2-hour cells at $n = 500$, where the armed arm was duration-neutral at a matched fleet but cost 0.277\% on the paired mean against a 0.05\% tolerance. The loss came entirely from route-count endgame variance, and one long pair ended with the armed run stalled one route above its twin. Since there is no size at which arming is unconditionally safe, the released default is inert and campaigns arm it explicitly, which is the honest resolution of a gate that half passed. The negative-result option of section \ref{subsec:kayros-fleet} was used to probe fleet tightness: 90 searches confined to one vehicle below the published record ended without ever attaining the reduced fleet, 90 times out of 90, at a 12-hour single-core budget. That is evidence that the record route counts are tight at that budget, and nothing stronger. The option's contract is what makes the negative reportable, since a run that never attains its cap publishes nothing at all rather than an above-cap solution.

\section{Protocol and Metric Reference}\label{app:protocol}

This appendix is the reference statement of the parts of the protocol of section \ref{subsec:benchmarks} and of the metric design of section \ref{subsec:metrics} that the running text compresses.

\subsection{The canonical preprocessing of Dabia2013}\label{app:protocol-dabia}

A subtle but essential point is that \textit{Dabia2013} is not fully specified by ``Solomon instances plus the speed profiles of \citeauthor{ichouaVehicleDispatchingTimedependent2003}'': the actual instances are the output of a preprocessing pipeline that was never published as data, but hard-coded in the original Java solver of \citeauthor{dabiaBranchPriceTimeDependent2013}, which the authors kindly provided to us. The pipeline first scales every Solomon quantity by a factor $10$. Since coordinates, TW bounds and service times are integers in the original files, this scaling is exact. The planning horizon becomes $T = 10\, l_o^{\text{Sol}}$, where $l_o^{\text{Sol}}$ is the depot due date of the Solomon file. The distance of every arc $\langle i, j \rangle \in \mathcal{A}$ is then computed from the original coordinates $(x_i, y_i)$ and \emph{floor-truncated} to an integer, i.e. an \texttt{(int)} cast in the original Java code:
\[
d_{ij} = \left\lfloor 10 \sqrt{(x_i - x_j)^2 + (y_i - y_j)^2} \right\rfloor.
\]
Each arc is assigned one of three speed categories (slow, normal, fast) by a fixed category matrix, shared by all instances and taken from \cite{ichouaVehicleDispatchingTimedependent2003}. The corresponding stepwise speed profile takes five constant values over the horizon subdivision $\{0, 0.2T, 0.3T, 0.7T, 0.8T, T\}$: $(90, 60, 100, 70, 80)/60$ on fast arcs, $(70, 40, 80, 50, 60)/60$ on normal arcs and $(60, 20, 40, 30, 50)/60$ on slow arcs. Finally, the IGP algorithm \cite{ichouaVehicleDispatchingTimedependent2003} integrates each speed profile until the distance $d_{ij}$ is covered, yielding the piecewise linear, FIFO-compliant TTFs $\tau_{ij}$ of section \ref{sec:math}. In-depth checking showed that the resulting instances are identical to those distributed by \citeauthor{lera-romeroLinearEdgeCosts2020} \cite{lera-romeroLinearEdgeCosts2020}, which are also used in the \citeauthor{panHybridAlgorithmTimedependent2021} \cite{panHybridAlgorithmTimedependent2021} solver.

The floor truncation deserves emphasis: it is invisible in the papers, yet it is part of the very definition of the benchmark. As already observed for the classic VRPTW, the DIMACS 2021 convention (scaled, truncated distances) \cite{DIMACS2021} coexists with the historical SINTEF one (full-precision floating-point distances) \cite{Sintef2008}. Scaling and rounding choices are therefore not innocuous implementation details, because they change the feasible region of every instance \cite{rascoussierImpactScalingRounding2026}. Since $\lfloor u \rfloor \leq u$, the truncated distances make every arc slightly shorter than its full-precision counterpart (mean deficit $\approx 0.40$ time units per arc over the \textit{Dabia2013} arcs). The canonical instances are therefore an \emph{outer} (``optimistic'') approximation of their full-precision reading: feasibility transfers from the full-precision instances to the canonical ones, but not conversely. The distinction is far from academic for the \textit{Duration} objective, whose optimal solutions systematically ride TW deadlines: the re-pricing of the 146 published BKS of \cite{lera-romeroLinearEdgeCosts2020} (appendix \ref{app:bks}) shows that 11 of them, while exactly correct on the canonical data, lose feasibility when the very same routes are re-evaluated with full-precision distances.

\subsection{The stratified draw of the frozen instance set}\label{app:protocol-draw}

Of the 10 panels of section \ref{subsec:benchmarks}, two are taken whole: \textit{Dabia2013} at $n = 100$ (all 56 instances) and \textit{Poryos2026} at $n = 1000$ (all 60). These are the two canonical panels of figure \ref{fig:canonical-curves}. The other 8 are stratified samples of 12 instances each, drawn from pools of 20 to 96 candidates by a deterministic generator that is a pure function of the enumerated pool and one seed, fixed at 14. Selection is solver blind: instances are ranked by a SHA-256 key computed from their identity in a campaign-specific namespace, never from any solver result. The seeded ordering only controls how a stratified round-robin visits the strata. The strata are the ones each family exposes: the fine Solomon classes for \textit{Dabia2013}, the customer profile and congestion depth for \textit{Vu2020}, the generation series and class for \textit{Lera2026}, the city with a coverage floor on every (traffic model, intensity) pair for \textit{Poryos2026}, and a single group for \textit{Rifki2020}, whose Lyon network offers no diversity axis. One panel required two draw rounds before its coverage floor was met.

\subsection{Load fractions}\label{app:protocol-load}

Because a Java arm and a Python-bound arm pay a start-up cost that a native arm does not, we measured what that cost is worth as a fraction of the budget rather than assuming it away. The measured \emph{load fraction} is the mean recorded loading time of a run, from the start of the run clock to the moment the solver takes over, expressed as a share of $\tlim$.\footnote{The arms bound that measurement slightly differently: the native Hexaly arms count from the entry of their binary, so the wrapper handoff and the license-seat wait (about one second in total on the worst panel) sit inside $\tlim$ but outside the recorded loading time. The residual differences are below $0.05$ percentage points of the budget.} The largest panel-mean load fraction across arms is 1.62\% of the budget (58.4\,s) for \kayros{} on the \textit{Poryos2026} $n = 1000$ panel. The final C++ Hexaly arms have panel means near 0.8\% there. The longest recorded individual load on that panel is 127.9\,s (3.552\% of the budget), so 1.62\% is a panel mean rather than a per-run bound. That is why no arm is credited a warm-up allowance.

\subsection{The metric design, developed}\label{app:protocol-metrics}

\paragraph{Why the gap is bounded and signed} The bounded form is what makes an anytime score well defined without arbitrary truncation: an arm that has not yet produced any feasible solution has an unbounded gap, and a raw relative gap would make the aggregate depend entirely on how that state is capped. Here it is not a cap but the natural limit, and the pre-first-incumbent state is scored $\sgap = 1$ by convention. The signed construction is equally deliberate: $\sgap$ and the score built on it remain well defined against a frozen reference set that a solver can beat, where gaps go negative. They are just as well defined in the setting of this campaign, where every campaign improvement was folded into the reference before scoring, so that no gap is negative, $\sgap \geq 0$ pointwise, and a run matching the latest record ends at exactly zero. Both $\sgap$ and the score $\ascore$ built on it take values in $[-1, 1]$ in general and in $[0, 1]$ on this campaign, and strictly positive costs keep the lower endpoint unattained even without the fold.

\paragraph{What the score charges} A negative value would mean a run beating the reference over a substantial part of the horizon, and nothing clips it away: the bounded gap makes clipping unnecessary, and its absence is what keeps the statistic signed. Timestamps come from a monotonic clock with no artificial quantization, are clamped to the horizon, and coalesce at equal times by keeping the better cost.

\paragraph{The final gap and its weakness} Its weakness is the mirror of its familiarity: a run that ends the budget without any incumbent has no finite final gap, and the campaign meets exactly that case (table \ref{tab:pooled}). That is the boundedness argument for $\sgap$ made concrete.

\paragraph{Two mechanical guards} The confirmatory design of section \ref{subsec:metrics} also carries two guards that never had to fire. A panel sharing fewer than two instances between the arms of a contrast would yield no $p$-value and could never be reported as significant. No panel was in that state, and any instance-set mismatch between two arms of a contrast is a hard error rather than a silent skip.

\section{Component Studies in Full}\label{app:components}

This appendix carries the full readings of two component studies of section \ref{subsec:components}: the search-strategy comparison that fixed the iterated local search as the default, and the thread-scaling ladder of the commercial contender.

\subsection{Search strategy head-to-head}\label{app:components-ils}

The anytime layer of section \ref{subsec:kayros-anytime} is an iterated local search, and that is a measured choice. An earlier line of the solver used an ant-colony construction over the same local search. The two were compared head to head, together with a hybrid that runs the colony for the first half of the budget and warm-starts the iterated local search from its incumbent. The three arms shared the code base, the local search, the per-run time limit and the seed, and differed only in the strategy parameter. The comparison covers 20\,808 runs over both problem variants, five instance families and sizes from $n = 10$ to $n = 1000$, on three seeds, with the time limit scaled by instance size. On the 6\,936 paired cells, the iterated local search is strictly better than the colony on 5\,714, tied on 917 and worse on 305, for a mean improvement of 2.52\% in final cost. The advantage is monotone in size within every family except \textit{Vu2020}, where it does not grow with size. The advantage runs from a fraction of a percent on the smallest instances, where ties dominate, to 5.44\% at $n = 1000$, where the colony loses 575 of 576 cells. The only cells where the colony has the edge are the two smallest \textit{Rifki2020} sizes, by about a fifth of a percent. The hybrid ties the pure iterated local search on the small families and loses at scale, by 0.56\% at $n = 200$ growing to 1.17\% at $n = 1000$. At this fixed half-budget split, the warm start did not improve final quality, while the colony iterations displaced iterated-local-search time at larger sizes. Other allocations and switching policies were not tested. A later campaign of 4\,160 runs gives the anytime reading of the same verdict: on the road-network family the iterated local search wins every paired cell from $n = 100$ upward. At $n = 1000$ all of its runs end below a 1\% gap, while the colony arm gets 11\% of its runs there and plateaus above 1.5\%. Two caveats keep this honest. The colony arm's best run beat the other's on four small, heavily congested instances, two of which survived as record improvements, so the colony is not dominated everywhere and remains available as an option. And this evidence is historical with respect to section \ref{subsec:contenders}: it fixed the default strategy at release 0.4.0, long before the 1.6.0 build evaluated here, and it is reported as that decision record rather than as a same-version ablation on the 212-instance set.

\subsection{Thread scaling of the commercial contender}\label{app:components-threads}

The thread-scaling study of section \ref{subsubsec:threads} is reported here in full. Its first published version carries the Python-binding measurement, and the numbers quoted here are those of its revision on the C++ binding (arXiv:2608.10079, version 2) \cite{rascoussierHexalyThreadScaling2026}, whose raw runs are deposited on Zenodo \cite{rascoussierHexalyThreadScalingData2026}.
Table \ref{tab:hexaly-ladder} summarizes what a reader of section \ref{subsubsec:threads} needs. The study solved 10 instances spanning the five families at one hour per run, on 10 seeds, at 1, 2, 4, 8 and 16 requested threads, in both model encodings, each process pinned to exactly its requested number of physical cores, for 500 runs per ladder. Both encodings largely occupy what they ask for. Measured CPU time over wall time tracks the requested width to within 5\% at every level for the sliced encoding, the largest deviation being a 4.5\% shortfall at 4 threads. The external-function encoding also tracks it to within 5\%, except at 2 threads, where 11 of the 100 cells fall short and take the level mean to 1.85 rather than 2. Those 11 cells were re-run alone on one host and every one of them returned to 2.0, so that shortfall is an artifact of co-tenancy in the packed wave rather than a property of the encoding. The ladder therefore answers what each encoding does with its cores rather than whether it takes them. The answers differ. The external-function encoding does not convert width into quality: its mean final gap $g_\tlim$ is 5.87\% at 1 thread, passes through a shallow optimum of 5.59\% at 8 threads and returns to 5.98\% at 16. The dispersion follows the same shape, the standard deviation across seeds falling from 0.908 to 0.648 percentage points up to 8 threads before widening again. Behind that flat pooled figure, 9 of the 10 instances do improve with width, by 17.8\% on their panel-equal mean between 1 and 16 threads. A single \textit{Lera2026} instance of 1000 customers cancels all of it by degrading 25.3\% over the same range. The time-sliced encoding behaves as one would hope: its mean final gap falls by 23.5\% from 1 to 16 threads and its anytime score by 25.4\%, with no saturation at the widest level. The seed standard deviation narrows from 1.31 to 0.70 percentage points. The two encodings therefore cross. At 1 thread they are within 0.1 percentage points, 5.87\% for the external function against 5.96\% for the sliced one. The whole of that edge comes from the same 1000-customer instance, the sliced encoding being ahead on 6 of the 10 instances at that width. By 16 threads the sliced one leads on all 10, 4.56\% against 5.98\%. What slicing buys is thread scaling, not fidelity traded for throughput. At the single-core budget of this campaign the two encodings are close to a tie on this panel, while the full 212-instance comparison of section \ref{subsec:contenders} puts the exact encoding clearly ahead. None of this changes the convention. It prices a separate trade-off: better and more stable solutions at equal wall time in exchange for up to 16 times the nominal core allocation. The gaps in table \ref{tab:hexaly-ladder} are measured against a pre-campaign reference and are not comparable with the scores of section \ref{subsec:contenders}. They are also stated on the raw scale $g_\tlim$, whereas the companion report tabulates the squeezed gap, so only the anytime-score columns of the two documents coincide digit for digit.

\begin{table}[htbp]
\centering
\small
\caption{Hexaly thread scaling on a 10-instance panel at one hour per run and 10 seeds per cell, in the two model encodings, for 500 runs per ladder. Both occupy the cores they request, so the comparison is one of what each encoding does with them: at one thread the two encodings are within 0.1 percentage points of each other, and only the sliced one converts additional threads into quality, the external-function one passing through a shallow optimum at 8 threads and ending no better at 16 than at 1. The dispersion column is the sample standard deviation across the 10 seeds, averaged over the 10 instances. Gaps are measured against a pre-campaign reference and are not comparable with the campaign scores.}
\label{tab:hexaly-ladder}
\begin{tabular}{lrrrrr}
\toprule
Encoding & Threads & Mean $g_{\tlim}$ (\%) & Mean $\ascore$ & Seed std (pp) & CPU / wall \\
\midrule
External function & 1 & 5.87 & 0.03891 & 0.908 & 1.00 \\
 & 2 & 5.95 & 0.03908 & 0.779 & 1.85 \\
 & 4 & 5.63 & 0.03716 & 0.710 & 3.84 \\
 & 8 & 5.59 & 0.03678 & 0.648 & 7.77 \\
 & 16 & 5.98 & 0.03862 & 0.732 & 15.75 \\
\midrule
Time sliced, 96 slices & 1 & 5.96 & 0.04621 & 1.31 & 1.00 \\
 & 2 & 5.67 & 0.04411 & 0.869 & 1.99 \\
 & 4 & 5.13 & 0.04129 & 0.740 & 3.82 \\
 & 8 & 5.00 & 0.03732 & 0.806 & 7.68 \\
 & 16 & 4.56 & 0.03448 & 0.699 & 15.69 \\
\bottomrule
\end{tabular}
\end{table}

\end{document}